\documentclass[Unicode]{cedram-alco}


\usepackage{amsrefs}
\usepackage{tabularray}
\usepackage{subcaption}
\usepackage[indent]{parskip}
\usepackage{xcolor}
\usepackage{mathtools, amssymb}
\usepackage{tikz}
\usetikzlibrary{positioning,fit}
\usepackage{ytableau}
\usepackage{appendix}
\usepackage{enumitem}
\usepackage{bm}
\usepackage{setspace}
\usepackage{fullpage}
\usepackage[T1]{fontenc}
\usepackage{mlmodern}

\newcommand{\ssA}{\mathsf{A}}
\newcommand{\ssD}{\mathsf{D}}
\newcommand{\ssE}{\mathsf{E}}
\newcommand{\C}{\mathbb{C}}
\newcommand{\scrO}{\mathcal{O}}
\newcommand{\R}{\mathbb{R}}
\DeclareMathOperator{\GL}{GL}
\DeclareMathOperator{\U}{U}
\DeclareMathOperator{\SL}{SL}
\newcommand{\gl}{\mathfrak{gl}}
\DeclareMathOperator{\Sp}{Sp}
\renewcommand{\sp}{\mathfrak{sp}}

\renewcommand{\O}{\operatorname{O}}
\DeclareMathOperator{\SO}{SO}
\newcommand{\so}{\mathfrak{so}}
\newcommand{\g}{\mathfrak{g}}
\renewcommand{\k}{\mathfrak{k}}
\newcommand{\h}{\mathfrak{h}}
\renewcommand{\q}{\mathfrak{q}}
\newcommand{\p}{\mathfrak{p}}
\newcommand{\F}{\mathbf{F}}
\newcommand{\E}{\mathcal{E}}
\renewcommand{\phi}{\varphi}
\DeclareMathOperator{\M}{M}
\DeclareMathOperator{\SM}{SM}
\DeclareMathOperator{\AM}{AM}
\DeclareMathOperator{\SSYT}{SSYT}
\newcommand{\la}{\lambda}
\newcommand{\cor}{{\rm cor}}
\newcommand{\Wedge}{\raisebox{1pt}{\scalebox{.8}{$\bigwedge\!$}}}

\DeclareFontFamily{U}{mathx}{}
\DeclareFontShape{U}{mathx}{m}{n}{<-> mathx10}{}
\DeclareSymbolFont{mathx}{U}{mathx}{m}{n}
\DeclareMathAccent{\widecheck}{0}{mathx}{"71}

\title
[Stanley decompositions of modules of covariants]
{Stanley decompositions of \\ modules of covariants}

\author{\firstname{William} \middlename{Q.} \lastname{Erickson}}
\address{Monmouth College\\ 
Dept. of Mathematics, Statistics, and Computer Science\\
700 E. Broadway\\
Monmouth, IL 61462 (USA)}
\email{william.q.erickson@gmail.com}

\author{\firstname{Markus}  \lastname{Hunziker}}
\address{Baylor University\\
 Dept. of Mathematics\\
 One Bear Place \#97328\\
Waco, TX 76798 (USA)}
\email{Markus\_Hunziker@baylor.edu}

\keywords{Modules of covariants, Stanley decompositions, classical invariant theory, lattice paths, Hilbert series, Howe duality, Stanley--Reisner rings, Harish-Chandra modules}
  
\subjclass{05E10, 13A50, 22E47, 17B10}

\begin{document}

\begin{abstract}
Let $H$ be a complex reductive group, with finite-dimensional representations $W$ and $U$.
The \emph{module of covariants} for $W$ of type $U$ is the space of all $H$-equivariant polynomial maps $\phi: W \longrightarrow U$.
In this paper, we take $H$ to be one of the classical groups $\GL(V)$, $\O(V)$, or $\Sp(V)$, where $W$ is a direct sum of copies of $V$ and $V^*$, and $U$ is an arbitrary rational representation (with $U$ restricted to exterior powers of $V$ in the $\O(V)$ case).
Our main result gives uniform Stanley decompositions of these modules of covariants, with Stanley spaces parametrized by combinatorial objects we call \emph{jellyfish}.
As a corollary, we write down the Hilbert series as a finite sum of rational functions, each with a combinatorial interpretation in terms of lattice paths.
Notably, these results do not rely on the module being Cohen--Macaulay.
We further apply our methods to invariant rings for $\SL(V)$ and $\SO(V)$.
Our proofs rely on previous work by Jackson on standard monomial theory for dual reductive pairs, since classical modules of covariants can be viewed via Howe duality as Harish-Chandra modules of unitary highest weight representations of a certain real reductive group.
As a first step toward extending this program to arbitrary unitary highest weight representations (including those of the exceptional groups), we establish analogous results uniformly for the Wallach representations of type ADE.
\end{abstract}

\maketitle

\theoremstyle{plain} 

\theoremstyle{definition}

\theoremstyle{remark}

\section{Introduction}

\subsection*{Modules of covariants}
\label{sub:MOCs}

Modules of covariants are generalizations of invariant rings, and (under different terminology) played a key role in 19th-century classical invariant theory.
Let $H$ be a complex reductive group, and let $W$ and $U$ be finite-dimensional representations of $H$.
The \emph{module of covariants} for $W$ of type $U$ is the space of all $H$-equivariant polynomial functions $\phi: W \longrightarrow U$.
It is a module over the ring of invariants $\C[W]^H$ via multiplication of functions; in fact, $\C[W]^H$ is itself the module of covariants of trivial type.
In this paper, for the most part, we consider the case where $H$ is a classical group $\GL(V)$, $\O(V)$, or $\Sp(V)$, and $W$ is a direct sum of arbitrarily many copies of $V$ and $V^*$, and $U$ is an arbitrary rational representation of $H$.
This setting is the framework for Weyl's fundamental theorems of classical invariant theory~\cite{Weyl}.

In the 1970s, Schwarz~\cite{Schwarz} classified \emph{cofree} representations of simple algebraic groups, where every module of covariants is free, and hence has the \emph{Cohen--Macaulay} property. 
Roughly speaking, being Cohen--Macaulay is tantamount to having a structure that is combinatorially ``nice.''
For example, suppose that $M$ is a finitely generated graded module over a complex polynomial ring $S$.
If $M$ is Cohen--Macaulay, then there exists a system of homogeneous, algebraically independent elements $\theta_1, \ldots, \theta_\ell \in S$ such that $M$ is a graded free module over $\C[\theta_1, \ldots, \theta_\ell]$, and admits a \emph{Hironaka decomposition}
\begin{equation}
    \label{Hironaka}
    M = \bigoplus_{i \in I} \C[\theta_1, \ldots, \theta_\ell] \: \eta_i,
\end{equation}
where $I$ is a finite set indexing certain homogeneous elements $\eta_i \in M$.
A Hironaka decomposition~\eqref{Hironaka} immediately yields the Hilbert--Poincar\'e series of $M$ in the form
\[
P(M; t) = \frac{\sum_{i \in I} t^{\deg \eta_i}}{\prod_{j=1}^\ell (1-t^{\deg \theta_j})}.
\]
There was a flurry of interest in modules of covariants during the 1990s, with papers by  Brion~\cite{Brion}, Broer~\cite{Broer}, and Van den Bergh~\cites{Vandenberg91,VandenBergh} addressing the question of when modules of covariants have the Cohen--Macaulay property in the non-cofree case.
Currently, modules of covariants play a central role in the rapidly growing study of \emph{equivariant machine learning}; see the final paragraph of the introduction for more context connected to our main result.

\subsection*{Stanley decompositions}

Our original goal was to write down Hironaka decompositions for the classical modules of covariants that are Cohen--Macaulay.
The Cohen--Macaulay property is extremely rare among modules of covariants; 
this follows from the work of Brion~\cite{Brion}, and more explicitly from~\cite{Alexander}*{Ch.~4}.
Gradually we realized that we could obtain equally nice decompositions, and in complete generality, by allowing the $\theta$'s in~\eqref{Hironaka} to vary with each component.
This led us to look instead for \emph{Stanley decompositions}, which take the form
\begin{equation}
    \label{Stanley decomp general}
    M = \bigoplus_{i \in I} \C[\theta_{i,1}, \ldots, \theta_{i,{\ell_i}}] \: \eta_i,
\end{equation}
where, for each $i \in I$, the homogeneous elements $\theta_{i,1}, \ldots, \theta_{i,\ell_i} \in S$ are algebraically independent, and $\eta_i \in M$ is homogeneous.
(Stanley decompositions are so called because they first appeared in work by Stanley~\cite{Stanley82}*{Thm.~5.2} on commutative monoids.)
Each summand in~\eqref{Stanley decomp general} is called the \emph{Stanley space} corresponding to $i \in I$.
With respect to writing down the Hilbert--Poincar\'e series, a Stanley decomposition is effectively just as elegant as a Hironaka decomposition, since~\eqref{Stanley decomp general} yields
\begin{equation}
    \label{Hilbert series from Stanley decomp general}
    P(M;t) = \sum_{i \in I} \frac{t^{\deg \eta_i}}{\prod_{j=1}^{\ell_i} (1-t^{\deg \theta_{i,j}})}.
\end{equation}

In the special case of the invariant rings $\C[W]^H$, there are well-known Stanley decompositions, with Stanley spaces parametrized by families of nonintersecting lattice paths.
These were obtained in the 1990s (see~\cites{Sturmfels,Herzog,Conca94,GhorpadeKrattenthalerPfaffians}) in the context of determinantal rings (which are isomorphic to the invariant rings, by Weyl's fundamental theorems).
Since modules of covariants generalize invariant rings, our task was to find a generalization of lattice path families which could parametrize Stanley decompositions for all modules of covariants in a uniform manner.
(Note that the problem is not one of existence, but rather of combinatorial tractability; indeed, it was shown in~\cite{HVZ}*{Lemma~1.1} that if $S$ is a polynomial ring in $n$ variables over a field, then any finitely generated $\mathbb{Z}^n$-graded $S$-module admits a Stanley decomposition.)

\subsection*{Main result: a nontechnical overview}

The main result in this paper is a uniform combinatorial description of Stanley decompositions and Hilbert--Poincar\'e series for the classical modules of covariants, regardless of whether the modules are Cohen--Macaulay.
We introduce objects called \emph{jellyfish} that parametrize the Stanley spaces in our decompositions.
In particular, let
\[
W \coloneqq 
\begin{cases}
    V^{*p} \oplus V^q, & H = \GL(V),\\
    V^n, & H = \O(V) \text{ or } \Sp(V).
\end{cases}
\]
Recall that the irreducible rational representations $U_\sigma$ of the classical groups are labeled by certain weakly decreasing integer tuples $\sigma$.
Our object of interest is therefore 
\[
M_\sigma \coloneqq \Big\{ \text{$H$-equivariant polynomial functions } \phi: W \longrightarrow U_\sigma \Big\},
\] 
that is, the module of covariants of type $U_\sigma$.
Our main result is the following theorem, with notation to be explained immediately below.
(The statement of the theorem given below applies in the ``interesting'' range, that is, where the rank of $H$ is small enough that $W$ is not cofree.
Nonetheless, in the actual statement of Theorem~\ref{thm:Stanley decomps and HS}, we will also include the much simpler Stanley decomposition that arises in the range where all $M_\sigma$'s are free.)

\begin{thm*}[See the detailed statement in Theorem~\ref{thm:Stanley decomps and HS}]
    Let $H = \GL(V)$, $\O(V)$, or $\Sp(V)$.
    Let $U_\sigma$ be an irreducible rational representation of $H$; if $H = \O(V)$, then assume $U_\sigma$ is an exterior power of $V$.
    We have a Stanley decomposition
    \begin{equation}
    \label{main decomp intro}
    M_\sigma = \bigoplus_{(\F, T) \in \mathcal{J}(\sigma)} \C[ f_{ij} : (i,j) \in \F] \: f_{\cor(\F)} \cdot \phi_T,
    \end{equation}
    where $\mathcal{J}(\sigma)$ is the set of jellyfish of shape $\sigma$ \textup{(}see Definition~\ref{def:jellyfish}\textup{)}.
\end{thm*}

We illustrate the notation in~\eqref{main decomp intro} by means of an example.
Let $H = \Sp(V)$, where $\dim V = 2k$, and let $W = V^n$; in particular, we take $k=3$ and $n=8$, with $\sigma = (5,4,2)$.

\begin{itemize}
    \item Each $f_{ij} \in \C[W]^H$ is the quadratic contraction $f_{ij} : (v_1, \ldots, v_n) \mapsto \omega(v_i, v_j)$, where $\omega$ is the nondegenerate skew-symmetric bilinear form preserved by $\Sp(V)$.
    Note that the $f_{ij}$'s therefore range over all pairs $(i,j) \in \mathbf{P} \coloneqq \{(i,j) : 1 \leq i < j \leq n=8 \}$.
    Depicting $\mathbf{P}$ using matrix coordinates, we obtain an upper triangular staircase pattern, as in~\eqref{P and F intro} below.
    
    \item $\F$ is a subset of $\mathbf{P}$ formed by taking all the points lying weakly above the $k$th antidiagonal (\ie the points in the shaded isosceles triangle in~\eqref{P and F intro}), together with the points lying along some family of $k$ lattice paths connecting this antidiagonal to the right-hand edge of $\mathbf{P}$.
    In the example~\eqref{P and F intro}, we show a typical $\F$ consisting of the shaded points.
    (The shaded triangle somewhat resembles the bell of an actual jellyfish, and the lattice paths resemble the beginnings of tentacles.)

    \begin{equation} 
\label{P and F intro}
\tikzstyle{corner}=[rectangle,draw=black,thin, minimum size = 8pt, inner sep=2pt]
\tikzstyle{dot}=[circle,fill=black, minimum size = 4.5pt, inner sep=0pt]
\begin{tikzpicture}[scale=.3,baseline=(current bounding box.center),every node/.style={scale=.7}]
\foreach \x in {2,...,8}{\foreach \y in {\x,...,8}{\node [dot] at (10-\x,\y) {};}}
\node at (0,8) {$1$};
\node at (0,7) {$2$};
\node at (0,6) {$3$};
\node at (0,5) {$4$};
\node at (0,4) {$5$};
\node at (0,3) {$6$};
\node at (0,2) {$7$};
\node at (0,1) {$8$};
\node at (1,9) {$1$};
\node at (2,9) {$2$};
\node at (3,9) {$3$};
\node at (4,9) {$4$};
\node at (5,9) {$5$};
\node at (6,9) {$6$};
\node at (7,9) {$7$};
\node at (8,9) {$8$};
\node[scale=1.3] at (4.5,0) {$\mathbf{P}$};
\draw [densely dotted] (1.5,8.5) -- ++(7,0) -- ++(0,-7) -- ++(-1,0) -- ++(0,1) -- ++(-1,0) -- ++(0,1) -- ++(-1,0) -- ++(0,1) -- ++(-1,0) -- ++(0,1) -- ++(-1,0) -- ++(0,1) -- ++(-1,0) -- ++(0,1) -- ++(-1,0) -- ++(0,1);
\end{tikzpicture}
\hspace{10ex}
\begin{tikzpicture}[scale=.3,baseline=(current bounding box.center),every node/.style={scale=.7}]
\draw [white,fill=lightgray] (1.5,8.5) -- ++ (0,-1) -- ++ (1,0) -- ++ (0,-1) -- ++ (1,0) -- ++ (0,-1) -- ++ (1,0) -- ++ (0,1)  -- ++ (1,0) -- ++ (0,1) -- ++ (1,0) -- ++ (0,1) -- cycle;
\draw[line width=3pt, lightgray] (5,6) -- ++(0,-1) -- ++(2,0) -- ++(0,-1) -- ++(1,0) (6,7) -- ++(1,0) -- ++(0,-1) -- ++(1,0)  (7,8) -- ++(1,0) -- ++(0,-1);
\node at (8,8) [corner] {};
\node at (7,7) [corner] {};
\node at (7,5) [corner] {};
\foreach \x in {2,...,8}{\foreach \y in {\x,...,8}{\node [dot] at (10-\x,\y) {};}}
\node at (0,8) {$1$};
\node at (0,7) {$2$};
\node at (0,6) {$3$};
\node at (0,5) {$4$};
\node at (0,4) {$5$};
\node at (0,3) {$6$};
\node at (0,2) {$7$};
\node at (0,1) {$8$};
\node at (1,9) {$1$};
\node at (2,9) {$2$};
\node at (3,9) {$3$};
\node at (4,9) {$4$};
\node at (5,9) {$5$};
\node at (6,9) {$6$};
\node at (7,9) {$7$};
\node at (8,9) {$8$};
\node[scale=1.3] at (4.5,0) {$\mathbf{F} \subseteq \mathbf{P}$};
\draw [densely dotted] (1.5,8.5) -- ++(7,0) -- ++(0,-7) -- ++(-1,0) -- ++(0,1) -- ++(-1,0) -- ++(0,1) -- ++(-1,0) -- ++(0,1) -- ++(-1,0) -- ++(0,1) -- ++(-1,0) -- ++(0,1) -- ++(-1,0) -- ++(0,1) -- ++(-1,0) -- ++(0,1);
\end{tikzpicture}
\end{equation}
    
    \item $f_{\cor(\F)}$ denotes the product of those $f_{ij}$'s whose corresponding points lie on the \emph{corners} (\ie the east-to-south turns) of the lattice paths determined by $\F$.
    The subset $\F$ shown in~\eqref{P and F intro} has three corners, indicated by squares.

    \item $T$ is a semistandard tableau of shape $\sigma = (5,4,2)$, with maximum entry $n=8$.
    By depicting each row of the tableau $T$ as a path outside $\mathbf{P}$, where the tableau entries determine the row indices, we obtain a collection of $k$ paths attached to the right-hand edge of $\mathbf{P}$:
    \begin{equation}
    \label{T in intro}
    \ytableausetup{smalltableaux,centertableaux}
        T = \ytableaushort{22334,3458,56} \: \text{ is depicted as in~\eqref{example in intro}.  Explicitly:} 
    \hspace{3ex}
\begin{tikzpicture}[scale=.35,baseline=(current bounding box.center),every node/.style={scale=.7}]
\tikzstyle{dot}=[circle,fill=black, minimum size = 4.5pt, inner sep=0pt]
\draw [densely dotted] 
(7.5,8.5) -- ++ (1,0) -- ++(0,-7) -- ++(-1,0);
\draw[ultra thick, lightgray] (8,4) \foreach \x/\y in {1/0,1/-1}{
-- ++(\x,\y)
};
\draw[ultra thick, lightgray] (8,6)  \foreach \x/\y in {1/0,1/-1,1/-1,1/-3}{
-- ++(\x,\y)
};
\draw[ultra thick, lightgray] (8,7) \foreach \x/\y in {1/0,1/0,1/-1,1/0,1/-1}{
-- ++(\x,\y)
};
\foreach \y in {2,...,8}{\node [dot] at (8,\y) {};}

\node[draw=black,fill=white,inner sep=1.5pt] at (9,7) {2};
\node[draw=black,fill=white,inner sep=1.5pt] at (10,7) {2};
\node[draw=black,fill=white,inner sep=1.5pt] at (11,6) {3};
\node[draw=black,fill=white,inner sep=1.5pt] at (12,6) {3};
\node[draw=black,fill=white,inner sep=1.5pt] at (13,5) {4};

\node[draw=black,fill=white,inner sep=1.5pt] at (9,6) {3};
\node[draw=black,fill=white,inner sep=1.5pt] at (10,5) {4};
\node[draw=black,fill=white,inner sep=1.5pt] at (11,4) {5};
\node[draw=black,fill=white,inner sep=1.5pt] at (12,1) {8};

\node[draw=black,fill=white,inner sep=1.5pt] at (9,4) {5};
\node[draw=black,fill=white,inner sep=1.5pt] at (10,3) {6};

\end{tikzpicture}
\end{equation}
(In general, it is nontrivial to determine the points of attachment; see~\eqref{Sp end} for more examples.)
    
    \item $\phi_T : W \longrightarrow U_\sigma$ is an $H$-equivariant polynomial map induced canonically by the tableau $T$, to be defined in~\eqref{phi_T GL Sp}.
    The map $\phi_T$ has degree $|\sigma| \coloneqq \sum_i \sigma_i = 11$.
    
    \item A pair $(\F, T)$ is said to be a \emph{jellyfish of shape $\sigma$} if the lattice paths determined by $\F$ align with the paths determined by $T$.
    The set of jellyfish of shape~$\sigma$ is denoted by~$\mathcal{J}(\sigma)$.
\end{itemize}
In the present example, if $\F$ and $T$ are as given in~\eqref{P and F intro} and~\eqref{T in intro}, respectively, then this particular pair $(\F, T)$ does indeed belong to $\mathcal{J}(\sigma)$; we see this immediately by depicting $\F$ and $T$ together in a single diagram~\eqref{example in intro}.
Specifically, we check that the lattice paths in  $\F$ align with those representing $T$, thus forming ``tentacles.''
On the right-hand side of the diagram in~\eqref{example in intro}, we write down the Stanley space corresponding to this jellyfish $(\F, T)$ in the decomposition~\eqref{main decomp intro}:

\begin{equation}
\label{example in intro}
\tikzstyle{corner}=[rectangle,draw=black,thin, minimum size = 8pt, inner sep=2pt]
\tikzstyle{dot}=[circle,fill=black, minimum size = 4.5pt, inner sep=0pt]
\tikzstyle{endpt}=[circle,fill=lightgray, minimum size = 5pt, inner sep=0pt]
\begin{tikzpicture}[scale=.3,baseline=(current bounding box.center),every node/.style={scale=.7}]
\draw [white,fill=lightgray] (1.5,8.5) -- ++ (0,-1) -- ++ (1,0) -- ++ (0,-1) -- ++ (1,0) -- ++ (0,-1) -- ++ (1,0) -- ++ (0,1)  -- ++ (1,0) -- ++ (0,1) -- ++ (1,0) -- ++ (0,1) -- cycle;
\draw [densely dotted] (1.5,8.5) -- ++(7,0) -- ++(0,-7) -- ++(-1,0) -- ++(0,1) -- ++(-1,0) -- ++(0,1) -- ++(-1,0) -- ++(0,1) -- ++(-1,0) -- ++(0,1) -- ++(-1,0) -- ++(0,1) -- ++(-1,0) -- ++(0,1) -- ++(-1,0) -- ++(0,1);
\draw[ultra thick, lightgray] (5,6) -- ++(0,-1) -- ++(2,0) -- ++(0,-1) -- ++(1,0) \foreach \x/\y in {1/0,1/-1}{
-- ++(\x,\y) node[endpt]{}
};
\draw[ultra thick, lightgray] (6,7) -- ++(1,0) -- ++(0,-1) -- ++(1,0)  \foreach \x/\y in {1/0,1/-1,1/-1,1/-3}{
-- ++(\x,\y) node[endpt]{}
};
\draw[ultra thick, lightgray] (7,8) -- ++(1,0) -- ++(0,-1) \foreach \x/\y in {1/0,1/0,1/-1,1/0,1/-1}{
-- ++(\x,\y) node[endpt]{}
};
\node at (8,8) [corner] {};
\node at (7,7) [corner] {};
\node at (7,5) [corner] {};
\foreach \x in {2,...,8}{\foreach \y in {\x,...,8}{\node [dot] at (10-\x,\y) {};}}
\node at (0,8) {$1$};
\node at (0,7) {$2$};
\node at (0,6) {$3$};
\node at (0,5) {$4$};
\node at (0,4) {$5$};
\node at (0,3) {$6$};
\node at (0,2) {$7$};
\node at (0,1) {$8$};
\node at (1,9) {$1$};
\node at (2,9) {$2$};
\node at (3,9) {$3$};
\node at (4,9) {$4$};
\node at (5,9) {$5$};
\node at (6,9) {$6$};
\node at (7,9) {$7$};
\node at (8,9) {$8$};
\node[scale=1.3,align=center] at (4.5,-1) {a jellyfish \\ $(\mathbf{F}, T) \in \mathcal{J}(\sigma)$};
\end{tikzpicture}
\quad \leadsto \quad \C\!\underbrace{\left[\begin{aligned}&f_{12}, f_{13}, f_{14}, f_{15}, f_{16}, \\
& f_{23}, f_{24}, f_{25}, f_{34}, \\ &f_{17}, f_{18}, f_{28}, \\ 
&f_{26}, f_{27}, f_{37}, f_{38}, \\ &f_{35}, f_{45}, f_{46}, f_{47}, f_{57}, f_{58} \end{aligned}\right]}_{\mathclap{\text{$f_{ij}$'s corresponding to points $(i,j) \in \mathbf{F}$}}} \;  \underbrace{f_{18} f_{27} f_{47}}_{\mathclap{\substack{\text{corners} \\ \text{of $\mathbf{F}$}}}} \cdot \,\phi_T
\end{equation}

As a corollary of our main result,  using~\eqref{Hilbert series from Stanley decomp general}, we can write down the Hilbert--Poincar\'e series of $M_\sigma$.
As a preview of Corollary~\ref{cor:HS}, recalling that each $f_{ij}$ has degree 2 and $\phi_T$ has degree $|\sigma|$, we observe that the Stanley space in~\eqref{example in intro} contributes the term
\[
    \frac{(t^2)^{\text{\#corners of $\F$}}}{(1-t^2)^{\text{\#points in $\F$}}} \cdot t^{|\sigma|} = \frac{(t^2)^3}{(1-t^2)^{22}} \cdot t^{11}
\]
to the Hilbert--Poincar\'e series of $M_\sigma$.
In fact, by gathering together the jellyfish with respect to the endpoints of their lattice paths, we are able to write down the Hilbert--Poincar\'e series as a sum over just the sets of endpoints, rather than over all jellyfish.
Thus for the example above (upon programming the combinatorial definitions given in this paper), we obtain the following Hilbert--Poincar\'e series for $M_\sigma$:
\begin{align*}
 & \phantom{==} t^{|\sigma|} \cdot \hspace{-2ex}\sum_{\substack{\text{sets of} \\ \text{endpoints} \\ \mathbf{E}}} \left(\frac{\sum_{\text{$\F$ with endpoints $\mathbf{E}$}} \: (t^2)^{\text{\#corners of $\F$}}}{(1-t^2)^{\text{size of any $\F$ ending at $\mathbf{E}$}}} \right) \left( \text{\#tableaux $T$ that attach at $\mathbf{E}$} \right) \\
&=
t^{11} \bigg[ \underbrace{\left(\frac{1}{(1-t^2)^{18}}\right)(41580)}_{\text{$\mathbf{E}$ is topmost set of endpoints}} + \cdots + \underbrace{\left(\frac{1+t^2+t^4+t^6}{(1-t^2)^{27}}\right)(8316)}_{\text{$\mathbf{E}$ is bottommost set of endpoints}} \bigg] \\[2ex]
&= \frac{498960t^{11} - 887040 t^{13} + 460152 t^{15} + 44352 t^{17} - 121968 t^{19} + 44352 t^{21} - 5544 t^{23}}{(1-t^2)^{27}}.
\end{align*}
(The large coefficients are the price paid for having chosen an example big enough to yield an interesting diagram in~\eqref{example in intro}.)
In general, the Hilbert--Poincar\'e series can be verified using the generalized BGG resolutions constructed in~\cites{EW,EnrightHunziker04};
see also the Hilbert series results in Nishiyama--Zhu~\cite{NZ}*{\S4}, viewed from a very different approach.
By contrast, the jellyfish approach allows us to write these series explicitly as a finite sum of rational functions, whose numerators can be interpreted combinatorially in terms of lattice paths.

Section~\ref{sec:proofs} is devoted to the proof of Theorem~\ref{thm:Stanley decomps and HS}.
Our point of departure is the standard monomial theory for reductive dual pairs presented by Jackson~\cite{Jackson}.
Standard monomial theory was developed by Seshadri~\cite{Seshadri}, as a way of determining explicit linear bases for sections of line bundles over generalized flag varieties; see also~\cites{LakshimaiSeshadri,LakshmibaiRaghavan}.
In related work, De Concini--Procesi~\cite{DeConciniProcesi} wrote down canonical bases of standard monomials, in terms of (bi)tableaux, for the classical rings of invariants $\C[W]^H$; we also refer the reader to the treatments in~\cite{Procesi}*{Ch.~13} and~\cite{Lakshmibai}*{Ch.~10--12}, and our synthesis in~\cite{EricksonHunzikerTensors}.
In~\cite{Jackson}, standard monomial theory is applied to the ring $\C[W]^N$, where $N$ is a maximal unipotent subgroup of $H$; this ring can be viewed as the direct sum of the modules $M_\sigma$.
The key to our method is the use of Stanley--Reisner theory to carefully organize the standard monomials in $\C[W]^N$ into Stanley spaces.

\subsection*{Complications with the orthogonal group}
\label{sub:O complications}

Unlike the case where $H = \GL(V)$ or $\Sp(V)$, for $H = \O(V)$ it seems that a Stanley decomposition of $M_\sigma$ for arbitrary $\sigma$ (rather than only exterior powers of $V$) may be beyond the reach of our jellyfish approach.
There are several complications that arise in the $\O(V)$ case; these difficulties can already be glimpsed from the standard monomial theory in~\cite{Jackson}, discussed above.
Because $\O(V)$ is disconnected and certain irreducible representations contain two independent highest weight vectors, it is not true in general that $\C[W]^N$ is the multiplicity-free direct sum of the $M_\sigma$'s.
For this reason, Jackson~\cite{Jackson} imposes the condition $\dim V > 2n$ in the orthogonal case.
(The same condition is imposed in the closely related paper~\cite{HKL}.)
This condition automatically excludes all non-free modules of covariants, which are the only ones of real interest for us in the present paper.
Therefore we have adapted the arguments of~\cite{Jackson} (particularly the notion of ``split'' monomial generators) to the orthogonal group in our special case where $U_\sigma = \Wedge^m V$ (see Definition~\ref{def:split O} and Lemma~\ref{lemma:Ok standard monomials}).

An even more serious difficulty is the fact that the irreducible representations $U_\sigma$ of $\O(V)$ have an especially complicated structure as quotients of $\GL(V)$-representations, due to the \emph{orthogonal trace relations} which arise whenever the partition $\sigma$ has more than one column in its Young diagram; see~\cite{KingWelshOrthogonal}.
These trace relations seem to be incompatible with our approach using jellyfish.

\subsection*{Applications in this paper}

In Section~\ref{sec:Special cases}, we extend our jellyfish method to write down Stanley decompositions for the rings of invariants $\C[W]^H$ where $H = \SL(V)$ or $\SO(V)$; see Propositions~\ref{prop:SO invariants} and~\ref{prop:SL invariants}.
In Section~\ref{sec:linear bases}, via Howe duality, we view the classical modules of covariants $M_\sigma$ as $(\g, K)$-modules $L_{\la(\sigma)}$ of unitary highest weight representations of a real reductive group $G_\mathbb{R}$.
(See Theorem~\ref{thm:Howe duality}.)
In fact, for $H = \GL(V)$ or $\O(V)$, \emph{all} irreducible unitary highest weight representations of $G_\mathbb{R}$ arise in this way, as some $L_{\la(\sigma)}$ in the dual pair setting.
We flatten our jellyfish into arc diagrams whose degree sequence encodes the weight basis structure of these modules $L_{\la(\sigma)}$; see Proposition~\ref{prop:weight from degree sequence}.

An ultimate goal of the program in this paper is to find Stanley decompositions for arbitrary unitary highest weight representations.
In the dual pair setting, the modules $L_{\la(0)}$ (corresponding to the invariant rings $\C[W]^H$) are instances of \emph{Wallach representations}, which are defined for any Hermitian symmetric pair $(\g, \k)$.
Outside the dual pair setting, in the cases where $\g$ is simply laced, we show (Section~\ref{sec:ADE}) that Stanley decompositions for Wallach representations can be obtained using the same lattice path method we used for the classical groups: 
namely, for the $k$th Wallach representation, we parametrize the Stanley spaces by families of $k$ maximal nonintersecting lattice paths in the Hasse diagram of the poset of positive noncompact roots of $\g$.
In this way, we reinterpret the Hilbert--Poincar\'e series of the Wallach representations~\cites{EnrightHunziker04,EnrightHunzikerExceptional} in terms of lattice paths, for all Hermitian symmetric pairs of type ADE.
This marks a first step toward extending our jellyfish method outside the dual pair setting.

\subsection*{Further applications}

A further application of our main result is developed in our preprint~\cite{EricksonHunzikerBD2024}.
In that paper, for $H = \GL_k$ and $\Sp_{2k}$ (and conjecturally for $\O_k$), we give a uniform tableau-theoretic interpretation of the multiplicity of the modules of covariants $M_\sigma$, or equivalently, the Bernstein degree of the $(\g,K)$-modules $L_{\la(\sigma)}$.
(Recall that the \emph{multiplicity} of a graded module is obtained by evaluating the numerator of its reduced Hilbert--Poincar\'e series at $t=1$.)
Our result in~\cite{EricksonHunzikerBD2024} both refines and generalizes the main result of Nishiyama--Ochiai--Taniguchi~\cite{NOT}, which gave a formula for Bernstein degree in the range where $k$ is at most the real rank of $G_\mathbb{R}$.
By contrast, our combinatorial Bernstein degree formula in~\cite{EricksonHunzikerBD2024} is valid for all values of $k$, and leads to a closed determinantal formula as well.
The key fact, apparent from~\eqref{Hilbert series from Stanley decomp general}, is that the multiplicity of a graded module $M$ equals the number of Stanley spaces of maximal Krull dimension;
using this in the context of the present paper, we reinterpret the multiplicity of $M_\sigma$ by counting those tableaux whose initial column entries satisfy certain inequalities.

The main result in this paper suggests a strong connection to \emph{equivariant machine learning}, which has grown rapidly as a research area during the last three years.
For machine learning problems exhibiting symmetries (such as those arising naturally in physical sciences), restricting the learning class to equivariant functions between data spaces leads to dramatic improvement over ordinary approaches; 
for a detailed summary, see~\cite{Haddadin} and the references therein.
Therefore, a fundamental problem in equivariant machine learning is finding suitable parametrizations of equivariant functions between group representations.
For example, recent advances in equivariant machine learning have exploited Hironaka decompositions of modules of covariants that are Cohen--Macaulay~\cite{Haddadin}*{p.~88}; as mentioned above, however, such modules are quite rare.
The Stanley decompositions in Theorem~\ref{thm:Stanley decomps and HS} give an equally concrete parametrization, without requiring Cohen--Macaulayness; in fact, they broadly generalize the examples given in the recent \emph{Notices} article~\cite{BlumVillar}. We are therefore hopeful that Theorem~\ref{thm:Stanley decomps and HS} may be of some interest to experts in machine learning.

\section{Classical invariant theory and Stanley--Reisner rings}

Along with relevant exposition, the primary aim of this section is to define the functions $f_{ij}$ and $\phi_T$ that will appear in our main result.

\subsection*{The functions $f_{ij}$}
\label{sub:Weyl}

Let $\M_{k,\ell}$ denote the space of complex $k \times \ell$ matrices, and set $\M_k \coloneqq \M_{k,k}$.
Throughout the paper, $H$ always denotes one of the complex classical groups (the general linear group, the orthogonal group, or the symplectic group, respectively):
\begin{align*}
    \GL_k & \coloneqq \{ h \in \M_k : \det h \neq 0 \},\\
    \O_k & \coloneqq \{ h \in \GL_k : h^t = h^{-1} \},\\
    \Sp_{2k} & \coloneqq \{h \in \GL_{2k} : h^t J h = J \},
\end{align*}
where $J = \left( \begin{smallmatrix}
    0 & I\\-I & 0
\end{smallmatrix} \right)$ and $I$ is the $k \times k$ identity matrix.
Given a classical group $H$, we write $V$ to denote its defining representation $\C^k$ (or $\C^{2k}$, if $H = \Sp_{2k}$), and we write $V^*$ to denote the contragredient representation.
Note that $\O_k$ is the subgroup of $\GL_k$ that preserves the nongenerate symmetric bilinear form $b(v,w) = v^tw$ on $V$; likewise, $\Sp_{2k}$ is the subgroup of $\GL_k$ that preserves the nondegenerate skew-symmetric bilinear form $\omega(v,w) = v^t J w$ on $V$.
Hence for $H = \O_k$ or $\Sp_{2k}$, the form $b$ or $\omega$ induces an equivalence of representations $V \cong V^*$.

Let $W$ denote the direct sum of an arbitrary number of copies of $V$ and $V^*$.
By Weyl's first fundamental theorem of classical invariant theory~\cite{Weyl}, the ring of invariants $\C[W]^H$ is generated by the following contractions $f_{ij}$, ranging over all ordered pairs $(i,j)$ in the index set $\mathbf{P}$:
\renewcommand{\arraystretch}{1.3}
\begin{equation}
    \label{table P fij}
    \begin{array}{|c|c|c|c|}
    \hline
    H & W & \text{Generators $f_{ij}$ of $\C[W]^H$} & \mathbf{P} = \{(i,j) \in \mathbb{N}^2 : \ldots \} \\ \hline
    \GL_k & V^{*p} \oplus V^q & f_{ij} : (v^*_1, \ldots, v^*_p, v_1, \ldots, v_q) \mapsto v^*_i(v_j) & 1 \leq i \leq p, \: 1 \leq j \leq q \\ \hline
    \O_k & V^n & f_{ij} : (v_1, \ldots, v_n) \mapsto b(v_i, v_j) & 1 \leq i \leq j \leq n \\ \hline
    \Sp_{2k} & V^n & f_{ij} : (v_1, \ldots, v_n) \mapsto \omega(v_i, v_j) & 1 \leq i < j \leq n \\ \hline 
    \end{array}
\end{equation}
We choose the notation $\mathbf{P}$ due to the poset structure we will impose in Section~\ref{sec:proofs}.
It will often be convenient to introduce the following matrix coordinates on $W$:
\begin{equation}
    \label{table coordinates}
    \begin{array}{|c|c|c|c|}
    \hline
    H & \C[W] & \text{$H$-action on $\C[W]$} & f_{ij} \\ \hline
    \GL_k & \begin{aligned} &\C[\M_{p,k} \oplus \M_{k,q}] \\ = \; & \C[\{y_{ij}\} \cup \{x_{ij}\}] \end{aligned} & h \cdot f(Y,X) = (Yh, h^{-1}X) & \displaystyle\sum_{\ell=1}^k y_{i \ell} x_{\ell j} 
    \\ \hline
    \O_k & \begin{aligned} & \C[\M_{k,n}] \\ = \; & \C[x_{ij}] \end{aligned} & h \cdot f(X) = f(h^{-1}X) & \displaystyle\sum_{\ell = 1}^k x_{\ell i} x_{\ell j}
    \\ \hline
    \Sp_{2k} & \begin{aligned}
        & \C[\M_{2k, n}] \\
        = \; & \C[x_{ij}]
    \end{aligned} & h \cdot f(X) = f(h^{-1} X) & \displaystyle\sum_{\ell = 1}^k (x_{\ell i} x_{k+\ell, j} - x_{k+\ell,i} x_{\ell j} ) 
    \\ \hline 
    \end{array}
    \end{equation}

\subsection*{Semistandard tableaux and irreducible representations}

A \emph{partition} is a finite, weakly decreasing sequence of positive integers (called its \emph{parts}).
Given a partition $\sigma = (\sigma_1, \ldots, \sigma_m)$, we write $\ell(\sigma) \coloneqq m$ for its \emph{length}, and we write $|\sigma| \coloneqq \sum_i \sigma_i$ for its \emph{size}.
We write $(a^m) \coloneqq (a, \ldots, a)$, and we write $0$ for the empty partition, which has length $0$.
A \emph{Young diagram} of shape $\sigma$ is a left-justified arrangement of $|\sigma|$ many boxes, where the $i$th row from the top contains $\sigma_i$ many boxes.
A \emph{semistandard Young tableau} is a Young diagram in which each box is filled with a positive integer, such that the entries are weakly increasing along each row, and strictly increasing down each column.
Let $[n] \coloneqq \{1,2,\ldots,n\}$.
We write
\[
\SSYT(\sigma, n) \coloneqq \{\text{semistandard Young tableaux of shape $\sigma$, with entries taken from $[n]$}\}.
\]
Note that if $\ell(\sigma) > n$, then $\SSYT(\sigma, n) = \varnothing$.
Below is an example of a partition $\sigma$ along with a semistandard tableau of shape $\sigma$:
\[
\ytableausetup{smalltableaux,centertableaux}
\sigma = (4,3,3,2,1) \quad \leadsto \quad \ytableaushort{1225,234,446,55,6} \in \SSYT(\sigma,n), \text{ where $n \geq 6$.}
\]
We write $\sigma'_j$ to denote the number of boxes in the $j$th column of the Young diagram of $\sigma$; hence in the example above, we have $\sigma'_1 = 5$, $\sigma'_2 = 4$, and so on.
We denote the empty tableau (of shape 0) by the symbol $\varnothing$; thus we have $\SSYT(0, n) = \{ \varnothing \}$.
Given a tableau $T$, we write $T_{i,j}$ to denote the entry in the $i$th row from the top and the $j$th column from the left.

In the case of the group $\GL_k$, we will need to generalize partitions by allowing nonpositive parts, still in weakly decreasing order.
Upon fixing the total number of parts to be $k$, any such generalized partition $\sigma$ can be expressed uniquely as an ordered pair $\sigma = (\sigma^+, \sigma^-)$, where $\sigma^+$ and $\sigma^-$ are true partitions: in particular, $\sigma^+$ consists of the positive parts of $\sigma$, and $\sigma^-$ is the partition obtained by negating and reversing the negative parts of $\sigma$.
For example, if $k=9$ and $\sigma = (6,3,3,2,0,0,-1,-3,-5)$, then we have $\sigma^+ = (6,3,3,2)$ and $\sigma^- = (5,3,1)$.
We write $|\sigma| \coloneqq |\sigma^+| + |\sigma^-|$.

Let $\widehat{H}$ denote the set of irreducible rational representations of $H$ (up to equivalence).
It is well known that the elements of $\widehat{H}$ can be labeled by partitions (or pairs thereof), as follows:
\begin{equation}
\label{H hat}
\widehat{H} = \begin{cases}
    \{ \sigma = (\sigma^+, \sigma^-) : \ell(\sigma^+) + \ell(\sigma^-) \leq k\}, & H = \GL_k,\\[1ex]
    \{ \sigma: \sigma'_1 + \sigma'_2 \leq k \}, & H = \O_k,\\[1ex]
    \{ \sigma: \ell(\sigma) \leq k \}, & H = \Sp_{2k}.
\end{cases}
\end{equation}
Given a classical group $H$, we write $U_\sigma$ to denote a model for the irreducible representation of $H$ labeled by $\sigma \in \widehat{H}$.
If $H = \GL_k$ or $\Sp_{2k}$, then $U_\sigma$ is the irreducible representation of $H$ with highest weight $\sigma$ (in standard coordinates).
For $H = \O_k$, which is disconnected, the situation is more subtle; see~\cite{GW}*{pp.~438--9} for details.
Our object of interest is the \emph{module of covariants} of type $U_\sigma$, denoted by
\begin{equation}
    \label{M sigma}
    M_\sigma \coloneqq \left\{ \text{polynomial maps $\phi: W \longrightarrow U_\sigma$} \; \middle| \; \begin{array}{l} \phi(h \cdot w) = h \cdot \phi(w) \\ \text{for all $h \in H$ and $w \in W$} \end{array} \right\}.
\end{equation}

\subsection*{The functions $\phi_T$: general linear and symplectic groups} \label{sub:phi T}

Throughout this subsection, let $H = \GL_k$ or $\Sp_{2k}$.
Using the coordinate functions $x_{ij}$ and $y_{ij}$ given in~\eqref{table coordinates}, we define the following determinants in $\C[W]$, where $I$ and $J$ are assumed to be sets of positive integers:
\begin{equation}
    \label{table dets GL Sp}
    \begin{array}{|c|c|c|c|}
    \hline
    H & \text{det functions} & \mathcal{T}(\sigma) & \det_T \\ \hline
    \GL_k 
    & \begin{array}{l} \det_J \coloneqq \det\left[x_{ij}\right]_{\begin{subarray}{l} i \in \{k-|J|+1, \ldots, k\}, \\ j \in J, \end{subarray}} \\[2.5ex]
    \det^*_I \coloneqq \det \left[y_{ij}\right]_{\begin{subarray}{l} i \in I, \\ j \in \{1, \ldots, |I|\} \end{subarray}}\end{array} 
    & \begin{array}{r}\SSYT(\sigma^+, q) \\ \times \SSYT(\sigma^-, p) \end{array}
    &
    \quad \displaystyle \prod_{\mathclap{\substack{\text{columns} \\ \text{$J$ of $T^+$} \\ \phantom{.}}}} {\textstyle \det_J \cdot} \prod_{\mathclap{\substack{\text{columns} \\ \text{$I$ of $T^-$}}}} {\textstyle \det^*_I} 
    \\ \hline
    \Sp_{2k} 
    & \det_J \coloneqq \det\left[x_{ij}\right]_{
    \begin{subarray}{l}
        i \in \{k+1, \ldots, k+|J|\},\\
        j \in J
    \end{subarray}}
    & 
    \SSYT(\sigma, n) & \displaystyle\prod_{\mathclap{\substack{\text{columns} \\ \text{$J$ of $T$} \\ \phantom{.}}}}^{\phantom{'}} {\textstyle \det_J}
    \\ \hline 
    \end{array}
\end{equation}
In~\eqref{table dets GL Sp}, for each $\sigma \in \widehat{H}$ we introduce the set $\mathcal{T}(\sigma)$ consisting of certain semistandard tableaux (or pairs thereof);
in particular, for $H = \GL_k$, we write a generic element of $\mathcal{T}(\sigma)$ as a tableau pair $T = (T^+, T^-)$.
Each $T \in \mathcal{T}(\sigma)$ induces the function $\det_T$ defined in~\eqref{table dets GL Sp}, which is the product of det functions obtained by viewing each column of $T$ as the set of its entries.

Since $\GL_k$ and $\Sp_{2k}$ are connected, a choice of maximal unipotent subgroup $N \subseteq H$ uniquely determines a Borel subgroup, which then contains a unique maximal torus isomorphic to $(\C^\times)^k$.
Each $\mu = (\mu_1, \ldots, \mu_k) \in \mathbb{Z}^k$ gives rise to the character $\chi_\mu : (t_1, \ldots, t_k) \mapsto t_1^{\mu_1} \cdots t_k^{\mu_k}$.
A vector $v$ in an $H$-module is called a \emph{weight vector} with weight $\mu$ if, for all $t$ in the maximal torus, we have $t \cdot v = \chi_\mu(t) v$.
In this case we write ${\rm wt}_H(v) = \mu$.
In order to express the weight of a monomial
\[
\mathbf{m} = \prod_{i,j} x^{d_{ij}} \prod_{i,j} y^{e_{ij}} \in \C[W],
\]
we will write 
\begin{equation}
\label{deg notation}
\begin{array}{cc}
\begin{aligned}
    \deg_{x_{i \bullet}}(\mathbf{m}) & \coloneqq \sum_{j} d_{ij},\\
    \deg_{x_{\bullet j}}(\mathbf{m}) &\coloneqq \sum_{i} d_{ij},
\end{aligned}
&
\begin{aligned}
    \deg_{y_{i \bullet}}(\mathbf{m}) & \coloneqq \sum_{j} e_{ij},\\
    \deg_{y_{\bullet j}}(\mathbf{m}) &\coloneqq \sum_{i} e_{ij}.
\end{aligned}
\end{array}
\end{equation}
We write $\sigma^*$ to denote the highest weight of $U_\sigma^*$; in terms of the longest element $w_0$ of the Weyl group of $H$, one has $\sigma^* = -(w_0 \cdot \sigma)$.
Concrete details are given below:

\begin{itemize}
    \item For $H = \GL_k$, let $N$ consist of the upper triangular matrices with $1$'s along the diagonal.
    Then the maximal torus consists of the diagonal matrices
\[
    t = {\rm diag}(t_1, \ldots, t_k), \qquad t_i \in \C^\times.
\]
    Given the $H$-action on $\C[W]$ in~\eqref{table coordinates}, the torus acts on coordinate functions via
\[
    t \cdot y_{ij} = t_j y_{ij}, \qquad t \cdot x_{ij} = t_i^{-1} x_{ij}.
\]
Therefore with respect to this action, the weight of a monomial $\mathbf{m} \in \C[W]$ is given by 
\begin{equation}
    \label{wt H GL}
    {\rm wt}_H(\mathbf{m}) = \Big( \deg_{y_{\bullet 1}}(\mathbf{m}) - \deg_{x_{1 \bullet}}(\mathbf{m}), \ldots, \deg_{y_{\bullet k}}(\mathbf{m}) - \deg_{x_{k \bullet}}(\mathbf{m})\Big).
\end{equation}
Comparing this with~\eqref{table dets GL Sp}, for all $\sigma \in \widehat{H}$ and $T \in \mathcal{T}(\sigma)$ we have
\begin{equation}
    \label{wt det GL} 
    {\rm wt}_{H} (\textstyle \det_{T}) = (\sigma^-, \sigma^+) = \sigma^*,
\end{equation}
where $\sigma^*$ is the $k$-tuple obtained by reversing and negating $\sigma$.

\item For $H = \Sp_{2k}$, let $N$ consist of the matrices with block form $\left[ \begin{smallmatrix} U & A \\ 0 & L\end{smallmatrix}\right]$, where $U,L,A \in \M_k$ such that $U$ (\resp $L$) is upper triangular (\resp lower triangular) with 1's on the diagonal.
Then the maximal torus consists of diagonal matrices of the form
\[
t = {\rm diag}(t_1, \ldots, t_k, t_1^{-1}, \ldots, t_k^{-1}), \qquad t_i \in \C^\times.
\]
Given the $H$-action on $\C[W]$ in~\eqref{table coordinates}, the torus acts on coordinate functions as follows, for $1 \leq i \leq k$:
\[
    t \cdot x_{ij} = t_i^{-1} x_{ij}, \qquad
    t \cdot x_{k+i,j} = t_{i} x_{k+i,j}.
\]
Therefore with respect to this action, the weight of a monomial $\mathbf{m} \in \C[W]$ is given by 
\begin{equation}
    \label{wt H Sp}
    {\rm wt}_H(\mathbf{m}) = \scalebox{.8}{$\Big( \deg_{x_{k+1, \bullet}}(\mathbf{m}) - \deg_{x_{1, \bullet}}(\mathbf{m}), \deg_{x_{k+2, \bullet}}(\mathbf{m}) - \deg_{x_{2, \bullet}}(\mathbf{m}), \ldots, \deg_{x_{2k,\bullet}}(\mathbf{m}) - \deg_{x_{k, \bullet}}(\mathbf{m})\Big).$}
\end{equation}
Comparing this with~\eqref{table dets GL Sp}, for all $\sigma \in \widehat{H}$ and $T \in \mathcal{T}(\sigma)$ we have
\begin{equation}
    \label{wt det Sp}
    {\rm wt}_H (\textstyle \det_T) = \sigma = \sigma^*,
\end{equation}
where $\sigma = \sigma^*$ since every rational representation of $\Sp_{2k}$ is self-dual.
\end{itemize}

Assume still that $H = \GL_k$ or $\Sp_{2k}$.
Let $\sigma \in \widehat{H}$.
Let $u_{_{\rm LW}} \in U_\sigma$ be a lowest weight vector; that is, ${\rm wt}_H(u_{_{\rm LW}}) = w_0 \cdot \sigma$.
Let $u^*_{_{\rm LW}} \in U_\sigma^*$ be the linear functional sending $u_{_{\rm LW}}$ to 1, and sending every higher weight vector to 0.
Note that ${\rm wt}_H(u^*_{_{\rm LW}}) = -(w_0 \cdot \sigma) = \sigma^*$, and therefore $u_{_{\rm LW}}^*$ is a highest weight vector in $U_\sigma^*$.
Due to the $H$-equivariance property in~\eqref{M sigma}, any $\phi \in M_\sigma$ is uniquely determined by the function $u_{_{\rm LW}}^* \circ \phi \in \C[W]$; 
in other words, for all $w \in W$, its image $\phi(w) \in U_\sigma$ is completely determined by its $u_{_{\rm LW}}$-component.
Note that ${\rm wt}_H(u_{_{\rm LW}}^* \circ \phi) = \sigma^*$, since for $t$ in the maximal torus of $H$ we have
\begin{align}
\label{weight of u circ phi}
\begin{split}
    (t \cdot (u_{_{\rm LW}}^* \circ \phi))(w) &= u_{_{\rm LW}}^* (\phi(t^{-1} \cdot w)) \\
    &= u_{_{\rm LW}}^* ( t^{-1} \cdot \phi(w)) \\
    &= (t \cdot u_{_{\rm LW}}^*)(\phi(w)) = \chi_{\sigma^*}(t) \: (u_{_{\rm LW}}^* \circ \phi)(w).
\end{split}
\end{align}
Repeating the same calculation for an element of $N$ shows that $u_{_{\rm LW}}^* \circ \phi$ is $N$-invariant, since $u_{_{\rm LW}}^*$ is a highest weight vector in $U_\sigma^*$.
Conversely, any $N$-invariant function $f \in \C[W]^N$ with weight $\sigma^*$ determines a unique element $\phi \in M_\sigma$ such that $u_{_{\rm LW}}^* \circ \phi = f$.
(In fact, this map is the inverse of the canonical isomorphism $\Psi$ in~\eqref{Psi} below.)
Therefore, in light of~\eqref{wt det GL} and~\eqref{wt det Sp}, and the $N$-invariance of $\det^*_I$ and $\det_J$ in~\eqref{table dets GL Sp}, the functions $\det_T$ can play the role of this $f$;
indeed, for all $T \in \mathcal{T}(\sigma)$, we now define
\begin{equation}
    \label{phi_T GL Sp}
     \phi_T \coloneqq \text{the unique element $\phi \in M_\sigma$ such that $u_{_{\rm LW}}^* \circ \phi = \textstyle \det_T$}.
\end{equation}
In words, $\phi_T$ sends $w \in W$ to the vector in $U_\sigma$ whose lowest weight component equals $\det_T(w)$ and whose remaining weight components are determined by $H$-equivariance.
Note that with respect to the standard grading on $\C[W]$ whereby each coordinate function has degree 1, both $\det_T$ and $\phi_T$ have degree $|\sigma|$.

\subsection*{The functions $\phi_T$: orthogonal group}
\label{sub:O tableaux}

Let $H = \O_k$, and let $\sigma = (1^m)$ for some $0 \leq m \leq k$.
Since $\O_k$ is disconnected, the highest weight arguments used in the previous subsection are not valid, so we will construct the analogous maps $\phi_T$ directly.
Since $T \in \mathcal{T}(\sigma)$ is a single column with entries $T_1 < \cdots < T_m$, we will make the identification
\[
\mathcal{T}(\sigma) = \textstyle \binom{[n]}{m} \coloneqq \{ \text{$m$-element subsets of $[n]$} \},
\]
so that $T = \{T_1, \ldots, T_m\}$.
We define an $H$-equivariant map $\phi_T \in M_\sigma$ in the obvious way:
\begin{align}
\label{phi O}
\begin{split}
    \phi_T : W & \longrightarrow U_\sigma = \Wedge^m V,\\
    (v_1, \ldots, v_n) &\longmapsto v_{_{T_1}} \wedge \cdots \wedge v_{_{T_m}}.
    \end{split}
\end{align}
Note that $\phi_T$ is a polynomial function of degree $|\sigma| = m$.

\subsection*{Stanley--Reisner theory}
\label{sub:Stanley-Reisner}

 We follow the exposition given in~\cite{StanleyAC}*{Ch.~12}.
 Given a finite set $\mathbf{V}$, an \emph{abstract simplicial complex} on $\mathbf{V}$ is a collection $\Delta$ of subsets of $\mathbf{V}$ such that
 \begin{itemize}
     \item $\{v\} \in \Delta$ for all $v \in \mathbf{V}$;
     \item if $\mathbf{S} \in \Delta$ and $\mathbf{R} \subseteq \mathbf{S}$, then $\mathbf{R} \in \Delta$.
 \end{itemize}
 The set $\mathbf{V}$ is called the \emph{vertex set}; elements $v \in \mathbf{V}$ are called \emph{vertices}, and the elements of $\Delta$ are called \emph{faces}. 
A maximal face in $\Delta$ (with respect to inclusion) is called a \emph{facet}.
Let
\[
\mathcal{F}(\Delta) \coloneqq \{ \F: \text{$\F$ is a facet of $\Delta$} \}.
\]
We say that $\Delta$ is \emph{pure} if every facet has the same cardinality.  
A pure simplicial complex is said to be \emph{shellable} if there exists an ordering $\F_1, \ldots, \F_{r}$ of its facets with the following property: 
for all $i = 1, \ldots, r$, the facet $\F_i$ contains a unique minimal subset ${\rm res}(\F_i)$ not belonging to the subcomplex generated by $\F_1, \ldots, \F_{i-1}$.
Such an ordering is called a \emph{shelling} of $\Delta$, and ${\rm res}(\F_i)$ is called the \emph{restriction} of the facet $\F_i$.

To each $v \in \mathbf{V}$ we associate the indeterminate $z_v$.
Given a subset $\mathbf{S} \subseteq \mathbf{V}$, we write
\[
z_{\mathbf{S}} := \prod_{v \in \mathbf{S}} z_v \qquad \text{and} \qquad \C[\mathbf{S}] := \C[z_v : v \in \mathbf{S} ].
\]
Let $I_\Delta$ denote the ideal of $\C[\mathbf{V}]$ generated by all monomials $z_{\mathbf{S}}$ such that $\mathbf{S} \not\in \Delta$.
Such an $\mathbf{S}$ is called a \emph{nonface} of $\Delta$, and $I_\Delta$ is generated by the minimal nonfaces (\ie nonfaces that contain no proper nonface).
The \emph{support} of a monomial $\mathbf{z} \in \C[\mathbf{V}]$ is the set
\begin{equation}
    \label{support definition}
    {\rm supp}(\mathbf{z}) \coloneqq \{ v \in \mathbf{V} : \text{$z_v$ divides $\mathbf{z}$}\}.
\end{equation}
The quotient
\begin{equation}
    \label{C[Delta]}
    \C[\Delta] := \C[\mathbf{V}]/I_\Delta
\end{equation}
is called the \emph{Stanley--Reisner} ring of $\Delta$.  
The ring $\C[\Delta]$ has a linear basis consisting of (the images of) the monomials $\mathbf{z}$ such that ${\rm supp}(\mathbf{z}) \in \Delta$.
Moreover, each shelling of $\Delta$ induces a Stanley decomposition 
\begin{equation}
    \label{Stanley decomp SR ring}
    \C[\Delta] = \bigoplus_{\mathclap{\F \in \mathcal{F}(\Delta)}} \: \C[\F] \: z_{{\rm res}(\F)},
\end{equation}
where we identify monomials in $\C[\mathbf{V}]$ with their images in $\C[\Delta]$.

\subsection*{Posets and labelings}
\label{sub:posets}

Now suppose that the vertex set $\mathbf{V}$ is equipped with a partial order $\leq$.
We use standard terminology from order theory; see~\cite{Wachs}.
Suppose further that the poset $\mathbf{V}$ is \emph{bounded}, that is, contains a minimal and maximal element.
Recall that a subset $\mathbf{S} \subseteq \mathbf{V}$ is called a \emph{chain} if $\mathbf{S}$ is totally ordered with respect to $\leq$;
it is called an \emph{antichain} if, for all distinct $v,w \in \mathbf{S}$, neither $v \leq w$ nor $w \leq v$.
The \emph{width} of a subset $\mathbf{S} \subseteq \mathbf{V}$ is the cardinality of the largest antichain contained in $\mathbf{S}$; we denote this by ${\rm width}(\mathbf{S})$.
Equivalently, by Dilworth's theorem, ${\rm width}(\mathbf{S})$ is the smallest number of disjoint chains into which $\mathbf{S}$ can be partitioned.
We write $v \lessdot w$ to denote the \emph{covering} relation, meaning that $v < w$ and there is no element $x \in \mathbf{V}$ such that $v < x < w$.
A \emph{saturated} chain takes the form $v_1 \lessdot v_2 \lessdot \cdots \lessdot v_\ell$.

The \emph{Hasse diagram} of $\mathbf{V}$ is the graph with vertex set $\mathbf{V}$ and edge set $E := \{(v,w) \in \mathbf{V} \times \mathbf{V} : v \lessdot w\}$, oriented so that each edge $(v,w)$ is drawn upward from $v$ to $w$.
A \emph{labeling} of $\mathbf{V}$ is a function $\alpha : E \rightarrow \mathbb{Z}_{\geq 0}$ assigning a nonnegative integer to each edge of the Hasse diagram.
Each labeling $\alpha$ induces a lexicographic ordering on the set of saturated chains $v_1 \lessdot \cdots \lessdot v_\ell$, via the lexicographic order on their \emph{label sequences} $(\alpha(v_1,v_2), \ldots, \alpha(v_{\ell-1},v_\ell))$.

\begin{defi}[EL-labeling; see~\cite{BW83}*{Def.~2.1}]
\label{def:EL-labeling}
    Let $\mathbf{V}$ be a finite bounded poset.
    A labeling $\alpha$ of $\mathbf{V}$ is said to be an \emph{edge-lexicographic labeling} (EL-labeling for short) if, for all $v,w \in \mathbf{V}$ such that $v<w$,
    there exists a unique saturated chain $v \lessdot \cdots \lessdot w$ whose label sequence is nondecreasing and lexicographically precedes the label sequences of all other saturated chains $v \lessdot \cdots \lessdot w$.
\end{defi}

The \emph{order complex} $\Delta(\mathbf{V})$ is the abstract simplicial complex whose faces are the chains in $\mathbf{V}$; hence the facets of $\Delta(\mathbf{V})$ are the maximal chains in $\mathbf{V}$.
Let $\alpha$ be an EL-labeling of $\mathbf{V}$, and $\F$ a facet of $\Delta(\mathbf{V})$.
An element $v \in \F$ is said to be a \emph{descent} of $\F$ (with respect to $\alpha$) if $\F$ contains elements $u \lessdot v \lessdot w$ such that $\alpha(u,v) > \alpha(v,w)$.

\begin{lemma}[See~\cite{Bjorner80}*{Thm.~2.3} and~\cite{BW96}*{Thm.~5.8}]
\label{lemma:EL labeling induces shelling}
    An EL-labeling $\alpha$ of $\mathbf{V}$ induces a shelling of the order complex $\Delta(\mathbf{V})$, where the facets \textup{(}\ie maximal chains\textup{)} are ordered lexicographically by their label sequences with respect to $\alpha$.
    \textup{(}While an EL-labeling does not guarantee that the maximal chains are totally ordered, nevertheless, arbitrarily breaking ties results in a shelling order.\textup{)}
    With respect to a shelling induced by $\alpha$, the restriction of each facet is precisely its set of descents:
\begin{equation}
    \label{R equals descents}
    {\rm res}(\F) = \{ v : \textup{$v$ is a descent of $\F$}\}.
\end{equation}
\end{lemma}

\section{Main result: Stanley decompositions via jellyfish}
\label{sec:Stanley decomps main}

Let $M$ be a finitely generated graded $S$-module, where $S$ is a polynomial ring over $\C$.
Following~\cite{BKU}*{Def.~2.1}, a \emph{Stanley decomposition} of $M$ is a finite family $(S_i, \eta_i)_{i \in I}$
where $\eta_i \in M$ is homogeneous, and $S_i$ is a graded $\C$-algebra retract of $S$ such that $S_i \cap \operatorname{Ann} \eta_i = 0$, and 
\begin{equation}
\label{Stanley decomp module}
    M = \bigoplus_{i \in I} S_i \eta_i
\end{equation}
as a graded vector space.
Each summand in~\eqref{Stanley decomp module} is called the \emph{Stanley space} corresponding to $i \in I$.

\subsection*{Uniform overview}
In our context, in~\eqref{Stanley decomp module} we take $M = M_\sigma$ to be the module of covariants defined in~\eqref{M sigma}, and we take $S = \C[\mathbf{P}] \coloneqq \C[z_{ij} : (i,j) \in \mathbf{P}]$, acting on $M_\sigma$ via the algebra homomorphism
\begin{align}
    \label{pi star map}
    \begin{split}
    \pi^*: S & \longrightarrow \C[W]^H,\\
    z_{ij} & \longmapsto f_{ij},
    \end{split}
\end{align}
where the $f_{ij}$'s are the quadratic contractions defined in~\eqref{table P fij}.
(The map $\pi^*$ arises as a certain comorphism in the theory of Hermitian symmetric pairs; see~\eqref{pi star again} below for details.)
Hence the $S$-action on $M_\sigma$ is given by $z_{ij} \cdot \phi = f_{ij} \: \phi$.
Our main result is Theorem~\ref{thm:Stanley decomps and HS} below, stated in a uniform manner for the three classical groups $H = \GL_k$, $\O_k$, and $\Sp_{2k}$.
The upshot of the theorem is that each module of covariants $M_\sigma$ admits a Stanley decomposition where the Stanley spaces are parametrized by certain combinatorial objects we call \emph{jellyfish} of shape $\sigma$ (see Definition~\ref{def:jellyfish} below).
We summarize the key constructions and notation in the following paragraph.

We depict $\mathbf{P}$ as a grid of points, where $(i,j)$ lies in row $i$ and column $j$.
By a \emph{southeast lattice path} in $\mathbf{P}$, we mean a subset of $\mathbf{P}$ obtained by starting at some point and taking steps either south or east, that is, in the direction $(1,0)$ or $(0,1)$.
We define a \emph{northeast lattice path} similarly, replacing $(1,0)$ by $(-1,0)$.
If the starting point and endpoint coincide, then the lattice path contains only that single point.
The subset $\mathbf{A} \subseteq \mathbf{P}$ is the region lying above a certain antidiagonal (to be defined for each group $H$ below).
Then $\delta(\mathbf{A})$ or $\delta(\mathbf{P})$ denotes the set of points lying on a certain boundary of $\mathbf{A}$ or $\mathbf{P}$, respectively (to be defined for each group $H$ below).
In our construction of jellyfish to follow, we assume that
\[
k \leq \#\delta(\mathbf{P}),
\]
since we will construct $k$ nonintersecting lattice paths with endpoints in $\delta(\mathbf{P})$.
Our main result (Theorem~\ref{thm:Stanley decomps and HS}) will treat the somewhat trivial $k \geq \#\delta(\mathbf{P})$ case separately, without reference to jellyfish.
We define the set
\renewcommand{\arraystretch}{1}
\begin{equation}
    \label{F}
    \mathcal{F} \coloneqq \left\{\big(\mathbf{A} \backslash \delta(\mathbf{A})\big) \sqcup \coprod_{t=1}^k \mathbf{L}_t  : \begin{array}{l} 
    \text{each $\mathbf{L}_t$ is a lattice path} \\
    \text{from $\delta(\mathbf{A})$ to $\delta(\mathbf{P})$}
    \end{array}
    \right\},
\end{equation}
where for $H = \GL_k$ or $\Sp_{2k}$ the lattice paths are southeast, while for $\O_k$ they are northeast.
(The ``$\mathcal{F}$'' notation is meant to suggest ``facets,'' and will be justified in Section~\ref{sec:proofs}.)
We write $\cor(\F) \subseteq \F$ to denote the set of \emph{corners} of $\F \in \mathcal{F}$, meaning certain turns in the defining lattice paths (to be defined for each group $H$ below), and we use the shorthand
\begin{equation}
    \label{f cor}
    f_{\cor(\F)} \coloneqq \prod_{\mathclap{(i,j) \in \cor(\F)}} f_{ij}.
\end{equation}
(Although for $H = \GL_k$ the decomposition of $\F \in \mathcal{F}$ into lattice paths $\mathbf{L}_t$ is in general not unique, nonetheless it turns out that the corners of $\F$ are well defined.)
We define the collection
\begin{equation}
    \label{E}
    \E \coloneqq \left\{ 
    \{ \text{endpoint of $\mathbf{L}_t$} \}_{t=1}^k : \begin{array}{l} 
    \{\mathbf{L}_t\}_{t=1}^k \text{ is a family of nonintersecting}\\ \text{lattice paths from $\delta(\mathbf{A})$ to $\delta(\mathbf{P})$}
    \end{array}
    \right\},
\end{equation}
consisting of all possible sets $\mathbf{E} \subseteq \delta(\mathbf{P})$ of endpoints for the lattice paths appearing in~\eqref{F}.
For $\mathbf{E} \in \E$, we define
\begin{equation}
    \label{F_E}
    \mathcal{F}_{\mathbf{E}} \coloneqq \left\{ \F \in \mathcal{F} : \begin{array}{l} 
    \F = ( \mathbf{A} \backslash \delta(\mathbf{A})) \sqcup \coprod_{t=1}^k \mathbf{L}_t, \\
    \text{where each $\mathbf{L}_t$ is a lattice path} \\
    \text{from $\delta(\mathbf{A})$ to $\mathbf{E}$}
    \end{array}
    \right\}.
\end{equation}
For $H = \Sp_{2k}$ or $\O_k$ we will have $\mathcal{F} = \coprod_{\mathbf{E} \in \E} \mathcal{F}_{\mathbf{E}}$, but for $H = \GL_k$ it is possible that $\mathcal{F}_{\mathbf{E}} = \mathcal{F}_{\mathbf{E}'}$.
Given $\sigma \in \widehat{H}$, and recalling the tableau set $\mathcal{T}(\sigma)$ from~\eqref{table dets GL Sp}, we will define a map
\begin{equation}
    \label{end map}
    {\rm end} : \mathcal{T}(\sigma) \longrightarrow \E
\end{equation}
(see details for each group $H$ below);
very roughly speaking, ${\rm end}(T)$ is the set of endpoints most closely aligning with the initial column of the tableau $T$.
Then for each $\mathbf{E} \in \E$, we define the fiber
\begin{equation}
    \label{T_E}
    \mathcal{T}_{\mathbf{E}}(\sigma) \coloneqq \{ T \in \mathcal{T}(\sigma) : {\rm end}(T) = \mathbf{E} \}.
\end{equation}
Clearly $\mathcal{T}(\sigma) = \coprod_{\mathbf{E} \in \E} \mathcal{T}_{\mathbf{E}}(\sigma)$, although we may have $\mathcal{T}_{\mathbf{E}}(\sigma) = \varnothing$ for certain $\mathbf{E} \in \E$.

\begin{defi}[Jellyfish]
    \label{def:jellyfish}
    Let $H = \GL_k$, $\O_k$, or $\Sp_{2k}$, and let $\sigma \in \widehat{H}$ as in~\eqref{H hat}.
    A \emph{jellyfish of shape $\sigma$} is an element of the set
    \begin{align*}
        \mathcal{J}(\sigma) &\coloneqq \coprod_{\mathbf{E} \in \E} \mathcal{F}_{\mathbf{E}} \times \mathcal{T}_{\mathbf{E}}(\sigma), 
    \end{align*}
    with $\mathcal{F}_{\mathbf{E}}$ and $\mathcal{T}_{\mathbf{E}}(\sigma)$ as in~\eqref{F_E} and~\eqref{T_E}, and with case-by-case details to be described below.
\end{defi}

From now on, we treat the three classical groups in the order $\Sp_{2k}$, $\GL_k$, $\O_k$.
This is because the combinatorics are simpler for $\Sp_{2k}$ than for $\GL_k$, and because $\O_k$ requires special handling in several respects (see the introduction).
If we were to consider only the polynomial representations of $\GL_k$ (\ie those $U_\sigma$'s such that $\sigma^- = 0$), then the $\GL_k$ case would actually be quite straightforward;
however, allowing for arbitrary rational representations makes the situation surprisingly more delicate.

\subsection*{Details for the symplectic group}

Let $H = \Sp_{2k}$, with $\mathbf{P}$ as in~\eqref{table P fij}.
Thus $\mathbf{P}$ is depicted as a strictly upper triangular staircase with side length $n-1$.
Let
\begin{align*}
    \mathbf{A} &\coloneqq \{ (i,j) \in \mathbf{P} : i + j \leq 2(k + 1) \},\\
    \delta(\mathbf{A}) & \coloneqq \{ (i,j) \in \mathbf{A} : i + j = 2(k+1) \text{ or } j = n \},\\
    \delta(\mathbf{P}) &\coloneqq \{ (i,j) \in \mathbf{P} : j=n \}.
\end{align*}
Note that $\delta$ denotes the eastern boundary.
Below are two examples where $n=11$:
\begin{equation} 
    \label{Sp anatomy} 
    \tikzstyle{dot}=[circle,fill=black, minimum size = 4.5pt, inner sep=0pt]
    \tikzstyle{redglow}=[circle,fill=red, minimum size = 7.5pt, inner sep=0pt]
\begin{tikzpicture}          [scale=.3,baseline=(current bounding box.center),every node/.style={scale=.7}]
\node [redglow] at (9,11) {};
\node [redglow] at (8,10) {};
\node [redglow] at (7,9) {};
\node [redglow] at (6,8) {};
\node [scale=1.3, red] at (4,6) {$\delta(\mathbf{A})$};
\node at (1,12) {};
\foreach \x in {2,...,11}{\foreach \y in {\x,...,11}{\node [dot] at (13-\x,\y) {};}}
\draw[densely dotted, thick] (1.5,11.5) --++ (8,0) -- ++(0,-1) -- ++(-1,0) -- ++(0,-1) -- ++(-1,0) -- ++(0,-1) -- ++(-1,0) -- ++(0,-1) -- ++(-2,0) -- ++(0,1) -- ++(-1,0) -- ++(0,1) -- ++(-1,0) -- ++(0,1) -- ++(-1,0) -- cycle;
\draw[densely dotted, thick, cyan]
(10.5,11.5) -- ++ (1,0) -- ++(0,-10) -- ++(-1,0) -- cycle;
\node [scale=1.3,cyan] at (13, 5) {$\delta(\mathbf{P})$};
\node[scale=1.3] at (2,9) {$\mathbf{A}$};
\node[scale=1.3] at (6,1) {$k=4$};
\end{tikzpicture}
\qquad\qquad\qquad\qquad
\begin{tikzpicture}          [scale=.3,baseline=(current bounding box.center),every node/.style={scale=.7}]
\node [redglow] at (11,11) {};
\node [redglow] at (11,10) {};
\node [redglow] at (11,9) {};
\node [redglow] at (11,8) {};
\node [redglow] at (11,7) {};
\node [redglow] at (10,6) {};
\node [redglow] at (9,5) {};
\node [scale=1.3, red] at (7,3) {$\delta(\mathbf{A})$};
\node at (1,12) {};
\foreach \x in {2,...,11}{\foreach \y in {\x,...,11}{\node [dot] at (13-\x,\y) {};}}
\draw[densely dotted, thick] (1.5,11.5) --++ (10,0) -- ++(0,-5) -- ++(-1,0) -- ++(0,-1) -- ++(-1,0) -- ++(0,-1) -- ++(-2,0) -- ++(0,1) -- ++(-1,0) -- ++(0,1) -- ++(-1,0) -- ++(0,1) -- ++(-1,0) -- ++(0,1) -- ++(-1,0) -- ++(0,1) -- ++(-1,0) -- ++(0,1) -- ++(-1,0) -- cycle;
\draw[densely dotted, thick, cyan]
(10.5,11.5) -- ++ (1,0) -- ++(0,-10) -- ++(-1,0) -- cycle;
\node [scale=1.3,cyan] at (13, 5) {$\delta(\mathbf{P})$};
\node[scale=1.3] at (2,9) {$\mathbf{A}$};
\node[scale=1.3] at (6,1) {$k=7$};
\end{tikzpicture}
\end{equation}
Explicitly, we have 
\begin{equation}
    \label{delta A Sp}
    \delta(\mathbf{A}) = \{(t, b_t) : 1 \leq t \leq k \},
\end{equation}
where
\[
b_t 
= \begin{cases}
    n, & t < 2(k+1) - n,\\
    2(k+1)-t & \text{otherwise}.
\end{cases}
\]
Referring to~\eqref{F}, we observe that in the $\Sp_{2k}$ setting, each $\F \in \mathcal{F}$ admits a unique decomposition $\F = (\mathbf{A} \backslash \delta(\mathbf{A})) \sqcup \mathbf{L}_1 \sqcup \cdots \sqcup \mathbf{L}_k$, where each $\mathbf{L}_t$ is a southeast lattice path starting at $(t, b_t)$ and ending in $\delta(\mathbf{P})$.
A point $(i,j) \in \mathbf{L}_t$ is said to be a \emph{corner} of $\F$ if
\begin{equation}
\label{corner Sp}
    \{(i,j-1), \: (i+1, j) \} \subseteq \mathbf{L}_t \text{ and } \{(i+\ell, j-\ell) \in \mathbf{P} : \ell \geq 1 \} \not\subseteq \F.
\end{equation}
In other words, a corner is an east-to-south turn, such that the string of points directly southwest of it contains at least one point not in $\F$.
In~\eqref{Sp F} below, we show an element $\F \in \mathcal{F}$ for each of the two examples in~\eqref{Sp anatomy}.
We shade the points in $\F$, and we draw squares to indicate the elements of $\cor(\F)$:
\begin{equation} 
    \label{Sp F} 
    \tikzstyle{dot}=[circle,fill=black, minimum size = 4.5pt, inner sep=0pt]
    \tikzstyle{corner}=[rectangle,draw=black,thin, minimum size = 8pt, inner sep=2pt]
\begin{tikzpicture}          [scale=.3,baseline=(current bounding box.center),every node/.style={scale=.7}]
\draw [white,fill=lightgray] (1.5,11.5) -- ++ (0,-1) -- ++ (1,0) -- ++ (0,-1) -- ++ (1,0) -- ++ (0,-1) -- ++ (1,0) -- ++ (0,-1) -- ++ (1,0) -- ++ (0,1)  -- ++ (1,0) -- ++ (0,1)  -- ++ (1,0) -- ++ (0,1) -- ++ (1,0) -- ++ (0,1) -- cycle;
\draw[line width=3pt, lightgray] (9,11) -- ++(1,0) node [corner] {} -- ++(0,-1) -- ++(1,0) (8,10) -- ++(0,-1) -- ++(1,0) -- ++(0,-1) -- ++(2,0) (7,9) -- ++(0,-1) -- ++(1,0) -- ++(0,-1) -- ++(2,0) node [corner] {} -- ++(0,-1) -- ++(1,0) node [corner] {} -- ++(0,-1) (6,8) -- ++(0,-1) -- ++(1,0) -- ++(0,-1) -- ++(2,0) node [corner] {} -- ++(0,-2) -- ++(2,0);
\foreach \x in {2,...,11}{\foreach \y in {\x,...,11}{\node [dot] at (13-\x,\y) {};}}
\node[scale=1.3] at (6,2) {$k=4$};
\draw [densely dotted] 
(1.5, 11.5) -- ++(10,0) -- ++(0,-10) -- ++(-1,0) -- ++(0,1) -- ++(-1,0) -- ++(0,1) -- ++(-1,0) -- ++(0,1) -- ++(-1,0) -- ++(0,1) -- ++(-1,0) -- ++(0,1) -- ++(-1,0) -- ++(0,1) -- ++(-1,0) -- ++(0,1) -- ++(-1,0) -- ++(0,1) -- ++(-1,0) -- ++(0,1) -- ++(-1,0) -- ++(0,1);
\end{tikzpicture}
\qquad\qquad\qquad\qquad
\begin{tikzpicture}          [scale=.3,baseline=(current bounding box.center),every node/.style={scale=.7}]
\draw[white, fill=lightgray] (1.5,11.5) --++ (9,0) -- ++(0,-5) -- ++(-1,0) -- ++(0,-1) -- ++(-1,0) -- ++(0,-1) -- ++(-1,0) -- ++(0,1) -- ++(-1,0) -- ++(0,1) -- ++(-1,0) -- ++(0,1) -- ++(-1,0) -- ++(0,1) -- ++(-1,0) -- ++(0,1) -- ++(-1,0) -- ++(0,1) -- ++(-1,0) -- cycle;
\draw[line width=6pt, lightgray, line cap=round] (11,11) -- ++(0,0)
(11,10) -- ++(0,0)
(11,9) -- ++(0,0)
(11,8) -- ++(0,0);
\draw[line width=3pt, lightgray] (11,7) -- ++(0,-1)
(10,6) -- ++(0,-1) -- ++(1,0)
(9,5) -- ++(0,-1) -- ++(2,0) node [corner] {} -- ++(0,-2);
\foreach \x in {2,...,11}{\foreach \y in {\x,...,11}{\node [dot] at (13-\x,\y) {};}}
\node[scale=1.3] at (6,2) {$k=7$};
\draw [densely dotted] 
(1.5, 11.5) -- ++(10,0) -- ++(0,-10) -- ++(-1,0) -- ++(0,1) -- ++(-1,0) -- ++(0,1) -- ++(-1,0) -- ++(0,1) -- ++(-1,0) -- ++(0,1) -- ++(-1,0) -- ++(0,1) -- ++(-1,0) -- ++(0,1) -- ++(-1,0) -- ++(0,1) -- ++(-1,0) -- ++(0,1) -- ++(-1,0) -- ++(0,1) -- ++(-1,0) -- ++(0,1);
\end{tikzpicture}
\end{equation}

Observe that, due to the nonintersecting property of the lattice paths, for any set of endpoints $\mathbf{E} = \{(i_t, n) : 1 \leq t \leq k \} \in \E$ where the $i_t$'s are listed in increasing order, the maximum attainable value $\hat{\imath}_t$ for $i_t$ is given by
\begin{equation}
    \label{i hat t}
   \hat{\imath}_t = \begin{cases}
    t, & t < 2(k+1) - n,\\
    n - 2(k-t) - 1 & \text{otherwise}.
\end{cases}
\end{equation}
The topmost case in~\eqref{i hat t} corresponds to those $t$'s (whenever $k$ is sufficiently large) for which $\mathbf{L}_t$ is necessarily the singleton $\{(t, b_t)\}$; for example, in the right-hand example in~\eqref{Sp F}, this occurs for $t < 2(k+1) - n = 5$, and indeed the four topmost lattice paths are singletons.
We now define the ``end'' map~\eqref{end map} as follows, for $T \in \mathcal{T}(\sigma) = \SSYT(\sigma, n)$:
\begin{equation}
    \label{end T details Sp}
    {\rm end}(T) \coloneqq \{ (i_t, n) : 1 \leq t \leq k\}, \quad \text{where} \quad i_t = \min\{ T_{t,1}, \: \hat{\imath}_t \}.
\end{equation}
Since $T_{t,1}$ is undefined for $t > \ell(\sigma)$, in this range the equation~\eqref{end T details Sp} reduces to $i_t = \hat{\imath}_t$.
As an example, in~\eqref{Sp end} below, for a given tableau $T$ we indicate ${\rm end}(T)$ by highlighting its elements in red inside $\delta(\mathbf{P})$.
(See also Example~\ref{ex:GL toy}.)
\begin{equation} 
    \label{Sp end} 
    \tikzstyle{dot}=[circle,fill=black, minimum size = 4.5pt, inner sep=0pt]
    \tikzstyle{redglow}=[circle,fill=red, minimum size = 7.5pt, inner sep=0pt]
\begin{tikzpicture}          [scale=.3,baseline=(current bounding box.center),every node/.style={scale=.7}]
\node [redglow] at (0,2) {};
\node [redglow] (E3) at (0,4) {};
\node [redglow] (E2) at (0,6) {};
\node [redglow] (E1) at (0,10) {};
\foreach \y in {2,...,11}{\node [dot] at (0,\y) {};}
\draw[densely dotted, thick, cyan]
(-.5,11.5) -- ++ (1,0) -- ++(0,-10) -- ++(-1,0) -- cycle;
\node [scale=1.3, left] at (-4,6) {$T = \ytableaushort{2355,89,9}$};
\node[left] at (-.5,8) {$\hat{\imath}_1 \rightarrow$} ;
\node[left] at (-.5,6) {$\hat{\imath}_2 \rightarrow$} ;
\node[left] at (-.5,4) {$\hat{\imath}_3 \rightarrow$} ;
\node[left] at (-.5,2) {$\hat{\imath}_4 \rightarrow$} ;
\node[right] (T1) at (2,10) {$\ytableaushort{2} \leftarrow T_{1,1}$};
\node[right] (T2) at (2,4) {$\ytableaushort{8} \leftarrow T_{2,1}$};
\node[right] (T3) at (2,3) {$\ytableaushort{9} \leftarrow T_{3,1}$};
\draw [ultra thick, lightgray] (T1.west) -- (E1);
\draw [ultra thick, lightgray] (T2.west) -- (E2);
\draw [ultra thick, lightgray] (T3.west) -- (E3);
\node[scale=1.3] at (0,0) {$k=4$};
\node[scale=1.3, red] at (0,13) {${\rm end}(T)$};
\end{tikzpicture}
\qquad\qquad\qquad\qquad\begin{tikzpicture}          [scale=.3,baseline=(current bounding box.center),every node/.style={scale=.7}]
\node [redglow] (E1) at (0,11) {};
\node [redglow] (E2) at (0,10) {};
\node [redglow] (E3) at (0,9) {};
\node [redglow] (E4) at (0,8) {};
\node [redglow] (E5) at (0,6) {};
\node [redglow] (E6) at (0,4) {};
\node [redglow] (E7) at (0,3) {};
\foreach \y in {2,...,11}{\node [dot] at (0,\y) {};}
\draw[densely dotted, thick, cyan]
(-.5,11.5) -- ++ (1,0) -- ++(0,-10) -- ++(-1,0) -- cycle;
\node [scale=1.3, left] at (-4,6) {$T = \ytableaushort{235,468,57,68,79,8,9}$};
\node[left] at (-.5,11) {$\hat{\imath}_1 \rightarrow$} ;
\node[left] at (-.5,10) {$\hat{\imath}_2 \rightarrow$} ;
\node[left] at (-.5,9) {$\hat{\imath}_3 \rightarrow$} ;
\node[left] at (-.5,8) {$\hat{\imath}_4 \rightarrow$} ;
\node[left] at (-.5,6) {$\hat{\imath}_5 \rightarrow$} ;
\node[left] at (-.5,4) {$\hat{\imath}_6 \rightarrow$} ;
\node[left] at (-.5,2) {$\hat{\imath}_7 \rightarrow$} ;
\node[right] (T1) at (2,10) {$\ytableaushort{2} \leftarrow T_{1,1}$};
\node[right] (T2) at (2,8) {$\ytableaushort{4} \leftarrow T_{2,1}$};
\node[right] (T3) at (2,7) {$\ytableaushort{5} \leftarrow T_{3,1}$};
\node[right] (T4) at (2,6) {$\ytableaushort{6} \leftarrow T_{4,1}$};
\node[right] (T5) at (2,5) {$\ytableaushort{7} \leftarrow T_{5,1}$};
\node[right] (T6) at (2,4) {$\ytableaushort{8} \leftarrow T_{6,1}$};
\node[right] (T7) at (2,3) {$\ytableaushort{9} \leftarrow T_{7,1}$};
\draw [ultra thick, lightgray] (T1.west) -- (E1);
\draw [ultra thick, lightgray] (T2.west) -- (E2);
\draw [ultra thick, lightgray] (T3.west) -- (E3);
\draw [ultra thick, lightgray] (T4.west) -- (E4);
\draw [ultra thick, lightgray] (T5.west) -- (E5);
\draw [ultra thick, lightgray] (T6.west) -- (E6);
\draw [ultra thick, lightgray] (T7.west) -- (E7);
\node[scale=1.3] at (0,0) {$k=7$};
\node[scale=1.3, red] at (0,13) {${\rm end}(T)$};
\end{tikzpicture}
\end{equation}

\subsection*{Details for the general linear group}

Let $H = \GL_k$, with $\mathbf{P}$ as in~\eqref{table P fij}.
Thus $\mathbf{P}$ is depicted as a $p \times q$ rectangular grid.
Let
\begin{align*}
    \mathbf{A} &\coloneqq \{(i,j) \in \mathbf{P} : i + j \leq k + 1\},\\
    \delta(\mathbf{A}) &\coloneqq \{(i,j) \in \mathbf{A} : i + j = k + 1 \text{ or } i = p \text{ or } j = q \},\\
    \delta(\mathbf{P}) & \coloneqq \{(i,j) \in \mathbf{P} : i = p \text{ or } j = q \}.
\end{align*}
Note that $\delta$ denotes the southeastern boundary.
Below are three examples, where $p=7$ and $q=10$:
\begin{equation*} 
    \label{GL anatomy} 
    \tikzstyle{dot}=[circle,fill=black, minimum size = 4.5pt, inner sep=0pt]
    \tikzstyle{redglow}=[circle,fill=red, minimum size = 7.5pt, inner sep=0pt]
\begin{tikzpicture}          [scale=.3,baseline=(current bounding box.center),every node/.style={scale=.7}]
\node [redglow] at (4,7) {};
\node [redglow] at (3,6) {};
\node [redglow] at (2,5) {};
\node [redglow] at (1,4) {};
\draw[densely dotted, thick]
(.5,7.5) -- ++(4,0) --++(0,-1) -- ++(-1,0) --++(0,-1) -- ++(-1,0) --++(0,-1) -- ++(-1,0) --++(0,-1) -- ++(-1,0) -- cycle;
\foreach \x in {1,...,10}{\foreach \y in {1,...,7}{\node [dot] at (\x,\y) {};}}
\draw[densely dotted, thick, cyan]
(9.5,7.5) -- ++ (1,0) -- ++(0,-7) -- ++(-10,0) -- ++(0,1) -- ++(9,0) -- cycle;
\node [right, scale=1.3,cyan] at (10.5, 1) {$\delta(\mathbf{P})$};
\node[above,scale=1.3] at (1,7.5) {$\mathbf{A}$};
\node[above,scale=1.3,red] at (4,7.5) {$\delta(\mathbf{A})$};
\node[below,scale=1.3] at (5.5,0) {$k=4$};
\end{tikzpicture}
\quad
\begin{tikzpicture}          [scale=.3,baseline=(current bounding box.center),every node/.style={scale=.7}]
\node [redglow] at (8,7) {};
\node [redglow] at (7,6) {};
\node [redglow] at (6,5) {};
\node [redglow] at (5,4) {};
\node [redglow] at (4,3) {};
\node [redglow] at (3,2) {};
\node [redglow] at (2,1) {};
\node [redglow] at (1,1) {};
\draw[densely dotted, thick]
(.5,7.5) -- ++(8,0) --++(0,-1) -- ++(-1,0) --++(0,-1) -- ++(-1,0) --++(0,-1) -- ++(-1,0) --++(0,-1) -- ++(-1,0) --++(0,-1) -- ++(-1,0) --++(0,-1) -- ++(-1,0) --++(0,-1) -- ++(-2,0) -- cycle;
\foreach \x in {1,...,10}{\foreach \y in {1,...,7}{\node [dot] at (\x,\y) {};}}
\draw[densely dotted, thick, cyan]
(9.5,7.5) -- ++ (1,0) -- ++(0,-7) -- ++(-10,0) -- ++(0,1) -- ++(9,0) -- cycle;
\node [right, scale=1.3,cyan] at (10.5, 1) {$\delta(\mathbf{P})$};
\node[above,scale=1.3] at (1,7.5) {$\mathbf{A}$};
\node[above,scale=1.3,red] at (8,7.5) {$\delta(\mathbf{A})$};
\node[below,scale=1.3] at (5.5,0) {$k=8$};
\end{tikzpicture}
\quad
\begin{tikzpicture}          [scale=.3,baseline=(current bounding box.center),every node/.style={scale=.7}]
\node [redglow] at (10,7) {};
\node [redglow] at (10,6) {};
\node [redglow] at (10,5) {};
\node [redglow] at (9,4) {};
\node [redglow] at (8,3) {};
\node [redglow] at (7,2) {};
\node [redglow] at (6,1) {};
\node [redglow] at (5,1) {};
\node [redglow] at (4,1) {};
\node [redglow] at (3,1) {};
\node [redglow] at (2,1) {};
\node [redglow] at (1,1) {};
\draw[densely dotted, thick]
(.5,7.5) -- ++(10,0) --++(0,-3) -- ++(-1,0) --++(0,-1) -- ++(-1,0) --++(0,-1) -- ++(-1,0) --++(0,-1) -- ++(-1,0) --++(0,-1) -- ++(-6,0) -- cycle;
\foreach \x in {1,...,10}{\foreach \y in {1,...,7}{\node [dot] at (\x,\y) {};}}
\draw[densely dotted, thick, cyan]
(9.5,7.5) -- ++ (1,0) -- ++(0,-7) -- ++(-10,0) -- ++(0,1) -- ++(9,0) -- cycle;
\node [right, scale=1.3,cyan] at (10.5, 1) {$\delta(\mathbf{P})$};
\node[above,scale=1.3] at (1,7.5) {$\mathbf{A}$};
\node[above,scale=1.3,red] at (10,7.5) {$\delta(\mathbf{A})$};
\node[below,scale=1.3] at (5.5,0) {$k=12$};
\end{tikzpicture}
\end{equation*}

\noindent Given $\sigma \in \widehat{H}$, it will be useful to define
\begin{align}
    \label{k+ k-}
    \begin{split}
        k^+ & \coloneqq \max\{ \ell(\sigma^+), \: k - p \},\\
        k^- & \coloneqq k - k^+,
    \end{split}
\end{align}
so that $k = k^+ + k^-$.
The purpose of~\eqref{k+ k-} is to divide $\delta(\mathbf{A})$ into two sides:
due to the upcoming definition of ${\rm end}(T)$ in~\eqref{end T GL details}, it will turn out that $(\F, T) \in \mathcal{J}(\sigma)$ only if the $k^+$ many southernmost lattice paths in $\F$ have endpoints along the southern edge of $\delta(\mathbf{P})$, and the remaining $k^-$ many easternmost lattice paths have endpoints along the eastern edge of $\delta(\mathbf{P})$.
In particular, if $k-p$ is positive, then it equals the number of starting points along the southern edge of $\delta(\mathbf{A}) \cap \delta(\mathbf{P})$ that are necessarily their own endpoints.
Explicitly, then, we have
\[
\delta(\mathbf{A}) = \{(a_u, u) : 1 \leq u \leq k^+ \} \sqcup \{(t, b_t) : 1 \leq t \leq k^- \},
\]
where
\[
    a_u = 
    \begin{cases}
        p, & u \leq k-p,\\
        k-u+1 & \text{otherwise},
    \end{cases} \qquad\text{and}\qquad
    b_t = 
    \begin{cases}
        q, & t \leq k-q,\\
        k-t+1 & \text{otherwise}.
    \end{cases}
\]
By~\eqref{F}, for each $\F \in \mathcal{F}$ we have $\F = (\mathbf{A} \backslash \delta(\mathbf{A})) \sqcup \mathbf{L}$, for a (not necessarily unique) disjoint union
\[
\mathbf{L} = \mathbf{L}^+_1 \sqcup \cdots \sqcup \mathbf{L}^+_{k^+} \sqcup \mathbf{L}^-_1 \sqcup \cdots \sqcup \mathbf{L}^-_{k^-}
\]
where each $\mathbf{L}^+_u$ is a southeast lattice path from $(a_u, u)$ to $\delta(\mathbf{P})$, and $\mathbf{L}^-_t$ is a southeast lattice path from $(t,b_t)$ to $\delta(\mathbf{P})$.
For each of the $k$ lattice paths, a \emph{corner} is defined as it was for $H = \Sp_{2k}$ in~\eqref{corner Sp}.
In the diagrams below, we show an element $\F \in \mathcal{F}$ for each of our previous examples:
\begin{equation*} 
    \label{GL F} 
    \tikzstyle{dot}=[circle,fill=black, minimum size = 4.5pt, inner sep=0pt]
    \tikzstyle{corner}=[rectangle,draw=black,thin, minimum size = 8pt, inner sep=2pt]
\begin{tikzpicture}          [scale=.3,baseline=(current bounding box.center),every node/.style={scale=.7}]
\draw[white, fill=lightgray]
(.5,7.5) -- ++(3,0) --++(0,-1) -- ++(-1,0) --++(0,-1) -- ++(-1,0) --++(0,-1) -- ++(-1,0) -- cycle;
\draw[line width=3pt, lightgray, line cap=round] 
(1,4) --++(0,-1) --++(1,0) node [corner] {} -- ++(0,-2) -- ++(2,0)
(2,5) -- ++(1,0) node [corner] {} -- ++(0,-3) -- ++(2,0) -- ++(0,-1) -- ++(2,0)
(3,6) -- ++(2,0) node [corner] {} -- ++(0,-1) -- ++(2,0) node [corner] {} -- ++(0,-2) -- ++(1,0) node [corner] {} -- ++(0,-1) -- ++(1,0) node [corner] {} -- ++(0,-1)
(4,7) -- ++(4,0) node [corner] {} -- ++(0,-1) -- ++(1,0) node[corner] {} --++(0,-1) -- ++ (1,0);
\foreach \x in {1,...,10}{\foreach \y in {1,...,7}{\node [dot] at (\x,\y) {};}}

\node[below,scale=1.3] at (5.5,0) {$k=4$};
\draw [densely dotted] (.5,7.5) rectangle (10.5,.5);
\end{tikzpicture}
\qquad\qquad
\begin{tikzpicture}          [scale=.3,baseline=(current bounding box.center),every node/.style={scale=.7}]
\draw[white, fill=lightgray]
(.5,7.5) -- ++(7,0) --++(0,-1) -- ++(-1,0) --++(0,-1) -- ++(-1,0)--++(0,-1) -- ++(-1,0)--++(0,-1) -- ++(-1,0)--++(0,-1) -- ++(-1,0)--++(0,-1) -- ++(-2,0) -- cycle;
\draw[line width=6pt, lightgray, line cap=round]
(1,1) --++(0,0);
\draw[line width=3pt, lightgray, line cap=round] 
(2,1) --++(2,0)
(3,2) -- ++(2,0) -- ++(0,-1) -- ++(2,0)
(4,3) -- ++(2,0) -- ++(0,-1) -- ++(3,0) node [corner] {} -- ++(0,-1) -- ++(1,0)
(5,4) -- ++(3,0) node [corner] {} -- ++(0,-1) -- ++(2,0) node[corner] {} --++(0,-1)
(6,5) -- ++(4,0)
(7,6) -- ++(3,0)
(8,7) -- ++(2,0);
\foreach \x in {1,...,10}{\foreach \y in {1,...,7}{\node [dot] at (\x,\y) {};}}

\node[below,scale=1.3] at (5.5,0) {$k=8$};
\draw [densely dotted] (.5,7.5) rectangle (10.5,.5);
\end{tikzpicture}
\qquad\qquad
\begin{tikzpicture}          [scale=.3,baseline=(current bounding box.center),every node/.style={scale=.7}]
\draw[white, fill=lightgray]
(.5,7.5) -- ++(9,0) --++(0,-3) -- ++(-1,0) --++(0,-1) -- ++(-1,0)--++(0,-1) -- ++(-1,0)--++(0,-1) -- ++(-6,0) -- cycle;
\draw[line width=6pt, lightgray, line cap=round]
(1,1) --++(0,0)
(2,1) --++(0,0)
(3,1) --++(0,0)
(4,1) --++(0,0)
(5,1) --++(0,0)
(10,7) --++(0,0)
(10,6) --++(0,0)
(10,5) --++(0,0);
\draw[line width=3pt, lightgray, line cap=round] 
(6,1) -- ++(1,0)
(7,2) -- ++(1,0) --++ (0,-1)
(8,3) --++(2,0) node [corner] {} --++(0,-1)
(9,4) --++(1,0);
\foreach \x in {1,...,10}{\foreach \y in {1,...,7}{\node [dot] at (\x,\y) {};}}
\node[below,scale=1.3] at (5.5,0) {$k=12$};
\draw [densely dotted] (.5,7.5) rectangle (10.5,.5);
\end{tikzpicture}
\end{equation*}

\noindent In the $k=8$ example above, note that $\F$ can be decomposed into a different family of lattice paths:
specifically, we could extend the fourth path (counting from the north) all the way to the southeast corner of $\mathbf{P}$, and shorten the fifth path accordingly.
Nonetheless, in general, $\cor(\F)$ is well defined regardless of the lattice path decomposition one chooses.

Observe that, due to the nonintersecting condition on the lattice paths $\mathbf{L}^+_u$ and $\mathbf{L}^-_t$, each set of endpoints $\mathbf{E} \in \E$ has the following property: 
for all $1 \leq \ell \leq \min\{k,p,q\}$, the $\ell \times \ell$ square in the lower-right corner of $\mathbf{P}$ contains at most $\ell$ many points in $\mathbf{E}$.
That is, for $\mathbf{E} = \{(p,j_u) : 1 \leq u \leq k^+\} \sqcup \{(i_t, q) : 1 \leq t \leq k^-\}$, we have the tight inequalities
\begin{equation} 
    \label{endpoint spacing GL}
    \begin{array}{c} 
    \#\{ u : j-u > q - \ell \} + \#\{ t: i_t > p - \ell \} \leq \ell, \\[1ex]
    \text{for all } 1 \leq \ell \leq \#\{(i,j) \in \mathbf{P} : i+j = k+1 \}.
    \end{array}
\end{equation}
Consequently, upon fixing the $j_u$'s, the maximum attainable value $\hat{\imath}_t$ of $i_t$ (for all $1 \leq t \leq k^-$) is given by
\[
\hat{\imath}_t = t \text{ if } t < k-q,
\]
while if $t \geq k-q$, then the $\hat{\imath}_t$'s are the largest elements $\hat{\imath}_{\min\{1, \: k-q\}} < \cdots < \hat{\imath}_{k^-}$ of the set
\begin{equation}
    \label{i hat GL}
    \Big\{ i \in [p] : q-p+i \notin \{j_u : 1 \leq u \leq k^+ \} \Big\}.
\end{equation}
In other words, for $\max\{1, k-q\} \leq t \leq k^-$, the $\hat{\imath}_t$'s are the distinct elements chosen from $[p]$ to be as large as possible without violating~\eqref{endpoint spacing GL}; see the example~\eqref{GL end} below.
As a result, note that the sets
\[
 Q \coloneqq \{q-j_u : 1 \leq u \leq k^+ \} \text{ and }P \coloneqq \{p-\hat{\imath}_t : \min\{1, \: k-q\} \leq t \leq k^- \}
\]
are disjoint, and
\begin{equation}
    \label{P Q}
    Q \sqcup P \supseteq \{0, 1, \ldots, p-\hat{\imath}_{\max\{1, \: k-q\}} \}.
\end{equation}
In light of this, we define the ``end'' map~\eqref{end map} as follows, for $T = (T^+, T^-) \in \mathcal{T}(\sigma) = \SSYT(\sigma^+, q) \times \SSYT(\sigma^-, p)$:
\[
    {\rm end}(T) \coloneqq \Big\{ (p, j_u) : 1 \leq u \leq k^+ \Big\} \sqcup \Big\{ (i_t, q) : 1 \leq t \leq k^- \Big\},
\]
where
\begin{equation}
    \label{end T GL details}
    j_u = \begin{cases}
        u, & u \leq k-p,\\
        T^+_{u,1} & \text{otherwise},
    \end{cases} \qquad \text{and} \qquad 
    i_t = \min\{ T^-_{t,1}, \: \hat{\imath}_t \}.
\end{equation}
Since $T^-_{t,1}$ is undefined for $t > \ell(\sigma^-)$, in this range the right-hand side of~\eqref{end T GL details} reduces to $i_t = \hat{\imath}_t$.
As an example below, we indicate ${\rm end}(T)$ by highlighting its elements in red inside $\delta(\mathbf{P})$:
\begin{equation} 
    \label{GL end}
    \tikzstyle{dot}=[circle,fill=black, minimum size = 4.5pt, inner sep=0pt]
    \tikzstyle{redglow}=[circle,fill=red, minimum size = 7.5pt, inner sep=0pt]
\begin{tikzpicture}          [scale=.3,baseline=(current bounding box.center),every node/.style={scale=.7}]
\node [redglow] (D1) at (1,1) {};
\node [redglow] (D2) at (4,1) {};
\node [redglow] (D3) at (7,1) {};
\node [redglow] (D4) at (9,1) {};
\node [redglow] (D5) at (10,1) {};
\node [redglow] (E1) at (10,7) {};
\node [redglow] (E2) at (10,5) {};
\node [redglow] (E3) at (10,3) {};
\foreach \x in {1,...,10}{\node [dot] at (\x,1) {};}
\foreach \y in {2,...,7}{\node [dot] at (10,\y) {};}
\draw[densely dotted, thick, cyan]
(9.5,7.5) -- ++ (1,0) -- ++(0,-7) -- ++(-10,0) -- ++(0,1) -- ++(9,0) -- cycle;
\draw [densely dashed] (8,1) -- ++(0,1.5)
(6,1) -- ++(0,4) -- ++(1.25,0)
(5,1) -- ++(0,5) -- ++(2.25,0);
\node [scale=1.3, left] at (0,4) {$\begin{aligned}T &= \left(\:\ytableaushort{348,469,78,99,{10}}, \: \ytableaushort{125,46,7,\none,\none}\:\right) \\[2ex] k &= 8 \\ k^+ &= 5 \\ k^- &= 3 \end{aligned}$};
\node[left] at (9.5,6) {$\hat{\imath}_1 \rightarrow$} ;
\node[left] at (9.5,5) {$\hat{\imath}_2 \rightarrow$} ;
\node[left] at (9.5,3) {$\hat{\imath}_3 \rightarrow$} ;
\node[right] (T1) at (12,7) {$\ytableaushort{1} \leftarrow T^-_{1,1}$};
\node[right] (T2) at (12,4) {$\ytableaushort{4} \leftarrow T^-_{2,1}$};
\node[right] (T3) at (12,1) {$\ytableaushort{7} \leftarrow T^-_{3,1}$};
\draw [ultra thick, lightgray] (T1.west) -- (E1);
\draw [ultra thick, lightgray] (T2.west) -- (E2);
\draw [ultra thick, lightgray] (T3.west) -- (E3);
\node[below,align=center] (U1) at (3,-1) {$\ytableaushort{3}$\\$\uparrow$\\$T^+_{1,1}$};
\node[below] (U2) at (4,-1) {$\ytableaushort{4}$};
\node[below] (U3) at (7,-1) {$\ytableaushort{7}$};
\node[below] (U4) at (9,-1) {$\ytableaushort{9}$};
\node[below,align=center] (U5) at (10,-1) {$\ytableaushort{{10}}$\\$\uparrow$\\$T^+_{5,1}$};
\draw [loosely dotted, thick] (4,-4) -- (9,-4);
\draw [ultra thick, lightgray] (U1.north) -- (D1);
\draw [ultra thick, lightgray] (U2.north) -- (D2);
\draw [ultra thick, lightgray] (U3.north) -- (D3);
\draw [ultra thick, lightgray] (U4.north) -- (D4);
\draw [ultra thick, lightgray] (U5.north) -- (D5);
\node[scale=1.3, red] at (10,9) {${\rm end}(T)$};
\end{tikzpicture}
\end{equation}

\noindent The dashed lines in~\eqref{GL end} are meant to illustrate the definition of the $\hat{\imath}_t$'s in~\eqref{i hat GL}.
Specifically, on the southern edge of $\delta(\mathbf{P})$, take the easternmost points that are \emph{not} in the set $\{(p, j_u) : 1 \leq u \leq k^+\}$; then the $\hat{\imath}_t$'s are their corresponding points along the eastern edge of $\delta(\mathbf{P})$.
See also Example~\ref{ex:GL toy}.

\subsection*{Details for the orthogonal group}

Let $H = \O_k$, with $\mathbf{P}$ as in~\eqref{table P fij}.
Thus $\mathbf{P}$ is depicted as an upper triangular staircase with side length $n$.
Let
\begin{align*}
\mathbf{A} = \delta(\mathbf{A}) &\coloneqq \{(i,j) \in \mathbf{P} : i=j \},\\
\delta(\mathbf{P}) & \coloneqq \{ (i,j) \in \mathbf{P} : j=n \}.
\end{align*}
Note that $\delta$ denotes the eastern boundary.
Note also that, unlike the $\GL_k$ and $\Sp_{2k}$ cases above, $\mathbf{A}$ is defined independently of $k$, and we have $\# \delta(\mathbf{A}) = n$ rather than $k$.
By~\eqref{F}, since $\mathbf{A} = \delta(\mathbf{A})$, for each $\F \in \mathcal{F}$ we have a unique disjoint union $\F = \mathbf{L}_1 \sqcup \cdots \sqcup \mathbf{L}_k$ of northeast lattice paths from $\delta(\mathbf{A})$ to $\delta(\mathbf{P})$, such that each $\mathbf{L}_t$ has starting point $(a_t,a_t)$, for some $1 \leq a_1 < \cdots < a_k \leq n$.
For each $1 \leq t \leq k$, a point $(i,j) \in \mathbf{L}_t$ is said to be a \emph{corner} of $\F$ if the following is true:
\begin{equation}
\label{corner O}
    (i-1,j) \in \mathbf{L}_t \text{ and } (i+1,j) \notin \mathbf{L}_t.
\end{equation}
In other words, a corner either is a point at which a lattice path turns from east to north, or is the starting point of a lattice path whose first step is north.

For $1 \leq t \leq k$, set
\[
\hat{\imath}_t \coloneqq k-t+1.
\]
Recall that we state our main result for $H = \O_k$ only in the case where $\sigma = (1^m)$ for some $0 \leq m \leq k$;
for this reason, in~\eqref{phi O}, we made the identification $\mathcal{T}(\sigma) = \binom{[n]}{m}$.
For $T \in \binom{[n]}{m}$  with elements labeled in decreasing order $T_1 > \cdots > T_m$, define
\begin{equation}
    \label{end T details O}
    {\rm end}(T) \coloneqq \{(i_t, n) : 1 \leq t \leq k\}, \quad \text{where } i_t = \max\{ T_t, \hat{\imath}_t\}.
\end{equation}
Note that for $t > m$ this reduces to $i_t = \hat{\imath}_t$.
Below is an example where $n=6$, $k=4$, and $m=3$:
\begin{equation*} 
    \tikzstyle{dot}=[circle,fill=black, minimum size = 4.5pt, inner sep=0pt]
    \tikzstyle{redglow}=[circle,fill=red, minimum size = 7.5pt, inner sep=0pt]
    \tikzstyle{corner}=[rectangle,draw=black,thin, minimum size = 8pt, inner sep=2pt]
\begin{tikzpicture}          [scale=.3,baseline=(current bounding box.south),every node/.style={scale=.7}]
\foreach \x in {2,...,7}{\node [redglow] at (9-\x,\x) {};}
\node [left,scale=1.3] at (4.5,3) {$\mathbf{A} = \textcolor{red}{\delta(\mathbf{A})}$};
\foreach \x in {2,...,7}{\foreach \y in {\x,...,7}{\node [dot] at (9-\x,\y) {};}}
\draw[densely dotted, thick, cyan]
(6.5,7.5) -- ++ (1,0) -- ++(0,-6) -- ++(-1,0) -- cycle;
\node [scale=1.3,cyan] at (9, 5) {$\delta(\mathbf{P})$};
\end{tikzpicture}
\qquad\qquad
\begin{tikzpicture}          [scale=.3,baseline=(current bounding box.south),every node/.style={scale=.7}]
\draw[line width=3pt, lightgray]
(3,6) node [corner] {} -- ++(0,1) -- ++ (4,0)
(4,5) -- ++ (2,0) node [corner] {} -- ++(0,1) -- ++(1,0)
(5,4) -- ++ (2,0)
(7,2) node [corner] {} -- ++(0,1);
\foreach \x in {2,...,7}{\foreach \y in {\x,...,7}{\node [dot] at (9-\x,\y) {};}}
\node [left,scale=1.3,gray] at (4.5,3) {$\mathbf{F} \in \mathcal{F}$};
\draw [densely dotted] 
(1.5, 7.5) -- ++(6,0) -- ++(0,-6) -- ++(-1,0) -- ++(0,1) -- ++(-1,0) -- ++(0,1) -- ++(-1,0) -- ++(0,1) -- ++(-1,0) -- ++(0,1) -- ++(-1,0) -- ++(0,1) -- ++(-1,0) -- ++(0,1);
\end{tikzpicture}
\qquad\qquad\qquad
\begin{tikzpicture}          [scale=.3,baseline=(current bounding box.south),every node/.style={scale=.7}]
\node [redglow] at (0,6) {};
\node [redglow] (E1) at (0,5) {};
\node [redglow] (E2) at (0,3) {};
\node [redglow] (E3) at (0,2) {};
\foreach \y in {1,...,6}{\node [dot] at (0,\y) {};}
\draw[densely dotted, thick, cyan]
(-.5,6.5) -- ++ (1,0) -- ++(0,-6) -- ++(-1,0) -- cycle;
\node [scale=1.3, left] at (-5,4) {$T = \ytableaushort{1,4,5}$};
\node[right] (T1) at (2,6) {$\ytableaushort{1}$};
\node[right] (T2) at (2,3) {$\ytableaushort{4}$};
\node[right] (T3) at (2,2) {$\ytableaushort{5}$};
\draw [ultra thick, lightgray] (T1.west) -- (E1);
\draw [ultra thick, lightgray] (T2.west) -- (E2);
\draw [ultra thick, lightgray] (T3.west) -- (E3);
\node[scale=1.3, red] at (0,8) {${\rm end}(T)$};
\node[left] at (-.5,3) {$\hat{\imath}_1 \rightarrow$};
\node[left] at (-.5,4) {$\hat{\imath}_2 \rightarrow$};
\node[left] at (-.5,5) {$\hat{\imath}_3 \rightarrow$};
\node[left] at (-.5,6) {$\hat{\imath}_4 \rightarrow$};
\end{tikzpicture}
\end{equation*}

\subsection*{Main result}

We will prove the following theorem in Section~\ref{sec:proofs}.
For the sake of concreteness, note that we have the following:
\begin{equation}
\label{size P delta P}
\#\mathbf{P} = \begin{cases}
    pq, & H = \GL_k,\\
    \binom{n+1}{2}, & H = \O_k,\\
    \binom{n}{2}, & H = \Sp_{2k},
\end{cases}
\qquad \qquad
\#\delta(\mathbf{P}) = \begin{cases}
    p+q-1, & H = \GL_k,\\
    n, & H = \O_k,\\
    n-1, & H = \Sp_{2k}.
\end{cases}
\end{equation}

\begin{thm}
    \label{thm:Stanley decomps and HS}
    Let $H = \GL_k$, $\O_k$, or $\Sp_{2k}$.
    Let $\sigma \in \widehat{H}$ as in~\eqref{H hat}; but if $H = \O_k$, then let $\sigma = (1^m)$ for some $0 \leq m \leq k$.
    Let $M_\sigma$ be the module of covariants defined in~\eqref{M sigma}, with $f_{ij}$ as in~\eqref{table P fij}.
    
    If $k < \#\delta(\mathbf{P})$, then we have a Stanley decomposition
    \[
    M_\sigma = \bigoplus_{(\F, T) \in \mathcal{J}(\sigma)} \C[ f_{ij} : (i,j) \in \F] \: f_{\cor(\F)} \cdot \phi_T,
    \]
    where $\mathcal{J}(\sigma)$ is the set of jellyfish in Definition~\ref{def:jellyfish}, $f_{\cor(\F)}$ is the product of $f_{ij}$'s defined in~\eqref{f cor}, and $\phi_T : W \longrightarrow U_\sigma$ is the map defined in~\eqref{phi_T GL Sp} or \eqref{phi O}.
    
    If $k \geq \#\delta(\mathbf{P})$, then $M_\sigma$ is a free module over $\C[W]^H$, with Stanley decomposition
    \[
    M_\sigma = \bigoplus_{T \in \mathcal{T}(\sigma)} \C[f_{ij} : (i,j) \in \mathbf{P}]\: \phi_T.
    \]
\end{thm}

\begin{rema}
    In the $k < \#\delta(\mathbf{P})$ case of Theorem~\ref{thm:Stanley decomps and HS} for $H = \GL_k$ or $\Sp_{2k}$, it is still possible that $M_\sigma$ is free.
    This occurs if and only if $\mathcal{F}_{\mathbf{E}} = \{ \mathbf{P} \}$ for all $\mathbf{E} \in \E$, if and only if
    \[
    \begin{cases}
        \ell(\sigma^+) \leq k-p \text{ or } \ell(\sigma^-) \leq k-q, & H = \GL_k,\\
        \ell(\sigma) \leq 2k-n+1, & H = \Sp_{2k}.
    \end{cases}
    \]
\end{rema}

\begin{coro}
    \label{cor:HS}
    Assume the setting of Theorem~\ref{thm:Stanley decomps and HS}.
    For $\mathbf{E} \in \E$, let
    \begin{align*}
        d_{\mathbf{E}} & \coloneqq \textup{cardinality of any (equivalently, every) $\F \in \mathcal{F}_{\mathbf{E}}$},\\
        P_{\mathbf{E}}(t) &\coloneqq \dfrac{\sum_{\F \in \mathcal{F}_{\mathbf{E}}} (t^2)^{\# \cor(\F)}}{(1-t^2)^{d_{\mathbf{E}}}}.
    \end{align*}
    If $k < \#\delta(\mathbf{P})$, then we have the Hilbert--Poincar\'e series   
    \[
    P(M_\sigma; t) = t^{|\sigma|} \sum_{\mathbf{E} \in \E} \# \mathcal{T}_{\mathbf{E}}(\sigma) \cdot  P_{\mathbf{E}}(t).
    \]
    If $k \geq \#\delta(\mathbf{P})$, then we have the Hilbert--Poincar\'e series
    \[
    P(M_\sigma; t) = \frac{\# \mathcal{T}(\sigma) \cdot t^{|\sigma|}}{(1-t^2)^{\#\mathbf{P}}}.
    \]
    
\end{coro}

\begin{proof}
    Suppose $k < \#\delta(\mathbf{P})$.
    By Definition~\ref{def:jellyfish}, we have $\mathcal{J}(\sigma) = \coprod_{\mathbf{E} \in \E} \mathcal{F}_{\mathbf{E}} \times \mathcal{T}_{\mathbf{E}}(\sigma)$.
    Therefore the Stanley decomposition in Theorem~\ref{thm:Stanley decomps and HS} can be expanded as follows:
    \begin{equation}
    \label{triple decomp}
    M_\sigma = \bigoplus_{\mathbf{E} \in \E} \left( \bigoplus_{\F \in \mathcal{F}_{\mathbf{E}}} \: \bigoplus_{T \in \mathcal{T}_{\mathbf{E}}(\sigma)} \C[f_{ij} : (i,j) \in \F] f_{\cor(\F)} \cdot \phi_T \right).
    \end{equation}
    We claim that for all $\mathbf{E} \in \E$, the elements $\F \in \mathcal{F}_{\mathbf{E}}$ all have the same cardinality $d_{\mathbf{E}}$.
    For $H = \GL_k$ and $\Sp_{2k}$, this follows from the fact that the length of a southeast lattice path is determined by its starting point and endpoint.
    For $H = \O_k$, where the starting points vary along the diagonal $\delta(\mathbf{A})$, we observe that for a fixed endpoint $(i, n)$, its distance to any possible starting point $(a,a)$, such that $a \geq i$, equals the constant $n-i$.
    Finally, since each $f_{ij}$ has degree 2, and $\phi_T$ has degree $|\sigma|$, the decomposition~\eqref{triple decomp} yields the Hilbert--Poincar\'e series
    \begin{align*}
    P(M_\sigma; t) &= \sum_{\mathbf{E} \in \E} \left( \#\mathcal{T}_{\mathbf{E}}(\sigma) \sum_{\F \in \mathcal{F}_{\mathbf{E}} } \frac{(t^2)^{\# \cor(\F)}}{(1-t^2)^{\#\F}} \cdot t^{|\sigma|} \right) \\
    &= \sum_{\mathbf{E} \in \E} \left( \#\mathcal{T}_{\mathbf{E}}(\sigma) \: \frac{\sum_{\F \in \mathcal{F}_{\mathbf{E}} } (t^2)^{\# \cor(\F)}}{(1-t^2)^{d_{\mathbf{E}}}} \cdot t^{|\sigma|} \right)  = t^{|\sigma|} \sum_{\mathbf{E} \in \E} \#\mathcal{T}_{\mathbf{E}}(\sigma) \cdot P_{\mathbf{E}}(t),
    \end{align*}
    by~\eqref{Hilbert series from Stanley decomp general}.
    The result for the case $k \geq \#\delta(\mathbf{P})$ is immediate from Theorem~\ref{thm:Stanley decomps and HS}.
\end{proof}

Although it is straightforward to program the various constructions used in this section, it is illustrative to work out the following small examples by hand.

\begin{exam}[Symplectic group]
    \label{ex:Sp toy}
    Let
    \[
    H = \Sp_{2k}, \qquad k=3, \qquad n=6, \qquad \sigma = (1,1) = \ydiagram{1,1}.
    \]
    Then we have 
    \[
    \mathcal{T}(\sigma) = \SSYT(\sigma,n) =  \left\{ \ytableaushort{1,2}, \ytableaushort{1,3}, \ytableaushort{1,4},\ytableaushort{1,5}, \ytableaushort{1,6},
    \ytableaushort{2,3},
    \ytableaushort{2,4},
    \ytableaushort{2,5},
    \ytableaushort{2,6},
    \ytableaushort{3,4},
    \ytableaushort{3,5},
    \ytableaushort{3,6},
    \ytableaushort{4,5},
    \ytableaushort{4,6},
    \ytableaushort{5,6}
    \right\}.
    \]
    Let $T \in \mathcal{T}(\sigma)$, and suppose that  ${\rm end}(T) = \mathbf{E} = \{ (i_t, 6) : 1 \leq t \leq 3 \}$, with $i_1 < i_2 < i_3$.
    By~\eqref{i hat t}, we have $(\hat{\imath}_1, \hat{\imath}_2, \hat{\imath}_3) = (1, 3, 5)$.
    Thus $\min\{T_{1,1}, \: \hat{\imath}_1\} = 1$, and so by~\eqref{end T details Sp} we must have $i_1 = 1$.
    Moreover, since $\ell(\sigma) = 2$, the entry $T_{3,1}$ is undefined, so by~\eqref{end T details Sp} we have $i_3 = \hat{\imath}_3 = 5$.
    Finally, since $i_2 = \min\{T_{2,1}, \: \hat{\imath}_2\} = \min\{T_{2,1}, \: 3\}$, and since in any tableau we have $T_{2,1} \geq 2$, we conclude that
    \[
    i_2 = \begin{cases}
        2, & T_{2,1} = 2,\\
        3 & \text{otherwise}.
    \end{cases}
    \]
    Hence either $\mathbf{E} = \mathbf{E}_{125} \coloneqq \{(1,6), (2,6), (5,6)\}$ or $\mathbf{E} = \mathbf{E}_{135} \coloneqq \{(1,6), (3,6), (5,6)\}$.
    Therefore, for all other $\mathbf{E} \in \E$ we have $\mathcal{T}_{\mathbf{E}}(\sigma) = \varnothing$, so we may restrict our attention to just these two:
    \tikzstyle{dot}=[circle,fill=black, minimum size = 4.5pt, inner sep=0pt]
    \tikzstyle{corner}=[rectangle,draw=black,thin, minimum size = 8pt, inner sep=2pt]
\[
\begin{array}{lll}
\mathcal{F}_{\mathbf{E}_{125}} = 
        \left\{
        \begin{tikzpicture}          [scale=.3,baseline=(current bounding box.center),every node/.style={scale=.7}]
\draw [white,fill=lightgray] (1.5,6.5) -- ++ (4,0) -- ++ (0,-2) -- ++ (-1,0) -- ++ (0,-1) -- ++ (-1,0) -- ++ (0,1) -- ++ (-1,0) -- ++ (0,1)  -- ++ (-1,0) -- cycle;
\draw[line width=5pt, lightgray, line cap=round]
(6,6) -- ++(0,0)
(6,5) -- ++(0,0);
\draw[line width=3pt, lightgray] (5,4) -- ++(0,-1) -- ++(1,0) -- ++(0,-1);
\foreach \x in {2,...,6}{\foreach \y in {\x,...,6}{\node [dot] at (8-\x,\y) {};}}
\draw [densely dotted] (1.5, 6.5) -- ++ (5,0) -- ++ (0,-5) -- ++(-1,0) -- ++ (0,1) -- ++(-1,0) -- ++ (0,1) -- ++(-1,0) -- ++ (0,1) -- ++(-1,0) -- ++ (0,1) -- ++(-1,0) -- ++ (0,1);
\end{tikzpicture}
\quad
\begin{tikzpicture}          [scale=.3,baseline=(current bounding box.center),every node/.style={scale=.7}]
\draw [white,fill=lightgray] (1.5,6.5) -- ++ (4,0) -- ++ (0,-2) -- ++ (-1,0) -- ++ (0,-1) -- ++ (-1,0) -- ++ (0,1) -- ++ (-1,0) -- ++ (0,1)  -- ++ (-1,0) -- cycle;
\draw[line width=5pt, lightgray, line cap=round]
(6,6) -- ++(0,0)
(6,5) -- ++(0,0);
\draw[line width=3pt, lightgray] (5,4) -- ++(1,0) node [corner] {} -- ++(0,-2);
\foreach \x in {2,...,6}{\foreach \y in {\x,...,6}{\node [dot] at (8-\x,\y) {};}}
\draw [densely dotted] (1.5, 6.5) -- ++ (5,0) -- ++ (0,-5) -- ++(-1,0) -- ++ (0,1) -- ++(-1,0) -- ++ (0,1) -- ++(-1,0) -- ++ (0,1) -- ++(-1,0) -- ++ (0,1) -- ++(-1,0) -- ++ (0,1);
\end{tikzpicture}
        \right\},
        & &
        \mathcal{T}_{\mathbf{E}_{125}}(\sigma) = \left\{ \ytableaushort{1,2} \right\},\\[6ex]
\mathcal{F}_{\mathbf{E}_{135}} = 
        \left\{
        \begin{tikzpicture}          [scale=.3,baseline=(current bounding box.center),every node/.style={scale=.7}]
\draw [white,fill=lightgray] (1.5,6.5) -- ++ (4,0) -- ++ (0,-2) -- ++ (-1,0) -- ++ (0,-1) -- ++ (-1,0) -- ++ (0,1) -- ++ (-1,0) -- ++ (0,1)  -- ++ (-1,0) -- cycle;
\draw[line width=5pt, lightgray, line cap=round]
(6,6) -- ++(0,0);
\draw[line width=3pt, lightgray] (5,4) -- ++(0,-1) -- ++(1,0) -- ++(0,-1)
(6,5) -- ++(0,-1);
\foreach \x in {2,...,6}{\foreach \y in {\x,...,6}{\node [dot] at (8-\x,\y) {};}}
\draw [densely dotted] (1.5, 6.5) -- ++ (5,0) -- ++ (0,-5) -- ++(-1,0) -- ++ (0,1) -- ++(-1,0) -- ++ (0,1) -- ++(-1,0) -- ++ (0,1) -- ++(-1,0) -- ++ (0,1) -- ++(-1,0) -- ++ (0,1);
\end{tikzpicture}
        \right\},
        & &
        \mathcal{T}_{\mathbf{E}_{135}}(\sigma) = \underbrace{\left\{
        \ytableaushort{1,3}, \ytableaushort{1,4},\ytableaushort{1,5}, \ldots,     \ytableaushort{4,5},
    \ytableaushort{4,6},
    \ytableaushort{5,6}
    \right\}}_{\mathclap{\text{the 14 remaining tableaux in $\mathcal{T}(\sigma)$}}}.        
\end{array}
\]
This gives us an explicit understanding of the set 
\[
\mathcal{J}(\sigma) = \Big(\mathcal{F}_{\mathbf{E}_{125}} \times \mathcal{T}_{\mathbf{E}_{125}}(\sigma) \Big) \sqcup \Big(\mathcal{F}_{\mathbf{E}_{135}} \times \mathcal{T}_{\mathbf{E}_{135}}(\sigma) \Big),
\]
which contains exactly $(2 \times 1) + (1 \times 14) = 16$ jellyfish.
To obtain the Hilbert--Poincar\'e series of $M_\sigma$ via Corollary~\ref{cor:HS}, we count the number of shaded points in any $\F \in \mathcal{F}_{\mathbf{E}}$ to obtain $d_{\mathbf{E}_{125}} = 14$ and $d_{\mathbf{E}_{135}} = 15$, and we count the corners to obtain the numerators as follows:
\[
\begin{array}{lll}
    P_{\mathbf{E}_{125}}(t) = \dfrac{(t^2)^0 + (t^2)^1}{(1-t^2)^{14}} = \dfrac{1+t^2}{(1-t^2)^{14}}, & & \#\mathcal{T}_{\mathbf{E}_{125}}(\sigma) = 1,\\[4ex]
    P_{\mathbf{E}_{135}}(t) = \dfrac{(t^2)^0}{(1-t^2)^{15}} = \dfrac{1}{(1-t^2)^{15}}, & & \#\mathcal{T}_{\mathbf{E}_{125}}(\sigma) = 14.
    \end{array}
\]
Thus since $|\sigma| = 2$, the Hilbert--Poincar\'e series is given by
\[
P(M_{\sigma}; t) = t^2 \left( \frac{1+t^2}{(1-t^2)^{14}} + \frac{14}{(1-t^2)^{15}} \right) = \frac{15t^2 - t^6}{(1-t^2)^{15}}.
\]

\end{exam}

\begin{exam}[General linear group]
    \label{ex:GL toy}
    Let
    \[
    H = \GL_k, \qquad k=3, \qquad p=3, \: q=4, \qquad \sigma = (1,0,-1).
    \]
    Note that $U_\sigma$ is the nontrivial component of the adjoint representation of $\GL_3$.
    We have $\mathcal{T}(\sigma) = \{ ( \ytableaushort{x}, \ytableaushort{y} ) : 1 \leq x \leq 4, \: 1 \leq y \leq 3 \}$.
    Since $k^+ = 1$ and $k^- = 2$, we will denote elements of $\E$ with the shorthand $\mathbf{E}_{abc} \coloneqq \{ (3,a), \: (b, 4), \: (c,4) \}$.
    There are six elements $\mathbf{E} \in \E$ such that $\mathcal{T}_{\mathbf{E}}(\sigma)$ is nonempty:
    \tikzstyle{dot}=[circle,fill=black, minimum size = 4.5pt, inner sep=0pt]
    \tikzstyle{corner}=[rectangle,draw=black,thin, minimum size = 8pt, inner sep=2pt]
\[
\begin{array}{lll}
\mathcal{F}_{\mathbf{E}_{113}} = 
        \Bigg\{
        \begin{tikzpicture}  [scale=.3,baseline=(current bounding box.center),every node/.style={scale=.7}]
\draw [white,fill=lightgray] (.5,3.5) -- ++ (2,0) -- ++ (0,-1) -- ++ (-1,0) -- ++ (0,-1) -- ++ (-1,0) -- cycle;
\draw[line width=5pt, lightgray, line cap=round]
(1,1) -- ++(0,0);
\draw[line width=3pt, lightgray] (2,2) -- ++(0,-1) -- ++(2,0)
(3,3) -- ++ (1,0);
\foreach \x in {1,...,4}{\foreach \y in {1,...,3}{\node [dot] at (\x,\y) {};}}

\draw [densely dotted] (.5,3.5) rectangle (4.5,.5);
\node at (0,0) {};
\end{tikzpicture}
\;
\begin{tikzpicture}  [scale=.3,baseline=(current bounding box.center),every node/.style={scale=.7}]
\draw [white,fill=lightgray] (.5,3.5) -- ++ (2,0) -- ++ (0,-1) -- ++ (-1,0) -- ++ (0,-1) -- ++ (-1,0) -- cycle;
\draw[line width=5pt, lightgray, line cap=round]
(1,1) -- ++(0,0);
\draw[line width=3pt, lightgray] (2,2) -- ++(1,0) node [corner] {} -- ++(0,-1) -- ++ (1,0)
(3,3) -- ++ (1,0);
\foreach \x in {1,...,4}{\foreach \y in {1,...,3}{\node [dot] at (\x,\y) {};}}

\draw [densely dotted] (.5,3.5) rectangle (4.5,.5);
\node at (0,0) {};
\end{tikzpicture}
\;
\begin{tikzpicture}  [scale=.3,baseline=(current bounding box.center),every node/.style={scale=.7}]
\draw [white,fill=lightgray] (.5,3.5) -- ++ (2,0) -- ++ (0,-1) -- ++ (-1,0) -- ++ (0,-1) -- ++ (-1,0) -- cycle;
\draw[line width=5pt, lightgray, line cap=round]
(1,1) -- ++(0,0);
\draw[line width=3pt, lightgray] (2,2) -- ++(2,0) node [corner] {} -- ++(0,-1)
(3,3) -- ++ (1,0);
\foreach \x in {1,...,4}{\foreach \y in {1,...,3}{\node [dot] at (\x,\y) {};}}

\draw [densely dotted] (.5,3.5) rectangle (4.5,.5);
\node at (0,0) {};
\end{tikzpicture}
\;
        \Bigg\},
        & &
        \mathcal{T}_{\mathbf{E}_{113}}(\sigma) = \Big\{ \left(\ytableaushort{1}, \ytableaushort{1} \right) \Big\},\\[3ex]
\mathcal{F}_{\mathbf{E}_{123}} = 
        \Bigg\{
        \begin{tikzpicture}  [scale=.3,baseline=(current bounding box.center),every node/.style={scale=.7}]
\draw [white,fill=lightgray] (.5,3.5) -- ++ (2,0) -- ++ (0,-1) -- ++ (-1,0) -- ++ (0,-1) -- ++ (-1,0) -- cycle;
\draw[line width=5pt, lightgray, line cap=round]
(1,1) -- ++(0,0);
\draw[line width=3pt, lightgray] (2,2) -- ++(0,-1) -- ++(2,0)
(3,3) -- ++ (0,-1) -- ++ (1,0);
\foreach \x in {1,...,4}{\foreach \y in {1,...,3}{\node [dot] at (\x,\y) {};}}

\draw [densely dotted] (.5,3.5) rectangle (4.5,.5);
\node at (0,0) {};
\end{tikzpicture}
\;
\begin{tikzpicture}  [scale=.3,baseline=(current bounding box.center),every node/.style={scale=.7}]
\draw [white,fill=lightgray] (.5,3.5) -- ++ (2,0) -- ++ (0,-1) -- ++ (-1,0) -- ++ (0,-1) -- ++ (-1,0) -- cycle;
\draw[line width=5pt, lightgray, line cap=round]
(1,1) -- ++(0,0);
\draw[line width=3pt, lightgray] (2,2) -- ++(0,-1) -- ++(2,0)
(3,3) -- ++ (1,0) node [corner] {} -- ++ (0,-1);
\foreach \x in {1,...,4}{\foreach \y in {1,...,3}{\node [dot] at (\x,\y) {};}}

\draw [densely dotted] (.5,3.5) rectangle (4.5,.5);
\node at (0,0) {};
\end{tikzpicture}
\;
\begin{tikzpicture}  [scale=.3,baseline=(current bounding box.center),every node/.style={scale=.7}]
\draw [white,fill=lightgray] (.5,3.5) -- ++ (2,0) -- ++ (0,-1) -- ++ (-1,0) -- ++ (0,-1) -- ++ (-1,0) -- cycle;
\draw[line width=5pt, lightgray, line cap=round]
(1,1) -- ++(0,0);
\draw[line width=3pt, lightgray] (2,2) -- ++(1,0) node [corner] {} -- ++(0,-1) -- ++ (1,0)
(3,3) -- ++ (1,0) node [corner] {} -- ++ (0,-1);
\foreach \x in {1,...,4}{\foreach \y in {1,...,3}{\node [dot] at (\x,\y) {};}}

\draw [densely dotted] (.5,3.5) rectangle (4.5,.5);
\node at (0,0) {};
\end{tikzpicture}
\;
        \Bigg\},
        & &
        \mathcal{T}_{\mathbf{E}_{123}}(\sigma) = \Big\{ \left(\ytableaushort{1}, \ytableaushort{2} \right), \;  \left(\ytableaushort{1}, \ytableaushort{3} \right) \Big\},\\[3ex]
\mathcal{F}_{\mathbf{E}_{213}} = 
        \Bigg\{
        \begin{tikzpicture}  [scale=.3,baseline=(current bounding box.center),every node/.style={scale=.7}]
\draw [white,fill=lightgray] (.5,3.5) -- ++ (2,0) -- ++ (0,-1) -- ++ (-1,0) -- ++ (0,-1) -- ++ (-1,0) -- cycle;
\draw[line width=3pt, lightgray] 
(1,1) -- ++ (1,0)
(2,2) -- ++(1,0) -- ++(0,-1) -- ++(1,0)
(3,3) -- ++ (1,0);
\foreach \x in {1,...,4}{\foreach \y in {1,...,3}{\node [dot] at (\x,\y) {};}}

\draw [densely dotted] (.5,3.5) rectangle (4.5,.5);
\node at (0,0) {};
\end{tikzpicture}
\;
\begin{tikzpicture}  [scale=.3,baseline=(current bounding box.center),every node/.style={scale=.7}]
\draw [white,fill=lightgray] (.5,3.5) -- ++ (2,0) -- ++ (0,-1) -- ++ (-1,0) -- ++ (0,-1) -- ++ (-1,0) -- cycle;
\draw[line width=3pt, lightgray] 
(1,1) -- ++ (1,0)
(2,2) -- ++(2,0) node [corner] {} -- ++(0,-1)
(3,3) -- ++ (1,0);
\foreach \x in {1,...,4}{\foreach \y in {1,...,3}{\node [dot] at (\x,\y) {};}}

\draw [densely dotted] (.5,3.5) rectangle (4.5,.5);
\node at (0,0) {};
\end{tikzpicture}
\;
        \Bigg\},
        & &
        \mathcal{T}_{\mathbf{E}_{213}}(\sigma) = \Big\{ \left(\ytableaushort{2}, \ytableaushort{1} \right) \Big\},\\[3ex]
\mathcal{F}_{\mathbf{E}_{223}} = 
        \Bigg\{
        \begin{tikzpicture}  [scale=.3,baseline=(current bounding box.center),every node/.style={scale=.7}]
\draw [white,fill=lightgray] (.5,3.5) -- ++ (2,0) -- ++ (0,-1) -- ++ (-1,0) -- ++ (0,-1) -- ++ (-1,0) -- cycle;
\draw[line width=3pt, lightgray] 
(1,1) -- ++ (1,0)
(2,2) -- ++(1,0) -- ++(0,-1) -- ++(1,0)
(3,3) -- ++ (1,0) -- ++(0,-1);
\foreach \x in {1,...,4}{\foreach \y in {1,...,3}{\node [dot] at (\x,\y) {};}}

\draw [densely dotted] (.5,3.5) rectangle (4.5,.5);
\node at (0,0) {};
\end{tikzpicture}
\;
        \Bigg\},
        & &
        \mathcal{T}_{\mathbf{E}_{223}}(\sigma) = \Big\{ \left(\ytableaushort{2}, \ytableaushort{2} \right), \: \left(\ytableaushort{2}, \ytableaushort{3} \right) \Big\},\\[3ex]
\mathcal{F}_{\mathbf{E}_{313}} = 
        \Bigg\{
        \begin{tikzpicture}  [scale=.3,baseline=(current bounding box.center),every node/.style={scale=.7}]
\draw [white,fill=lightgray] (.5,3.5) -- ++ (2,0) -- ++ (0,-1) -- ++ (-1,0) -- ++ (0,-1) -- ++ (-1,0) -- cycle;
\draw[line width=3pt, lightgray] 
(1,1) -- ++ (2,0)
(2,2) -- ++(2,0) -- ++(0,-1)
(3,3) -- ++ (1,0);
\foreach \x in {1,...,4}{\foreach \y in {1,...,3}{\node [dot] at (\x,\y) {};}}

\draw [densely dotted] (.5,3.5) rectangle (4.5,.5);
\node at (0,0) {};
\end{tikzpicture}
\;
        \Bigg\},
        & &
        \mathcal{T}_{\mathbf{E}_{313}}(\sigma) = \Big\{ \left(\ytableaushort{3}, \ytableaushort{1} \right), \: \left(\ytableaushort{3}, \ytableaushort{2} \right), \: \left(\ytableaushort{3}, \ytableaushort{3} \right) \Big\},\\[3ex]
\mathcal{F}_{\mathbf{E}_{412}} = 
        \Bigg\{
        \begin{tikzpicture}  [scale=.3,baseline=(current bounding box.center),every node/.style={scale=.7}]
\draw [white,fill=lightgray] (.5,3.5) -- ++ (2,0) -- ++ (0,-1) -- ++ (-1,0) -- ++ (0,-1) -- ++ (-1,0) -- cycle;
\draw[line width=3pt, lightgray] 
(1,1) -- ++ (3,0)
(2,2) -- ++(2,0)
(3,3) -- ++ (1,0);
\foreach \x in {1,...,4}{\foreach \y in {1,...,3}{\node [dot] at (\x,\y) {};}}

\draw [densely dotted] (.5,3.5) rectangle (4.5,.5);
\node at (0,0) {};
\end{tikzpicture}
\;
        \Bigg\},
        & &
        \mathcal{T}_{\mathbf{E}_{412}}(\sigma) = \Big\{ \left(\ytableaushort{4}, \ytableaushort{1} \right), \: \left(\ytableaushort{4}, \ytableaushort{2} \right), \: \left(\ytableaushort{4}, \ytableaushort{3} \right) \Big\}.
\end{array}
\]

    \noindent Hence the number of jellyfish in $\mathcal{J}(\sigma)$ equals $3(1) + 3(2) + 2(1) + 1(2) + 1(3) + 1(3) = 19$.
    Using Corollary~\ref{cor:HS}, we obtain the Hilbert--Poincar\'e series
\[
    \textstyle \frac{1+2t^2}{(1-t^2)^{10}} + 2 \cdot \frac{1+t^2 + t^4}{(1-t^2)^{11}} + \frac{1+t^2}{(1-t^2)^{11}} + 2 \cdot \frac{1}{(1-t^2)^{12}} + 3 \cdot \frac{1}{(1-t^2)^{12}} + 3 \cdot \frac{1}{(1-t^2)^{12}} = \displaystyle \frac{12- 4 t^4}{(1-t^2)^{12}}.
    \]
    
\end{exam}

\section{Proof of Theorem~\ref{thm:Stanley decomps and HS}}
\label{sec:proofs}

Our proof of Theorem~\ref{thm:Stanley decomps and HS} relies on viewing the module of covariants $M_\sigma$ as the multiplicity space for the $U_\sigma^*$-isotypic component inside $\C[W]$.
These multiplicity spaces, in turn, are irreducible representations of a certain Lie algebra $\g$ corresponding to $H$ via a principle known as \emph{Howe duality}~\cite{Howe89}.
We devote the first subsection below to recording the details of the Howe duality setting.
We largely follow the conventions in~\cite{DES91}*{\S7}; see also the exposition on reductive dual pairs in~\cite{NOT}*{\S3}.
In analytic contexts, Howe duality arises in the theory of \emph{theta liftings}; we refer the reader to~\cites{KV,Gelbart,LeeZhu,NZ}, for example.
The explicit action of $\g$ via differential operators is difficult to find in the literature, and so we include those details in Appendix~\ref{app:Howe}.

\subsection*{Howe duality} 

Let $H = \GL_k$, $\O_k$, or $\Sp_{2k}$ acting on the space $\C[W]$ given in~\eqref{table coordinates}.
Let $\mathcal{D}(W)$ denote the Weyl algebra of polynomial-coefficient differential operators on $\C[W]$, and let $\mathcal{D}(W)^{H}$ denote the subalgebra of $H$-invariant operators. 
There is a complex Lie algebra $\g$ that embeds in $\mathcal{D}(W)^H$ as a Lie subalgebra and is spanned by a generating set of the associative algebra $\mathcal{D}(W)^H$.
Let $\omega: \g \longrightarrow \mathcal{D}(W)^H$ denote the injective homomorphism.
Then $\g$ acts on $\C[W]$ via its image $\omega(\g)$ in $\mathcal{D}(W)^H$.
Similarly, if $\sigma \in \widehat{H}$,
then $\mathcal{D}(W)^{H}$ and hence $\g$
acts on the multiplicity space
\begin{align*}
{\rm Hom}_{H}(U_\sigma^*, \C[W]) &\cong M_\sigma,\\
\psi & \mapsto \Big[w \mapsto \sum_i \psi(u_i^*)(w) u_i \Big],
\end{align*}
where $\{u_i\}$ is a basis for $U_\sigma$ and $\{u_i^*\}$ is the dual basis for $U_\sigma^*$.
The action of an operator $D\in \mathcal{D}(W)^{H}$ on $M_\sigma$ is given by
$D\cdot[w \mapsto \sum_i f_i(w) u_i]=[w \mapsto \sum_{i} (Df_i)(w) u_i]$.
The algebra $\mathcal{D}(W)^H$ acts irreducibly on $M_\sigma$, and hence $\g$ does as well.

One can say more about the Lie algebra $\g$ described above.
In fact, $\g$ is the complexified Lie algebra of a real reductive Lie group $G_\R$, with maximal compact subgroup $K_\R$ such that $G_\R/K_\R$ is a Hermitian symmetric space.
For each classical group $H$, this real group $G_\R$ is given in the table in Theorem~\ref{thm:Howe duality} below.
In that table, we use the following realizations:
\begin{align*}
\U(p,q)& \coloneqq \left\{g\in \GL(p+q,\C) : g \begin{pmatrix}  I_p &0 \\   0 & -I_q\end{pmatrix}g^* =  \begin{pmatrix}  I_p &0 \\   0 & -I_q\end{pmatrix}\right\},\\[5pt]
\Sp(2n,\R)&\coloneqq\left\{g\in \GL(2n,\C) : g \begin{pmatrix}  0 & I_n \\  -I_n& 0\end{pmatrix}g^t =  \begin{pmatrix}  0 & I_n \\  -I_n & 0\end{pmatrix}\right\}\cap\U(n,n),\\[5pt]
\O^*(2n) & \coloneqq \left\{g\in \GL(2n,\C) : g \begin{pmatrix}  0 & I_n \\  I_n & 0\end{pmatrix}g^t =  \begin{pmatrix}  0 & I_n \\   I_n & 0\end{pmatrix}\right\}\cap\U(n,n).\label{eq: Sp(2n,R)}
\end{align*}
Note that we have 
$\U(p,q) \cap \U(p+q) \cong \U(p)\times \U(q)$, and $\Sp(2n,\R)\cap \U(2n) \cong \U(n)$, and $\O^*(2n)\cap \U(2n)\cong \U(n)$,
where in the last two cases $\U(n)$ 
is embedded block-diagonally as follows: 
\[
\left\{\begin{pmatrix}
    a & 0 \\
    0 & (a^{-1})^{t}
\end{pmatrix}
: a\in \U(n)\right\}\cong \U(n).
\]
It follows that
$\Sp(2n,\R)\subseteq \SL(2n,\C)$ and $\O^*(2n) \subseteq \SL(2n,\C)$. 
For this reason, many authors write $\SO^*(2n)$ to denote $\O^*(2n)$.
In the table in Theorem~\ref{thm:Howe duality}, $\operatorname{Mp}(2n, \R)$ denotes the metaplectic double cover of $\Sp(2n, \R)$, while $\widetilde{\GL}_n$ denotes the double cover defined in Appendix~\ref{sub:appendix O}.

\begin{rema} 
Our definition of $\Sp(2n,\R)$ above
differs from the more standard definition 
by a Cayley transform as follows.
Let $\mathbf{c}\in \SL(2n,\C)$ be the matrix given by
\[
\mathbf{c} \coloneqq \frac{1}{\sqrt{2}}
\begin{pmatrix}I_n &\sqrt{-1}I_n\\\sqrt{-1}I_n& I_n
\end{pmatrix}.
\]
Then $\mathbf{c} \Sp(2n,\R)\mathbf{c}^{-1}\subseteq \SL(2n,\R)$, with equality if and only if $n=1$.
\end{rema}

Let $\k$ denote the complexified Lie algebra of $K_\R$, and let $K$ denote the complexification of $K_\R$.
Roughly speaking, a \emph{$(\g,K)$-module} is a 
complex vector space carrying representations of both $\g$ and $K$, such that $K$ acts locally finitely and the actions of $\g$
and $K$ are compatible; see~\cite{SchmidNotes}*{Def.~3.2.3} for details.
In the Howe duality setting, the natural $\k$-action on $M_\sigma$ (namely, the differential of the right matrix multiplication on $\C[W]$) is different from that obtained by restricting the $\g$-action to $\k$.
Indeed, in order for the $\k$-action to extend to a $\g$-action, we must tensor $M_\sigma$ with a certain one-dimensional $\k$-module $F_{-kc\zeta}$, where $c$ is a certain constant intrinsic to $G_\R / K_\R$ (see~\cite{BaiHunziker}*{\S2}), and $\zeta$ is the fundamental weight for $\g$ that is orthogonal to the compact roots (see details in Theorem~\ref{thm:Howe duality}).
In this way, the $\k$-action on
\begin{equation}
    \label{M tilde}
    \widetilde{M}_\sigma \coloneqq M_\sigma \otimes F_{-kc\zeta}
\end{equation}
integrates to a $K$-action in a manner compatible with the $\g$-action, and hence 
$\widetilde{M}_\sigma$ can be viewed as a $(\g, K)$-module.
Further, $\widetilde{M}_\sigma$ is a highest weight $\g$-module, which follows from work of Harish-Chandra~\cites{HC55,HC56} since $G_\R/K_\R$ is Hermitian symmetric.
We denote this highest weight by $\la(\sigma)$.
To write down $\la(\sigma)$ in coordinates, we introduce the following shorthand for a weight of $\g$:
\begin{align*}
    \overrightarrow{\sigma} &\coloneqq \text{vector obtained from $\sigma$ by padding with 0's on the right},\\
    \overleftarrow{\sigma} &\coloneqq \text{vector obtained by reversing the coordinates of $\overrightarrow{\sigma}$},\\
    \mu - k &\coloneqq \text{vector obtained from $\mu$ by subtracting $k$ from every coordinate}.
\end{align*}
In what follows, we write weights for $\g$ in standard epsilon coordinates on the Cartan subalgebra consisting of diagonal matrices.
In particular, a weight for $\g = \gl_{p+q}$ is written in the form $(a_1, \ldots, a_p \mid b_1, \ldots, b_q)$;
a weight for $\g = \sp_{2n}$ or $\so_{2n}$ is written as $(a_1, \ldots, a_n)$.

\begin{thm}[\cite{Howe89}, \cite{KV}]
\label{thm:Howe duality}
Assume one of the three settings in the following table:

\begin{center}
\resizebox{\linewidth}{!}
{ 
\begin{tblr}{colspec={|Q[m,c]|Q[m,c]|Q[m,c]|Q[m,c]|Q[m,c]|Q[m,c]|Q[m,c]|Q[m,c]|},stretch=1.5}

\hline

$H$ & $\C[W]$ & {$\Sigma =$ \\ $\{\sigma \in \widehat{H} : \ldots \}$} & $G_\R$ & $(\g, K)$ & $\lambda(\sigma)$ & $c$ & $\zeta$ \\

\hline[2pt]

$\GL_k$ &
$\C[\M_{p,k} \oplus \M_{k,q}]$ & {$\ell(\sigma^+) \leq q$, \\ $\ell(\sigma^-) \leq p$} & $\operatorname{U}(p,q)$ & $(\gl_{p+q}, \GL_p \times \GL_q)$ & $(-\overleftarrow{\sigma^{-}} - k \mid \overrightarrow{\sigma^+})$ & $1$ & $(1^p \mid 0^q)$ \\

\hline

$\O_k$ & 
$\C[\M_{k,n}]$ & $\ell(\sigma) \leq n$ & ${\rm Mp}(2n,\R)$ & $(\sp_{2n}, \widetilde{\GL}_n)$ & $- \overleftarrow{\sigma} -\frac{k}{2}$ & $1/2$ & $(1^n)$ \\

\hline

$\Sp_{2k}$ &
$\C[\M_{2k,n}]$ & $\ell(\sigma) \leq n$ & $\O^*(2n)$ & $(\so_{2n}, \GL_n)$ & $- \overleftarrow{\sigma} -k$ & $2$ & $\left((\frac{1}{2})^n\right)$ \\

\hline

\end{tblr}
}
\end{center}

\noindent We have the following multiplicity-free decomposition as a $(\g,K) \times H$-module:
\begin{equation}
    \label{Howe decomp}
    \C[W] \cong \bigoplus_{\sigma \in \Sigma} 
    \widetilde{M}_\sigma
   \otimes U_{\sigma}^*,
\end{equation}
where $\Sigma \coloneqq \{ \sigma \in \widehat{H} : M_\sigma \neq 0\}$.
Furthermore, the map $\sigma \mapsto \lambda(\sigma)$ is an injective map from $\Sigma$ into the set of $\k$-dominant integral weights, such that as a $(\g, K)$-module
\begin{equation}
\label{covariants = L lambda}
 \widetilde{M}_\sigma \cong L_{\la(\sigma)}
 \coloneqq \textup{simple $\g$-module with highest weight $\la(\sigma)$},
\end{equation}
and $L_{\la(\sigma)}$ is unitarizable with respect to $\g_\R$.
\end{thm}

See Appendix~\ref{app:Howe} for further details, where we record explicitly the Lie algebra homomorphism $\omega: \g \longrightarrow \mathcal{D}(W)^H$, using a block matrix form that we find especially helpful in understanding the differential operators by which $\g$ acts on $\C[W]$.
Upon comparing~\eqref{table dets GL Sp} with the $H$-spectrum $\Sigma \subseteq \widehat{H}$ given in Theorem~\ref{thm:Howe duality}, we observe that for all $\sigma \in \widehat{H}$,
\begin{equation}
    \label{Sigma T nonempty}
    \sigma \in \Sigma \text{ if and only if } \mathcal{T}(\sigma) \neq \varnothing.
\end{equation}

\subsection*{General linear and symplectic groups}

We now proceed toward the proof of Theorem~\ref{thm:Stanley decomps and HS}.
In the following discussion, let $H = \GL_k$ or $\Sp_{2k}$.
Let $N$ be the maximal unipotent subgroup of $H$ defined in the discussion preceding~\eqref{wt H GL} and~\eqref{wt H Sp}.
In \cite{Jackson}, the ring $\C[W]^N$ is called the \emph{ring of covariants}.
This ring effectively gathers all the modules of covariants into a single object, as follows.
Taking the $N$-invariants in the Howe duality decomposition~\eqref{Howe decomp}, we have
\begin{align*}
\C[W]^N &\cong \bigoplus_{\sigma \in \Sigma} \widetilde{M}_\sigma \otimes (U_\sigma^*)^N,\\
& \cong \bigoplus_{\sigma \in \Sigma} \widetilde{M}_\sigma \otimes \C u^*_{_{\rm LW}},
\end{align*}
where the functional $u^*_{_{\rm LW}}$ is dual to a fixed lowest weight vector $u_{_{\rm LW}} \in U_\sigma$, as in~\eqref{weight of u circ phi}.
In particular, recall that $u^*_{_{\rm LW}}$ is a highest weight vector in $U_\sigma^*$.
Hence as a $\g$-module, we have
\[
\C[W]^N = \bigoplus_{\sigma \in \Sigma} \C[W]^N_{\sigma^*} \cong \bigoplus_{\sigma \in \Sigma} \widetilde{M}_\sigma, 
\]
where 
\[
    \C[W]^N_{\sigma^*} \coloneqq \Big\{ f \in \C[W]^N : {\rm wt}_H(f) = \sigma^* \Big\},
\]
with a canonical isomorphism of $\g$-modules given by
    \begin{align}
    \label{Psi}
    \begin{split}
        \Psi: \widetilde{M}_\sigma &\longrightarrow \C[W]^N_{\sigma^*},\\
        \phi & \longmapsto u^*_{_{\rm LW}} \circ \phi.
        \end{split}
    \end{align}
Comparing~\eqref{Psi} with~\eqref{phi_T GL Sp}, it follows that $\Psi(\phi_T) = \det_T$, for all $T \in \mathcal{T}(\sigma)$.
Thus, if $\mathbf{f} \in \C[W]^H$ denotes a monomial in the $f_{ij}$'s, then we have
\begin{equation}
    \label{Psi on f det}
    \Psi(\mathbf{f} \cdot \phi_T) = \mathbf{f} \cdot {\textstyle \det_T}.
\end{equation}

Define the set of indeterminates
\[
Z \coloneqq 
\begin{cases}
    \{ z_{ij} : (i,j) \in \mathbf{P} \} \cup \left\{ w_J : J \in \bigcup_{t=1}^k \binom{[n]}{t} \right\}, & H = \Sp_{2k},\\[1ex]
    \{ z_{ij} : (i,j) \in \mathbf{P} \} \cup  \left\{w_J : J \in \bigcup_{t=1}^k \binom{[q]}{t} \right\} \cup \left\{ w^*_I : I \in \bigcup_{t=1}^k \binom{[p]}{t} \right\}, & H = \GL_k.
\end{cases}
\]
The \emph{tableau order} is the partial order among nonempty sets of positive integers, whereby $J \leq J'$ if and only if (when viewed as top-justified columns with increasing entries) $J$ can appear to the left of $J'$ in a semistandard tableau.

The following definition distinguishes certain monomials in $\C[Z]$; following Jackson~\cite{Jackson}, we associate these monomials with combinatorial objects called \emph{splits}.

\begin{defi}[Monomial of a split for $H = \Sp_{2k}$ or $\GL_k$; see Definition 3.6.15--16 in~\cite{Jackson}]\
\label{def:Sp split}

    An \emph{$\Sp_{2k}$-split} consists of four (possibly empty) subsets of $[n]$, namely $A = \{a_i\}_{i=1}^v$, $B = \{b_i\}_{i=1}^{r}$, $C = \{c_i\}_{i=1}^r$, $D = \{d_i\}_{i=1}^t$, such that $t+v \leq k$ and $r + t = k+1$, where
    \[
    d_t < \cdots < d_1 < c_r < \cdots < c_1 < b_1 < \cdots < b_r \text{ and } d_1 < a_v < \cdots < a_1.
    \]
The \emph{monomial} of an $\Sp_{2k}$-split is the following element of $\C[Z]$:
 \begin{equation}
    \label{monomial of Sp split}
 \left(\prod_{i=1}^r z_{c_i, b_i} \right) \textstyle w_{D \sqcup A}.
 \end{equation}

 A \emph{$\GL_k$-split} consists of four (possibly empty) subsets of $[q]$, namely $A = \{a_i\}_{i=1}^w$, $B = \{b_i\}_{i=1}^s$, $C = \{c_i\}_{i=1}^{s}$, $D = \{ d_i\}_{i=1}^{u}$, and four (possibly empty) subsets of $[p]$, namely $A^* = \{a^*_i\}_{i=1}^v$, $B^* = \{b^*_i\}_{i=1}^r$, $C^* = \{c^*_i\}_{i=1}^{r}$, $D^* = \{ d^*_i\}_{i=1}^{t}$, such that $t+v \leq k$, $u+w \leq k$, $r+s+t+u = k+1$, and $r+s > 0$, where
    \[d_u < \cdots < d_1 < c_s < \cdots < c_1 < b^*_1 < \cdots < b^*_s \text{ and } d_1 < a_w \text{ and } c_1 < b_1,
    \]
    \[
    d^*_t < \cdots < d^*_1 < c^*_r < \cdots < c^*_1 \leq b_1 < \cdots < b_r \text{ and } d^*_1 < a^*_v \text{ and } c^*_1 < b^*_1.
    \]
The \emph{monomial} of a $\GL_k$-split is the following element of $\C[Z]$:
 \begin{equation}
    \label{monomial of split GL}
 \left(\prod_{i=1}^r z_{c^*_i, b_i} \prod_{i=1}^s z_{b^*_i, c_i} \right) \textstyle w_{D \sqcup A} \: w^*_{D^* \sqcup A^*}.
 \end{equation} 
\end{defi}

\begin{lemma}
    \label{lemma:splits J J'}
    Let $\mathbf{z} \in \C[Z]$ be a monomial in the $z_{ij}$'s.
    If $\mathbf{z} w_{J}$ is divisible by the monomial of an $\Sp_{2k}$-split, then so is $z_{{\rm supp}(\mathbf{z})} w_{J'}$, for all $J' \leq J$ in the tableau order.
    Likewise, if $\mathbf{z} w_J w^*_I$ is divisible by the monomial of a $\GL_k$-split, then so is $z_{{\rm supp}(\mathbf{z})} w_{J'} w^*_{I'}$, for all $J' \leq J$ and $I' \leq I$ in the tableau order.
\end{lemma}

\begin{proof}
    Let $H = \Sp_{2k}$, and suppose that $\mathbf{z} w_J$ is divisible by the monomial of a split $(A, B, C, D)$.
    Since the monomial of a split is squarefree in the $z_{ij}$'s, we can pass from $\mathbf{z}$ to $z_{{\rm supp}(\mathbf{z})}$.
    By~\eqref{monomial of Sp split}, we have $J = D \sqcup A$.
    If $J' \leq J$, then $J' = D' \sqcup A'$, where $\#D' = \#D = t$ and $\#A' \geq \#A$.
    In particular, $D'$ consists of elements $d'_t < \cdots < d'_1$ such that each $d'_i \leq d_i$.
    Thus by Definition~\ref{def:Sp split}, $(A', B, C, D')$ is also an $\Sp_{2k}$-split, which completes the proof for $\Sp_{2k}$.
    The proof for $\GL_k$ is identical.
\end{proof}

In the following lemma, we introduce a map $\widetilde{\pi}^* : \C[Z] \longrightarrow \C[W]^N$.
The notation is due to the fact that upon restricting the domain of $\widetilde{\pi}^*$ to $S = \C[\mathbf{P}]$, one recovers the map $\pi^* : S \longrightarrow \C[W]^H$ from~\eqref{pi star map}.

\begin{lemma}[see \cite{Jackson}*{Thms.~3.6.17, 3.8.12, and 3.8.27}] 
    \label{lemma:Jackson Grobner}
Let $H = \Sp_{2k}$ or $\GL_k$.
Define the algebra homomorphism
\begin{align}
    \label{Omega}
    \begin{split}
    \widetilde{\pi}^* : \C[Z] & \longrightarrow \C[W]^N,\\
    z_{ij} & \longmapsto f_{ij},\\
    w_J & \longmapsto {\textstyle \det_J},\\
   \textup{(if $H = \GL_k$)} \;\; w^*_I & \longmapsto {\textstyle \det^*_I},\\
    \end{split}
\end{align}
with $f_{ij}$ and the \textup{det} functions as defined above in~\eqref{table P fij} and~\eqref{table dets GL Sp}, respectively.
Then $\ker \widetilde{\pi}^*$ is the ideal generated by the following elements:
\begin{enumerate}
    \item all monomials of $H$-splits;
    \item all products $w_J w_{J'}$ such that $J$ and $J'$ are incomparable in the tableau order;
    \item \textup{(}if $H = \GL_k$\textup{)} all products $w^*_I w^*_{I'}$ such that $I$ and $I'$ are incomparable in the tableau order;
    \item \textup{(}if $H = \GL_k$\textup{)} all products $w^*_I w_J$ such that $\#I + \#J > k$.
\end{enumerate}
Moreover, the images of the monomials lying outside $\ker \widetilde{\pi}^*$ furnish a linear basis for $\C[W]^N$, and are said to be \emph{standard monomials} in $\C[W]^N$.
\end{lemma}

We equip $\mathbf{P}$ with the \emph{product order}, that is, the partial order $\leq$ whereby
\begin{equation}
    \label{partial order on P}
    (i,j) \leq (i', j') \text{ if and only if } i \leq i' \text{ and } j \leq j'.
\end{equation}
Note that $\mathbf{P}$ is a bounded poset:
for $\Sp_{2k}$, the minimal and maximal elements are $(1,2)$ and $(n-1,n)$, respectively, and for $\GL_k$, they are $(1,1)$ and $(p,q)$.
Observe that an antichain in $\mathbf{P}$ is a subset of the form $\{(i_1, j_1), \ldots, (i_\ell, j_\ell)\}$ where $i_1 < \cdots < i_\ell$ and $j_1 > \cdots > j_\ell$.
Given $\mathbf{S} \subseteq \mathbf{P}$, we will use the shorthand
\[
\mathbf{S}|_{i > a, \: j>b } \coloneqq \{ (i,j) \in \mathbf{S} : i>a \text{ and } j>b \}.
\]
For each $T \in \mathcal{T}(\sigma)$, we define the following abstract simplicial complex $\Delta_T$:
\begin{align}
\label{Delta T}
\begin{split}
    (H = \Sp_{2k}) \quad & \Delta_T \coloneqq
        \left\{ \mathbf{S} \subseteq \mathbf{P} : \begin{array}{l}\operatorname{width} \left( \mathbf{S}|_{i > T_{t,1}} \right) \leq k-t \\ \textup{for all } 0 \leq t \leq \ell(\sigma) \end{array} \right\},\\[1ex]
        (H = \GL_{k}) \quad & \Delta_T \coloneqq
        \left\{ \mathbf{S} \subseteq \mathbf{P} : \begin{array}{l} \operatorname{width} \big( \mathbf{S}|_{i > T^-_{t,1}, \: j > T^+_{u,1}} \big) \leq k-t-u \\ \textup{for all } 0 \leq t \leq \ell(\sigma^-) \textup{ and } 0 \leq u \leq \ell(\sigma^+) \end{array} \right\},
        \end{split}
    \end{align}
    where we put $T_{0,1} \coloneqq 0$.
    Analogously to the function $\det_T$ in~\eqref{table dets GL Sp}, define
    \[
    w_T \coloneqq \begin{cases}
        \displaystyle \prod_{\substack{\text{columns} \\ \text{$J$ of $T$}}}{\textstyle w_J} , & H = \Sp_{2k}, \\[5ex]
        \displaystyle \prod_{\substack{\text{columns} \\ \text{$J$ of $T^+$}}}{\textstyle w_J} \prod_{\substack{\text{columns} \\ \text{$I$ of $T^-$}}} {\textstyle w^*_I} , & H = \GL_k.
    \end{cases}
    \]

\begin{lemma}
    \label{lemma: standard monomials}
    Let $H = \Sp_{2k}$ or $\GL_k$.
    We have
    \[
    \C[Z] \backslash \ker \widetilde{\pi}^* = \bigoplus_{\sigma \in \Sigma} \left(\bigoplus_{T \in \mathcal{T}(\sigma)} \C[\Delta_T] \: w_T \right).
    \]
\end{lemma}

\begin{proof}
    Every monomial in $\C[Z]$ is of the form $\mathbf{z} \mathbf{w}$, where $\mathbf{z}$ is a monomial in the $z_{ij}$'s, and where $\mathbf{w}$ is a monomial in the $w_J$'s (and $w^*_I$'s, if $H = \GL_k$).
    We have $\mathbf{zw} \notin \ker \widetilde{\pi}^*$ if and only if \begin{enumerate}[label=(\alph*)]
        \item $\mathbf{zw}$ is not divisible by the monomial of an $H$-split, and
        \item $\mathbf{w} = w_T$ for some $T \in \coprod_{\sigma \in \Sigma} \mathcal{T}(\sigma)$;       
    \end{enumerate}
    condition (a) is due to the generators~(1) of $\ker \widetilde{\pi}^*$ in Lemma~\ref{lemma:Jackson Grobner}, and condition (b) is due to the generators~(2)--(4).
    Therefore we have
    \[
    \C[Z] \backslash \ker \widetilde{\pi}^* = \bigoplus_{\sigma \in \Sigma} \Bigg( \bigoplus_{T \in \mathcal{T}(\sigma)} \underbrace{\Bigg(\bigoplus_{\substack{\mathbf z: \\ \mathbf{z}w_T \text{ not div.} \\ \text{by monom.} \\ \text{of $H$-split}}} \C \, \mathbf{z} \Bigg)}_{(*)} w_T \Bigg),
    \]
    and it suffices to prove that $(*) = \C[\Delta_T]$, as defined in~\eqref{Delta T}.
    We prove this below separately for the $\Sp_{2k}$ and $\GL_k$ cases.

    Let $H = \Sp_{2k}$, let $\sigma \in \Sigma$, and fix $T \in \mathcal{T}(\sigma)$, with the initial column denoted by $T_{\bullet, 1}$.
    By Lemma~\ref{lemma:splits J J'}, $\mathbf{z} w_T$ is divisible by the monomial of a split if and only if $z_{\operatorname{supp}(\mathbf{z})} w_{T_{\bullet,1}}$ is divisible by the monomial of a split.
    In this case, we have $T_{\bullet, 1} = D \sqcup A$ in~\eqref{monomial of Sp split}, and so there exists some $t$ such that $0 \leq t \leq \ell(\sigma)$ and
    \[
    T_{t,1} = d_1 < c_r < \cdots < c_1 < b_1 < \cdots < b_r,
    \]
    where $r+t = k+1$ (and if $t=0$, then omit the $d_1$), and $\{(c_i, b_i)\}_{i=1}^r$ is an antichain contained in $\operatorname{supp}(\mathbf{z})$.
    Equivalently, $\operatorname{width}(\operatorname{supp}(\mathbf{z})|_{i > T_{t,1}}) \geq k-t+1$.
    It follows that $\mathbf{z}$ occurs in $(*)$ if and only if
    \[
    \operatorname{width}(\operatorname{supp}(\mathbf{z})|_{i > T_{t,1}}) \leq k-t, \qquad \text{for all } 0 \leq t \leq \ell(\sigma).
    \]
    But by~\eqref{Delta T}, this is equivalent to the condition $\operatorname{supp}(\mathbf{z}) \in \Delta_T$.
    Thus, since $(*)$ has a basis consisting of those monomials $\mathbf{z}$ such that $\operatorname{supp}(\mathbf{z}) \in \Delta_T$, it follows from the definition of a Stanley--Reisner ring that $(*) = \C[\Delta_T]$ as claimed.
    This completes the proof for $H = \Sp_{2k}$.

    The proof for $H = \GL_k$ is essentially identical, with the following modifications.
    By Lemma~\ref{lemma:splits J J'}, $\mathbf{z} w_T$ is divisible by the monomial of a split if and only if $z_{\operatorname{supp}(\mathbf{z})} w_{T^+_{\bullet,1}} w^*_{T^-_{\bullet,1}} $ is divisible by the monomial of a split.
    In this case, we have $T^+_{\bullet,1} = D \sqcup A$ and $T^-_{\bullet,1} = D^* \sqcup A^*$ in~\eqref{monomial of split GL}, and so there exist some $u$ and $t$ such that
    \begin{align*}
        T^+_{u,1} &  = d_1 < c_s < \cdots < c_1 < b_1 < \cdots < b_r,\\
        T^-_{t,1} &= d^*_1 < c^*_r < \cdots < c^*_1 < b^*_1 < \cdots < b^*_s,
    \end{align*}
    where $r+s+t+u = k+1$, and $\{(c^*_i, b_i)\}_{i=1}^{r} \cup \{(b^*_i, c_i)\}_{i=1}^{s}$ is an antichain contained in $\operatorname{supp}(\mathbf{z})$.
    Equivalently, $\operatorname{width}(\operatorname{supp}(\mathbf{z})|_{i > T^-_{t,1}, \: j> T^+_{u,1}}) \geq k-t-u+1$.
    The rest of the proof is the same as in the $\Sp_{2k}$ case above.
\end{proof}

\begin{proof}[Proof of Theorem~\ref{thm:Stanley decomps and HS} \textup{($H = \GL_k$ or $\Sp_{2k}$)}]

For each group, the proof will proceed as follows.
Suppose that $k< \#\delta(\mathbf{P})$.
For each $\mathbf{E} \in \E$ we will define a certain abstract simplicial complex $\Delta_{\mathbf{E}}$ on a subset of $\mathbf{P}$, in such a way that 
\begin{align}
    \Delta_T &= \Delta_{{\rm end}(T)} \text{ for all }T \in \mathcal{T}(\sigma); \label{Delta T = Delta end T} \\
    \mathcal{F}(\Delta_{\mathbf{E}}) & = \mathcal{F}_{\mathbf{E}} \text{ as defined in~\eqref{F_E}, for all $\mathbf{E} \in \E$}. \label{F Delta E = F E}
\end{align}
Then we will exhibit a certain shelling of each $\Delta_{\mathbf{E}}$, in such a way that
\begin{equation}
    \label{res = cor}
    \operatorname{res}(\F) = \cor(\F) \text{ as defined in~\eqref{corner Sp}, for all $\F \in \mathcal{F}(\Delta_{\mathbf{E}})$}.
\end{equation}
From this it will follow that
\begin{align*}
    \C[Z] \backslash \ker \widetilde{\pi}^* &= \bigoplus_{\sigma \in \Sigma} \left(\bigoplus_{T \in \mathcal{T}(\sigma)} \C[\Delta_T] \: w_T \right) && \text{by Lemma~\ref{lemma: standard monomials}} \\
    &= \bigoplus_{\sigma \in \Sigma} \left(\bigoplus_{T \in \mathcal{T}(\sigma)} \C[\Delta_{{\rm end}(T)}] \: w_T \right) && \text{by~\eqref{Delta T = Delta end T}} \\
    &= \bigoplus_{\sigma \in \Sigma} 
    \left( \bigoplus_{\mathbf{E} \in \E} 
    \left(
    \bigoplus_{T \in \mathcal{T}_{\mathbf{E}}(\sigma)} \C[\Delta_\mathbf{E}] \: w_T
    \right)
    \right) && \text{by~\eqref{T_E}} \\
    &= \bigoplus_{\sigma \in \Sigma} 
    \left( \bigoplus_{\mathbf{E} \in \E} 
    \left(
    \bigoplus_{T \in \mathcal{T}_{\mathbf{E}}(\sigma)} \left(
    \bigoplus_{\F \in \mathcal{F}(\Delta_{\mathbf{E}})}
    \C[\F] \: z_{{\rm res}(\F)}
    \right) w_T
    \right)
    \right) && \text{by~\eqref{Stanley decomp SR ring}} \\
    &= \bigoplus_{\sigma \in \Sigma} 
    \left( \bigoplus_{\mathbf{E} \in \E} 
    \left(
    \bigoplus_{T \in \mathcal{T}_{\mathbf{E}}(\sigma)} \left(
    \bigoplus_{\F \in \mathcal{F}_{\mathbf{E}}}
    \C[\F] \: z_{\cor(\F)}
    \right) w_T
    \right)
    \right) && \text{by~\eqref{F Delta E = F E} and~\eqref{res = cor}} \\
    &= \bigoplus_{\sigma \in \Sigma} \left(
    \bigoplus_{(\F, T) \in \mathcal{J}(\sigma)}
    \C[\F] \: z_{\cor(\F)} \: w_T
    \right) && \text{by Definition~\ref{def:jellyfish}}.
\end{align*}
Recalling from Lemma~\ref{lemma:Jackson Grobner} that the images (under $\widetilde{\pi}^*$) of the monomials in $\C[Z] \backslash \ker \widetilde{\pi}^*$ furnish a linear basis for $\C[W]^N$, we conclude that the direct sums above are preserved by $\widetilde{\pi}^*$.
Therefore, applying $\widetilde{\pi}^*$ to the last displayed equation above, we have
\[
\C[W]^N = \bigoplus_{\sigma \in \Sigma} \left( \bigoplus_{(\F, T) \in \mathcal{J}(\sigma)} \C[f_{ij} : (i,j) \in \F] \: f_{\cor(\F)} \: {\textstyle \det_T} \right).
\]
By~\eqref{wt det GL} and~\eqref{wt det Sp}, the $\sigma$-component above is precisely $\C[W]^N_{\sigma^*}$, so that
\[
\C[W]^N_{\sigma^*} =  \bigoplus_{(\F, T) \in \mathcal{J}(\sigma)} \C[f_{ij} : (i,j) \in \F] \: f_{\cor(\F)} \: {\textstyle \det_T}.
\]
Applying the inverse of the isomorphism $\Psi$ in~\eqref{Psi on f det}, we obtain the following decomposition of $\widetilde{M}_\sigma$, which (as a graded vector space) is isomorphic to $M_\sigma$:
\[
    M_\sigma = \bigoplus_{(\F, T) \in \mathcal{J}(\sigma)} \C[f_{ij} : (i,j) \in \F] \: f_{\cor(\F)} \: {\textstyle \phi_T},
\]
which establishes the $k < \#\delta(\mathbf{P})$ case in Theorem~\ref{thm:Stanley decomps and HS}.
Note that this result is valid for all $\sigma \in \widehat{H}$ (not just $\Sigma$):
on one hand, by definition (see Theorem~\ref{thm:Howe duality}), we have $\sigma \in \Sigma$ if and only if $M_\sigma \neq 0$, and on the other hand, by observation~\eqref{Sigma T nonempty}, we have $\sigma \in \Sigma$ if and only if $\mathcal{J}(\sigma) \neq \varnothing$.

If $k \geq \#\delta(\mathbf{P})$, then observe that there are no $H$-splits (see Definition~\ref{def:Sp split}).
Consequently, in Lemma~\ref{lemma: standard monomials}, each $\Delta_T$ is just the power set of $\mathbf{P}$, and has a unique facet $\mathbf{P}$.
Thus in the case $k \geq \#\delta(\mathbf{P})$, we have
\[
\C[Z] \backslash \ker \widetilde{\pi}^* = \bigoplus_{\sigma \in \Sigma} 
\left(
\bigoplus_{T \in \mathcal{T}(\sigma)}
\C[\mathbf{P}] \: w_T
\right),
\]
and now the argument from above yields the $k \geq \#\delta(\mathbf{P})$ result in Theorem~\ref{thm:Stanley decomps and HS}.

It remains to fill in the initial details above, namely the definition of $\Delta_{\mathbf{E}}$ and the verification of its three properties~\eqref{Delta T = Delta end T}--\eqref{res = cor}.
We do this below separately for the $\Sp_{2k}$ and $\GL_k$ cases.

Let $H = \Sp_{2k}$.
From now on, let $\mathbf{E} = \{(i_t, n) : 1 \leq t \leq k\} \in \E$, where $0 \eqcolon i_0 < i_1 < \cdots < i_k \leq n-1$.
Define the following abstract simplicial complex:
\begin{equation}
    \label{Delta_E Sp}
    \Delta_{\mathbf{E}} \coloneqq \left\{ \mathbf{S} \subseteq \mathbf{P} : \begin{array}{l}\operatorname{width} \left( \mathbf{S}|_{i > i_t} \right) \leq k-t \\ \textup{for all } 0 \leq t \leq k \end{array} \right\}.
\end{equation}

To prove~\eqref{Delta T = Delta end T}, namely that $\Delta_T = \Delta_{{\rm end}(T)}$, let $T \in \mathcal{T}(\sigma)$ with ${\rm end}(T) = \mathbf{E}$;
we must show that for all $\mathbf{S} \subseteq \mathbf{P}$,
\begin{equation}
    \label{Delta T = Delta end T concrete Sp}
    \begin{array}{c}
    {\rm width}(\mathbf{S}|_{i> T_{t,1}}) \leq k - t \text{ for all $0 \leq t \leq \ell(\sigma)$} \\
    \Updownarrow \\
    {\rm width}(\mathbf{S}|_{i> i_t}) \leq k - t \text{ for all $0 \leq t \leq k$}.
    \end{array}
\end{equation}
By definition, we have both $T_{0,1} = 0$ and $i_0 = 0$, so the $t=0$ case in~\eqref{Delta T = Delta end T concrete Sp} is done.
Next, suppose $1 \leq t \leq \ell(\sigma)$.
By~\eqref{end T details Sp}, if $T_{t,1} \leq \hat{\imath}_t$, then $i_t = T_{t,1}$ and so we are done.
Thus suppose that $T_{t,1} > \hat{\imath}_t$, so that $i_t = \hat{\imath}_t$; in this case, we claim that both propositions in the biconditional~\eqref{Delta T = Delta end T concrete Sp} are automatically true.
On one hand, if $t \geq 2(k+1) - n$, then by~\eqref{end T details Sp} we have $i_t = \hat{\imath}_t = n - 2(k-t) - 1$.
Observing that for all $0 \leq a \leq n$ we have
\begin{equation}
    \label{width general Sp}
    {\rm width}(\mathbf{P}|_{i > a}) = \lfloor (n-a) / 2 \rfloor,
\end{equation}
and setting $a = i_t = n - 2(k-t) - 1$, we obtain
\[
    {\rm width}(\mathbf{P}|_{i > i_t}) = k - t.
\]
Thus the top proposition in~\eqref{Delta T = Delta end T concrete Sp} is true for all $\mathbf{S} \subseteq \mathbf{P}$;
likewise, since the width~\eqref{width general Sp} weakly decreases as $a$ increases, and since $T_{t,1} > i_t$, the bottom proposition in~\eqref{Delta T = Delta end T concrete Sp} is also true for all $\mathbf{S} \subseteq \mathbf{P}$, so we are done.
On the other hand, if $t < 2(k+1) - n$, then $i_t = \hat{\imath}_t = t \geq n-2(k-t) - 1$;
thus again, both propositions in~\eqref{Delta T = Delta end T concrete Sp} are true for all $\mathbf{S} \subseteq \mathbf{P}$, so we are done.
The last remaining case is $t > \ell(\sigma)$;
in this range, by~\eqref{end T details Sp} we have $i_t = \hat{\imath}_t$, and so again the bottom proposition in~\eqref{Delta T = Delta end T concrete Sp} is true for all $\mathbf{S} \subseteq \mathbf{P}$.
This completes the proof of~\eqref{Delta T = Delta end T}.

Next we will prove~\eqref{F Delta E = F E}, namely that the facet set $\mathcal{F}(\Delta_{\mathbf{E}})$ coincides with the set $\mathcal{F}_{\mathbf{E}}$ in~\eqref{F_E}.
Let $\F \in \mathcal{F}_{\mathbf{E}}$.
Each subset $\F|_{i > i_t}$ can be partitioned into $k-t$ many chains: namely, take its intersections with $\mathbf{L}_{t+1}, \mathbf{L}_{t+2}, \ldots, \mathbf{L}_{k}$, where for each $\ell > i_t$ (in order to cover $\mathbf{A}$) the starting point of $\mathbf{L}_\ell$ is extended from $(t, b_t)$ to $(t, t+1)$.
It follows that each ${\rm width} (\F|_{i > i_t}) \leq k-t$, and so $\F \in \Delta_{\mathbf{E}}$.
To show maximality, let $\F' = \F \sqcup \{(i',j')\}$ for some $(i',j') \in \mathbf{P} \backslash \F$.
Consider the antidiagonal $\mathbf{D} = \{ (i,j) \in \mathbf{P} : i+j = i'+j' \}$ containing $(i',j')$.
There is some $t$, where $0 \leq t \leq k$, such that $\mathbf{D}$ intersects precisely the lattice paths $\mathbf{L}_{t+1}, \mathbf{L}_{t+2}, \ldots, \mathbf{L}_{k}$, and thus we have $\mathbf{D} \subseteq \mathbf{P}|_{i > i_t}$.
But $\mathbf{D} \cap \F$ is an antichain of size $k-t$, and so $\mathbf{D} \cap \F'$ is an antichain of size $k-t+1$ contained in $\mathbf{P}|_{i > i_t}$.
Thus we have ${\rm width}(\F'|_{i > i_t}) > k-t$, and so $\F' \notin \Delta_{\mathbf{E}}$.
Therefore, $\F$ is a maximal element (\ie a facet) of $\Delta_{\mathbf{E}}$, and hence $\mathcal{F}_{\mathbf{E}} \subseteq \mathcal{F}(\Delta_{\mathbf{E}})$.
Conversely, any facet $\F \in \mathcal{F}(\Delta_{\mathbf{E}})$ can be decomposed into the disjoint union of saturated chains $\mathbf{C}_1, \ldots, \mathbf{C}_k$, where the minimal element of $\mathbf{C}_t$ is the minimal element $(t, t+1)$ of $\mathbf{P} \backslash (\mathbf{C}_1 \sqcup \cdots \sqcup \mathbf{C}_{t-1})$, and the maximal element of $\mathbf{C}_t$ is the maximal element $(i_t, n)$ of $\mathbf{P}|_{i \leq i_t}$.
But then $\F = \coprod_{t=1}^k \mathbf{C}_{t} = (\mathbf{A} \backslash \delta(\mathbf{A})) \sqcup \coprod_{t=1}^k \mathbf{L}_t$, where $\mathbf{L}_t = \mathbf{C}_t \backslash (\mathbf{A} \backslash \delta(\mathbf{A}))$ is a southeast lattice path from $(t, b_t)$ to $(i_t, n)$, and hence by~\eqref{F} we have $\F \in \mathcal{F}_{\mathbf{E}}$.
Thus we have $\mathcal{F}(\Delta_{\mathbf{E}}) \subseteq \mathcal{F}_{\mathbf{E}}$,  and the claim~\eqref{F Delta E = F E} follows.

Finally, to show~\eqref{res = cor}, namely that ${\rm res}(\F) = \cor(\F)$ for all $\F \in \mathcal{F}(\Delta_{\mathbf{E}})$, we need to exhibit a shelling of $\Delta_{\mathbf{E}}$ such that the restrictions coincide with the corners defined in~\eqref{corner Sp}.
(Note that $\Delta_{\mathbf{E}}$ is pure, since its facets all have the same cardinality; see the proof of Corollary~\ref{cor:HS}.)
Let $\F = (\mathbf{A} \backslash \delta(\mathbf{A})) \sqcup (\mathbf{L}_1 \sqcup \cdots \sqcup \mathbf{L}_k) \in \mathcal{F}(\Delta_{\mathbf{E}})$ as in~\eqref{F}.
Denote the southwest boundary of a subset $\mathbf{S} \subseteq \mathbf{P}$ by
\begin{equation}
    \label{omega}
    \gamma(\mathbf{S}) \coloneqq \{ (i,j) \in \mathbf{S} : (i+1, j-1) \notin \mathbf{S} \}.
\end{equation}
Note that each $\mathbf{L}_t$ is contained in the poset interval between $(t, b_t)$ and $(i_t, n)$.
In fact, since $\mathbf{L}_t$ cannot intersect $\mathbf{L}_{t+1}$, each path $\mathbf{L}_t$ is contained in the subset $\mathbf{Q}_t$ defined recursively for $t = k, k-1, \ldots, 2, 1$ as follows:
\[
    \mathbf{Q}_t \coloneqq \Big\{ (i,j) \in \mathbf{P} : (t, b_t) \leq (i,j) \leq (i_t, n) \Big\} \Big\backslash \bigcup_{\ell=t+1}^k \! \gamma(\mathbf{Q}_{\ell}).
\]
Now equip each subposet $\mathbf{Q}_t$ with the labeling $\alpha_t$ defined as follows:
\begin{align}
\label{alpha labeling}
\begin{split}
    \alpha_t\Big((i-1,j), \: (i, j) \Big) &= 0,\\[1ex]
    \alpha_t\Big((i,j-1), \: (i, j) \Big) &= \begin{cases}
        0, & (i, j) \in \gamma(\mathbf{Q}_t),\\
        1 & \text{otherwise}.
    \end{cases}
\end{split}
\end{align}
(Note that if $t < 2(k+1) - n$, then $\mathbf{Q}_t$ consists of the single point $(t,b_t) = (i_t, n)$, so there are no edges to label.)
If $(i,j) < (i',j')$, then there is a unique saturated chain $(i,j) \lessdot \cdots \lessdot (i', j')$ with label sequence $(0,\ldots,0)$, obtained by stepping south whenever possible, and east otherwise.
Therefore $\alpha_t$ is an EL-labeling of $\mathbf{Q}_t$, by Definition~\ref{def:EL-labeling}.
Thus by Lemma~\ref{lemma:EL labeling induces shelling}, $\alpha_t$ induces a shelling order on the facets of $\Delta(\mathbf{Q}_t)$, or equivalently, on the lattice paths $\mathbf{L}_t$ from $(t,b_t)$ to $(i_t, n)$;
moreover, by~\eqref{R equals descents}, with respect to this shelling order, ${\rm res}(\mathbf{L}_t)$ is the set of descents of $\mathbf{L}_t$.
With respect to $\alpha$, a descent of $\mathbf{L}_t$ corresponds to a $(1,0)$ in its label sequence.
In turn, a $(1,0)$ occurs in the label sequence whenever $\mathbf{L}_t$ contains three points of the form $(i,j-1)$, $(i, j)$, and $(i+1, j)$ such that $(i, j) \notin \gamma(\mathbf{Q}_t)$.
Note that $(i, j) \in \gamma(\mathbf{Q}_t)$ if and only if $(i + \ell, j-\ell) \in \mathbf{L}_{t+\ell}$ for all $1 \leq \ell \leq k-t$.
Comparing this with the definition~\eqref{corner Sp} of a corner, we have that ${\rm res}(\mathbf{L}_t) = \cor(\F) \cap \mathbf{L}_t$.
Note that $\gamma(\mathbf{Q}_t)$ is the facet of $\Delta(\mathbf{Q}_t)$ with label sequence $(0, \ldots, 0)$;
thus, since $\gamma(\mathbf{Q}_t) \cap \mathbf{Q}_{t-1} =\varnothing$, the shellings induced by the $\alpha_t$'s extend to a shelling of $\Delta_{\mathbf{E}}$, where we order the facets $\F \in \mathcal{F}(\Delta_{\mathbf{E}})$ lexicographically by concatenating the label sequences of $\mathbf{L}_1, \ldots, \mathbf{L}_k$ in that order.
With respect to this shelling order, we have ${\rm res}(\F) = \cor(\F)$.
This establishes the claim~\eqref{res = cor}, and completes the proof of Theorem~\ref{thm:Stanley decomps and HS} for $H = \Sp_{2k}$.

Now let $H = \GL_k$; again, we must prove~\eqref{Delta T = Delta end T}--\eqref{res = cor}.
The arguments are the same as in the $\Sp_{2k}$ case above, with the following modifications.

Recall the parameters $k^+$ and $k^-$ from~\eqref{k+ k-}.
From now on, let $\mathbf{E} = \{(p, j_u) : 1 \leq u \leq k^+\} \sqcup \{(i_t, q) : 1 \leq t \leq k^-\} \in \E$, where $0 \coloneqq j_0 < j_1 < \cdots < j_{k^+} \leq q$ and $0 \coloneqq i_0 < i_1 < \cdots < i_{k^-} \leq p$.
Define
\[
\Delta_{\mathbf{E}} \coloneqq \left\{ \mathbf{S} \subseteq \mathbf{P} : \begin{array}{l}\operatorname{width} \left( \mathbf{S}|_{i > i_t, \: j > j_u} \right) \leq k-t-u \\ \text{for all } 0 \leq t \leq k^- \text{ and } 0 \leq u \leq k^+ \end{array} \right\}.
\]
To prove~\eqref{Delta T = Delta end T}, let $T = (T^+, T^-) \in \mathcal{T}(\sigma)$ with ${\rm end}(T) = \mathbf{E}$;
we must show that for all $\mathbf{S} \subseteq \mathbf{P}$,
\begin{equation}
    \label{Delta T = Delta end T concrete GL}
    \begin{array}{c}
    {\rm width}(\mathbf{S}|_{i> T^-_{t,1}, \: j > T^+_{u,1}}) \leq k - t - u \text{ for all $0 \leq t \leq \ell(\sigma^-)$ and $0 \leq u \leq \ell(\sigma^+)$} \\
    \Updownarrow \\
    {\rm width}(\mathbf{S}|_{i> i_t, \: j > j_u}) \leq k - t - u \text{ for all $0 \leq t \leq k^-$ and $0 \leq u \leq k^+$}.
    \end{array}
\end{equation}
Since by definition $i_0 = T^-_{0,1} = 0$ and $j_0 = T^+_{0,1} = 0$, the case $t=u=0$ in~\eqref{Delta T = Delta end T concrete GL} is done.
Next observe that for all $0 \leq a \leq p$ and $0 \leq b \leq q$,
\begin{equation}
    \label{width observation GL}
    {\rm width}(\mathbf{P}|_{i>a, \: j> b}) = \min\{p-a, \: q-b\}.
\end{equation}
Moreover, $i_t \geq t$ and $j_u \geq u$.
It follows that if $t \leq k-q$, then $k - t - u \geq q-u \geq q - j_u$; therefore, ${\rm width}(\mathbf{P}|_{i > i_t, \: j > j_u}) \leq k-t-u$ for all $t$.
By symmetry, if $u \leq k-p$, then ${\rm width}(\mathbf{P}|_{i>i_t, \: j > j_u}) \leq k-t-u$ for all $u$.
Thus, if $t \leq k-q$ or $u \leq k-p$, then the bottom proposition in~\eqref{Delta T = Delta end T concrete GL} is automatically true;
in either case, since each $T^-_{t,1} \geq i_t$ and $T^+_{u,1} \geq j_u$, the top proposition in~\eqref{Delta T = Delta end T concrete GL} is automatically true as well, and we are done.
Therefore we may assume that $t > k-q$ and $u > k-p$;
thus by~\eqref{end T GL details} we have $j_u = T^+_{u,1}$, and $i_t = \min\{T^-_{t,1}, \hat{\imath}_t\}$, where $\hat{\imath}_t$ is given by~\eqref{i hat GL}.
If $T^-_{t,1} \leq \hat{\imath}_t$, then $i_t = T^-_{t,1}$, and we are done.
On the other hand, if $T^-_{t,1} > \hat{\imath}_t$, then $i_t = \hat{\imath}_t$ as in~\eqref{i hat GL}.
By~\eqref{width observation GL}, we have ${\rm width}(\mathbf{P}|_{i>\hat{\imath}_t, \: j > j_u}) = m \coloneqq \min\{p-\hat{\imath}_t, \: q-j_u\}$.
By~\eqref{P Q}, we have
\[
\underbrace{\{0,1,\ldots,m-1\}}_{\text{cardinality $m$}} \subseteq \underbrace{\{ p-\hat{\imath}_{t'} : t' > t \} \sqcup \{q - j_{u'} : u' > u\}}_{\text{cardinality $k-t-u$}},
\]
and therefore the bottom proposition in~\eqref{Delta T = Delta end T concrete GL} is true.
Since we assumed that $T^-_{t,1} > \hat{\imath}_t$, the top proposition is true as well.
This establishes~\eqref{Delta T = Delta end T concrete GL}, and thus the claim~\eqref{Delta T = Delta end T}.

To prove~\eqref{F Delta E = F E}, let $\F \in \mathcal{F}_{\mathbf{E}}$.
Note that the subset $\F|_{i > i_t, \: j> j_u}$ can be partitioned into $k-t-u$ many chains: namely, its intersections with $\mathbf{L}^+_{u+1}, \ldots, \mathbf{L}^+_{k^+}$ and $\mathbf{L}^-_{t+1}, \ldots, \mathbf{L}^-_{k^-}$, upon extending these chains to cover $\mathbf{A}$ as necessary.
Thus ${\rm width}(\F|_{i > i_t, \: j> j_u}) \leq k - t- u$, and so $\F \in \Delta_{\mathbf{E}}$.
The maximality of $\F$ follows from the same argument as in the $\Sp_{2k}$ case above, where we try to add a point $(i', j') \notin \F$ and observe that its antidiagonal $\mathbf{D}$ intersects precisely the paths $\mathbf{L}^+_{u+1}, \mathbf{L}^+_{u+2}, \ldots, \mathbf{L}^+_{k^+}$ and $\mathbf{L}^-_{t+1}, \mathbf{L}^-_{t+2}, \ldots, \mathbf{L}^-_{k^-}$ for some $t$ and $u$;
then $\mathbf{D} \subseteq \mathbf{P}|_{i> i_t, \: j > j_u}$, and the rest of the argument goes through.
Conversely, just as in the $\Sp_{2k}$ case, any facet $\F \in \mathcal{F}(\Delta_{\mathbf{E}})$ can be decomposed into a disjoint union of saturated chains, which takes the form~\eqref{F} for some element of $\mathcal{F}_{\mathbf{E}}$.

\label{res cor GL} Finally, to prove~\eqref{res = cor}, let $\gamma(\mathbf{S})$ be as in~\eqref{omega}.
For $u = 1,\ldots, k^+$, recursively define
\[
\mathbf{Q}^+_u \coloneqq \{ (i,j) \in \mathbf{P} : (a_u, u) \leq (i,j) \leq (p, j_u) \} \backslash \gamma(\mathbf{Q}^+_{u-1}),
\]
and for $t = k^-, \ldots, 2, 1$, recursively define
\[
\mathbf{Q}^-_t \coloneqq \{ (i,j) \in \mathbf{P} : (t, b_t) \leq (i,j) \leq (i_t, q) \} \backslash \gamma(\mathbf{Q}^-_{t+1}),
\]
where the role of $\gamma(\mathbf{Q}^-_{t+1})$ is played by $\gamma(\mathbf{Q}^+_{k^+})$ in the base case $t = k^-$.
Equip each subposet $\mathbf{Q}^+_u$ and $\mathbf{Q}^-_t$ with the EL-labeling $\alpha^+_u$ or $\alpha^-_t$, respectively, in the same way as in~\eqref{alpha labeling}. 
The rest of the proof is identical to the $\Sp_{2k}$ case above, upon extending the shellings induced by the $\alpha^-_t$'s and $\alpha^+_u$'s to all of $\Delta_{\mathbf{E}}$, by concatenating the label sequences from northeast to southwest.
\end{proof}

\subsection*{The orthogonal group}

Whereas for $\GL_k$ and $\Sp_{2k}$ we appealed to the results of Jackson~\cite{Jackson} in Lemma~\ref{lemma:Jackson Grobner}, there is no analogous result for $\O_k$.
In particular, the treatment of $\O_k$ in~\cite{Jackson} is restricted to the case $k > 2n$ (see pp.~64 and 75), in which case $W$ was cofree and there was no occasion to define analogues of splits or their monomials (see Definition~\ref{def:Sp split} above).
For this reason, we briefly develop the requisite theory below in the special case $\sigma = (1^m)$.

\subsubsection*{Fundamental theorems of tensor invariants for $\O_k$}

We follow the exposition in Lehrer--Zhang~\cite{LZ}*{\S3.1}.
Let $s$ be a positive integer.
Let $\mathcal{M}(2s)$ denote the set of matchings on $[2s]$, where a \emph{matching} is a set
    \begin{equation*}
        \label{matching}
    \bm{\mu} = \{ \{i_1, j_1\}, \ldots, \{i_s, j_s\} \}
    \end{equation*}
of two-element subsets such that $\{i_1, \ldots, i_s, j_1, \ldots, j_s\} = [2s]$.
To each matching $\bm{\mu} \in \mathcal{M}(2s)$, we associate the following complete contraction in $(V^{\otimes 2s})^*$:
    \begin{equation}
        \label{theta_mu}
        \theta_{\bm{\mu}} : v_1 \otimes \cdots \otimes v_{2s} \longmapsto \prod_{\mathclap{\{i,j\} \in \bm{\mu}}} b(v_i, v_j),
    \end{equation}
 where $b$ is the nondegenerate symmetric bilinear form on $V$ that is preserved by~$\O_k$.
By the first fundamental theorem of tensor invariants for $\O_k$ (see~\cite{LZ}*{Thm.~3.1} or~\cite{GW}*{Thm.~5.3.5}), we have
    \begin{equation}
        \label{FFT O}
        [ (V^{\otimes 2s})^* ]^{\O_k} = {\rm span} \Big\{ \theta_{\bm{\mu}} : \bm{\mu} \in \mathcal{M}(2s) \Big\}.
    \end{equation}
    
To describe the relations among these $\theta_{\bm{\mu}}$'s, let $X = \{x_1, \ldots, x_{k+1}\}$ and $Y = \{y_1, \ldots, y_{k+1}\}$ be disjoint subsets of $[2s]$.
We assume the labeling convention $x_1 < \cdots < x_{k+1}$ and $y_1 < \cdots < y_{k+1}$.
Let $\mathfrak{S}_{k+1}$ denote the symmetric group on $k+1$ letters, and let $\tau \in \mathfrak{S}_{k+1}$.
Let $\bm{\nu}$ be a matching on $[2s] \backslash (X \sqcup Y)$.
Define the following matching in $\mathcal{M}(2s)$ by letting $\tau$ act on $X$:
    \[
    \tau(X, Y, \bm{\nu}) \coloneqq \left\{ \{x_{\tau(\ell)}, y_{\ell}\} \right\}_{\ell=1}^{k+1} \sqcup \bm{\nu}.
    \]
Then define
    \begin{equation}
        \label{Theta XY nu}
        \Theta(X,Y,\bm{\nu}) \coloneqq \sum_{\mathclap{\tau \in \mathfrak{S}_{k+1}}} {\rm sgn}(\tau) \: \theta_{\tau(X,Y,\bm{\nu})},
    \end{equation}
where ${\rm sgn}(\tau) \in \{\pm 1\}$ is the signature of $\tau$.
We collect all such elements in the following set:
    \[
        \label{R}
        \mathcal{R} \coloneqq \left\{ \Theta(X,Y,\bm{\nu}) : \begin{array}{l}
            \text{$X,Y$ are disjoint $(k+1)$-element subsets of $[2s]$},\\
            \text{$\bm{\nu}$ is a matching on $[2s] \backslash (X \sqcup Y)$}
        \end{array}
        \right\}.
    \]
    By the second fundamental theorem of tensor invariants for $\O_k$ (see~\cite{LZ}*{Thm.~3.4}), any linear relation among the $\theta_{\bm \mu}$'s is a linear consequence of the relations $\Theta = 0$, for all $\Theta \in \mathcal{R}$.
    
    Combining~\eqref{theta_mu} with~\eqref{Theta XY nu}, we expand each $\Theta \in \mathcal{R}$ as follows:
    \begin{align}
    \Theta(X, Y, \bm{\nu}) : v_1 \otimes \cdots \otimes v_{2s} \longmapsto & \; \left[\sum_{\tau \in \mathfrak{S}_{k+1}} {\rm sgn}(\tau) \: \prod_{\ell = 1}^{k+1} b(v_{x_{\tau(\ell)}}, v_{y_\ell}) \right]    
    \prod_{\{i,j\} \in \bm{\nu}} \!\!\! b(v_i, v_j) \nonumber \\[1ex]
    = & \; \det \! \Big[ b(v_{x_\ell}, v_{y_{m}}) \Big]_{1 \leq \ell, m \leq k+1} \cdot    
    \prod_{\{i,j\} \in \bm{\nu}} \!\!\! b(v_i, v_j). \label{det for Theta}
    \end{align}
    By the straightening law of De Concini--Procesi~\cite{DeConciniProcesi}*{Lemma~5.3}, based on that of Doubilet--Rota--Stein~\cite{DRS}*{Thm.~1, p.~198}, the determinant of the matrix $[ b(v_{x_\ell}, v_{y_{m}})]$ in~\eqref{det for Theta} can be written as a linear combination of determinants of matrices $[b(v_{x'_\ell}, v_{y'_m})]$ such that $x'_\ell \leq y'_\ell$ for all $1 \leq \ell \leq k + 1$.
    Hence it suffices to consider the subset
    \[
        \mathcal{R}' \coloneqq 
        \Big\{ \Theta(X,Y,\bm{\nu}) \in \mathcal{R} : 
            \text{$x_\ell \leq y_\ell$ for all $1 \leq \ell \leq k+1$}
        \Big\},
    \]
    and we may sharpen the second fundamental theorem as follows:
    \begin{equation}
        \label{SFT O}
        \begin{array}{l}
        \text{any linear relation among the $\theta_{\bm \mu}$'s is a linear consequence} \\
        \text{of the relations $\Theta = 0$, for all $\Theta \in \mathcal{R}'$}.
        \end{array}
    \end{equation}
    Note that if $s \leq k$, then $\mathcal{R} = \mathcal{R}' = \varnothing$, and thus by~\eqref{SFT O} the $\theta_{\bm{\mu}}$'s furnish a basis for $[(V^{\otimes 2s})^*]^{\O_k}$.

\subsubsection*{Proving the main result for $\O_k$}

We introduce the following orthogonal analogue of Definition~\ref{def:Sp split}.

\begin{defi}[Monomial of a split for $H = \O_k$]
\label{def:split O}
    An \emph{$\O_{k}$-split of type $\sigma = (1^m)$} consists of four (possibly empty) subsets of $[n]$, namely $A = \{a_i\}_{i=1}^v$, $B = \{b_i\}_{i=1}^r$, $C = \{c_i\}_{i=1}^r$, $D = \{d_i\}_{i=1}^t$, such that $t+v=m$ and $r + t = k+1$, where
    \[
    b_1 < \cdots < b_r < d_1 < \cdots < d_t \text{ and } a_1 < \cdots < a_v < d_1 \text{ and } c_1 < \cdots < c_r \text{ and each } b_i \leq c_i.
    \]
The \emph{monomial} of an $\O_k$-split is
 \begin{equation}
    \label{monomial of O split}
 \left( \prod_{i=1}^r f_{b_i, c_i} \right) \phi_{A \sqcup D},
 \end{equation}
 with $f_{ij}$ and $\phi_T$ as defined in~\eqref{table P fij} and~\eqref{phi O}, respectively.
\end{defi}

\begin{lemma}
    \label{lemma:Ok standard monomials}
    Let $H = \O_k$, and put $\sigma = (1^m)$, where $0 \leq m \leq k$.
    Let $\mathbf{f}$ denote a monomial in the $f_{ij}$'s.
    A linear basis for $M_\sigma$ is given by the set of all monomials $\mathbf{f} \phi_T$ which are not divisible by the monomial of an $\O_k$-split.
\end{lemma}

\begin{proof}
    
    Let $\mathbf{d} = (d_1, \ldots, d_n)$ be an $n$-tuple of nonnegative integers, and put $d \coloneqq d_1 + \cdots + d_n$.
    Let $\C[V^n]_{\mathbf{d}}$ denote the multigraded component of $\C[V^n]$ spanned by monomials $\mathbf{m}$ such that $\deg_{x_{\bullet j}}(\mathbf{m}) = d_j$ for all $1 \leq j \leq n$.
    Likewise, let $M_{\sigma, \mathbf{d}}$ denote the multigraded component of $M_\sigma$ spanned by those maps $\phi$ such that $\deg_{x_{\bullet j}}(\phi) = d_j$ for all $1 \leq j \leq n$.
    Then we have
    \begin{equation}
        \label{M sigma d decomp}
        M_\sigma = \bigoplus_{\mathbf{d} \in \mathbb{N}^n} M_{\sigma, \mathbf{d}}.
    \end{equation}
    Define the surjective $\O_k$-equivariant map
    \[
    \Phi: [(V^{\otimes d+m})^*]^{\O_k} \longrightarrow M_{\sigma, \mathbf{d}}
    \]
    to be the composition of the following canonical maps:
    \begin{align}
        \label{Phi Ok proof}
        \begin{split}
        [(V^{\otimes d+m})^*]^{\O_k} \cong [(V^*)^{\otimes d} \otimes V^{\otimes m}]^{\O_k} \longrightarrow & \left[\left(S^{d_1}(V^*) \otimes \cdots \otimes S^{d_n}(V^*) \right) \otimes \Wedge^m V \right]^{\O_k} \\
        \cong & \;(\C[V^n]_{\mathbf{d}} \otimes \Wedge^m V)^{\O_k} \cong M_{\sigma, \mathbf{d}}.
        \end{split}
    \end{align}
    The first isomorphism in~\eqref{Phi Ok proof} is induced by the canonical isomorphism of $\O_k$-representations $V \cong V^*$ via $v \mapsto b( - , v)$.
    The arrow in the first line of~\eqref{Phi Ok proof} denotes the obvious projection given by symmetrizing each block of $d_j$ tensor factors, and then antisymmetrizing the final $m$ tensor factors.
    We call these the \emph{$\mathbf{d}$-symmetrization} and the \emph{$m$-antisymmetrization} by $\Phi$, respectively.
    The final isomorphism in~\eqref{Phi Ok proof} is given by $f \otimes u \mapsto [ w \mapsto f(w)u]$ for all $f \in \C[V^n]_{\mathbf{d}}$ and $u \in \Wedge^m V$.
    
    For each $i \in [d]$, let 
    \begin{equation}
        \label{i bar}
        \bar{\imath} \coloneqq \text{the unique $j \in [n]$ such that $d_1 + \cdots + d_{j-1} < i \leq d_j$}.
    \end{equation}
    In other words, if one partitions $[d]$ into consecutive ``blocks'' of lengths $d_1, \ldots, d_n$, then $\bar{\imath}$ designates the block containing $i$.
    Let
    \[
    \mathcal{M}(\mathbf{d}|m) \coloneqq \left\{ \bm{\mu} \in \mathcal{M}(d+m) : 
    \begin{array}{l} 
    \text{$\{i_\ell, d+\ell\} \in \bm{\mu}$ implies $i_\ell \in [d]$} \\
    \text{for all $1 \leq \ell \leq m$},\\
    \text{and $\overline{\imath_1}, \ldots, \overline{\imath_m}$ are all distinct}
    \end{array}
    \right\}.
    \]
    For $\bm{\mu} \in \mathcal{M}(\mathbf{d}|m)$, define
    \begin{align}
        \label{mu notation}
        \begin{split}
        \widehat{\bm{\mu}} & \coloneqq \Big\{ \{i_\ell,\: d + \ell\} \in \bm{\mu} : 1 \leq \ell \leq m \Big\},\\
        i(\widehat{\bm{\mu}}) & \coloneqq \{ i_\ell : 1 \leq \ell \leq m \} \subseteq [d],\\
        \overline{\imath}(\widehat{\bm{\mu}}) & \coloneqq \{ \overline{\imath_\ell} : 1 \leq \ell \leq m \} \subseteq [n],\\
        {\rm sgn}(\widehat{\bm{\mu}}) & \coloneqq \text{signature of the sequence } (\overline{\imath_\ell})_{1 \leq \ell \leq m},
        \end{split}
    \end{align}
    meaning the signature of the permutation in $\mathfrak{S}_m$ required to arrange $(\overline{\imath_\ell})$ in increasing order.
    It is straightforward to verify from~\eqref{Phi Ok proof} that for every ${\bm{\mu} \in \mathcal{M}(d+m)}$,
    \begin{equation}
        \label{Phi(theta mu)}
    \Phi(\theta_{\bm{\mu}}) = \begin{cases}
        {\rm sgn}(\widehat{\bm{\mu}}) \left(\displaystyle \prod_{ \{i,j\} \in \bm{\mu} \backslash \widehat{\bm{\mu}}} f_{\overline{\imath}, \overline{\jmath}} \right) \phi_{\overline{\imath}(\widehat{\bm{\mu}})}, & \bm{\mu} \in \mathcal{M}(\mathbf{d}|m),\\[4ex]
        0 & \text{otherwise}.
    \end{cases}
    \end{equation}
    Since $\Phi$ is surjective, the first fundamental theorem~\eqref{FFT O} (where we take $s =(d+m)/2$) implies that $M_{\sigma, \mathbf{d}}$ is spanned by $\{ \Phi(\theta_{\bm{\mu}}) : \bm{\mu} \in \mathcal{M}(\mathbf{d}|m)\}$.
    Thus by~\eqref{Phi(theta mu)} we have
    \begin{equation}
        \label{spanning set O}
        M_{\sigma, \mathbf{d}} = {\rm span}\Big\{ \mathbf{f} \phi_T : \text{$\mathbf{f} \in \C[W]_{\mathbf{d}}$ is a monomial in the $f_{ij}$'s, and $T \in  {\textstyle \binom{[n]}{m}}$} \Big\}.
    \end{equation}
    This leads us to impose the following monomial ordering on $M_{\sigma, \mathbf{d}}$.
    We declare $\mathbf{f} \phi_T < \mathbf{f}' \phi_{T'}$ if $T < T'$ with respect to lexicographic order, or if $T = T'$ and $\mathbf{f} < \mathbf{f}'$ with respect to the ordering 
    \begin{equation}
        \label{monomial ordering}
        f_{11} > \cdots > f_{1n} > f_{22} > \cdots f_{2n} > f_{33} > \cdots > f_{n-1,n} > f_{nn}.
    \end{equation}

    Taking $s = (d+m)/2$ in~\eqref{SFT O} and applying the map $\Phi$ in~\eqref{Phi Ok proof}, we see that any linear relation among the $\Phi(\theta_{\bm{\mu}})$'s (or equivalently, among the $\mathbf{f} \phi_T$'s) is a linear consequence of the relations $\Phi(\Theta) = 0$, for all $\Theta \in \mathcal{R}'$.
    But in fact, we now show that certain of these $\Theta$'s are redundant.
    Given $\Theta(X,Y,\bm{\nu}) \in \mathcal{R}'$, we define the set partitions $X = \widecheck{X} \sqcup \widehat{X}$ and $Y = \widecheck{Y} \sqcup \widehat{Y}$ and $\bm{\nu} = \widecheck{\bm{\nu}} \sqcup \widehat{\bm{\nu}}$ as follows:
    \[
    \begin{array}{lll}
    \widecheck{X} \coloneqq \{x_\ell : y_\ell \leq d \}, & & \widehat{X} \coloneqq \{x_\ell : y_\ell > d \},\\
    \widecheck{Y} \coloneqq \{y : y \leq d \}, & & \widehat{Y} \coloneqq \{y : y > d \},\\
    \widecheck{\bm{\nu}} \coloneqq \{ \{i,j\} \in \bm{\nu} : i,j \leq d \}, & & \widehat{\bm{\nu}} \coloneqq \{ \{i,j\} \in \bm{\nu} : \text{$i$ or $j$ is $> d$} \}.
    \end{array}
    \]
    We also recall the notation $i(\widehat{\bm{\nu}})$ from~\eqref{mu notation}.
    We claim that in order to generate all linear relations among the monomials $\mathbf{f} \phi_T$ in $M_{\sigma, \mathbf{d}}$, it suffices to consider the images under $\Phi$ of the elements of the subset
    \begin{equation}
        \label{R''}
        \mathcal{R}'' \coloneqq 
        \left\{ 
        \Theta(X,Y,\bm{\nu}) \in \mathcal{R}' : 
        \begin{array}{l}
            \text{$X \subseteq [d]$ with distinct $\overline{x}$'s for all $x \in X$,}\\
            \text{distinct $\overline{y}$'s for all $y \in \widecheck{Y}$,}\\
            \text{$\bm{\nu} \subseteq \bm{\mu}$ for some $\bm{\mu} \in \mathcal{M}(\mathbf{d}|m)$}, \\
            \text{and $\overline{\imath} < \overline{x}$ for all $i \in i(\widehat{\bm{\nu}})$ and $x \in \widehat{X}$}
        \end{array}
    \right\}.
    \end{equation}
    We justify the claim as follows.
    If $X \not\subseteq [d]$, then the $m$-antisymmetrization by $\Phi$ (along with Laplace expansion) allows the determinant in~\eqref{det for Theta} to be written as a linear combination of determinants $\det [b(v_{x'}, v_{y'})]_{x' \in X', y' \in Y'}$ such that $X' \subseteq [d]$; therefore we may as well insist that $X \subseteq [d]$.
    If two of the $\overline{x}$'s (or two of the $\overline{y}$'s for $y \in \widecheck{Y}$) are equal, then the $\mathbf{d}$-symmetrization by $\Phi$ kills $\det [b(v_{x}, v_{y})]_{x \in X, y \in Y}$ in the expression~\eqref{det for Theta} for $\Theta(X,Y, \bm{\nu})$; hence $\Phi(\Theta(X,Y,\bm{\nu})$ is identically zero, and yields only the trivial relation $0=0$.
    Likewise, if $\bm{\nu}$ is not a submatching of some $\bm{\mu} \in \mathcal{M}(\mathbf{d}|m)$, then by~\eqref{Phi(theta mu)}, every term in $\Theta(X,Y,\bm{\nu})$ is killed by $\Phi$ (due to the combination of the $\mathbf{d}$-symmetrization and the $m$-antisymmetrization), so we obtain only the trivial relation.
    Finally, recall that the $m$-antisymmetrization by $\Phi$ is a signed average over all permutations of the indices $d+1, \ldots, d+m$, which are precisely the indices paired with the elements of $\widehat{X} \sqcup i(\widehat{\bm{\nu}})$ in the matching $\{ \{x_\ell, y_\ell\}\}_{\ell=1}^{k+1} \sqcup \bm{\nu}$.
    Hence, we may as well restrict our attention to those triples $(X,Y,\bm{\nu})$ such that all the elements $i \in i(\widehat{\bm{\nu}})$ are less than all the elements $x \in \widehat{X}$.
    The $\mathbf{d}$-symmetrization by $\Phi$ then forces $\overline{\imath} < \overline{x}$, as in~\eqref{R''}.
    This establishes the claim above, and we conclude that in $M_{\sigma, \mathbf{d}}$,
    \begin{equation}
        \label{Phi SFT O}
        \begin{array}{l}
        \text{any linear relation among the $\mathbf{f} \phi_T$'s is a linear consequence} \\
        \text{of the relations $\Phi(\Theta) = 0$, for all $\Theta \in \mathcal{R}''$}.
        \end{array}
    \end{equation}

    By comparing~\eqref{R''} with Definition~\ref{def:split O}, we observe that in $\mathcal{R}''$, the equivalence classes modulo $\widecheck{\bm{\nu}}$ are parametrized by the $\O_k$-splits:
    \[
    \Theta(X,Y,\bm{\nu}) \leadsto A = \overline{\imath}(\widehat{\bm{\nu}}), \: B = \{ \overline{x} : x \in \widecheck{X} \}, \: C = \{ \overline{y} : y \in \widecheck{Y}\}, \: D = \{ \overline{x} : x \in \widehat{X}\}.
    \]
    In the context of Definition~\ref{def:split O}, we thus have $r = \# \widecheck{X} = \# \widecheck{Y}$ and $t = \# \widehat{X} = \# \widehat{Y}$ and $v = \# \widehat{\bm{\nu}}$.
    Set
    \[
      \overline{\tau}(\widehat{X}) \coloneqq \{\overline{x_{\tau(\ell)}} : x_\ell \in \widehat{X} \}.
    \]
    By~\eqref{Phi(theta mu)}, we have the following for each $\Theta(X,Y,\bm{\nu}) \in \mathcal{R}''$ with corresponding split $(A,B,C,D)$:  
    \begin{align}
        0 = \Phi\Big( \Theta(X,Y,\bm{\nu}) \Big) &= \Phi \left( \sum_{\tau \in \mathfrak{S}_{k+1}} {\rm sgn}(\tau) \: \theta_{\tau(X,Y,\bm{\nu})} \right) \nonumber \\
        &= \sum_{\tau \in \mathfrak{S}_{k+1}} {\rm sgn}(\tau) \: \Phi \! \left(\theta_{\tau(X,Y,\bm{\nu})} \right) \nonumber \\
        &= \sum_{\tau \in \mathfrak{S}_{k+1}} {\rm sgn}(\tau) \: {\rm sgn}(\widehat{\bm{\nu}}) \Bigg( \prod_{\{i,j\} \in \widecheck{\bm{\nu}}} f_{\overline{\imath}, \overline{\jmath}} \Bigg) \Bigg( \prod_{\substack{
        x \in \widecheck{X}, \\ 
        y \in \widecheck{Y} 
        }} 
        f_{\overline{x}, \overline{y}}\Bigg) \: \phi_{\overline{\imath}(\widehat{\bm{\nu}}) \cup \overline{\tau}(\widehat{X})} \nonumber \\
        &= {\rm sgn}(\widehat{\bm{\nu}}) \prod_{\{i,j\} \in \widecheck{\bm{\nu}}} f_{\overline{\imath}, \overline{\jmath}} \cdot \Bigg[ \sum_{\tau \in \mathfrak{S}_{k+1}}{\rm sgn}(\tau) \Bigg( \prod_{        \substack{
        x \in \widecheck{X}, \\ 
        y \in \widecheck{Y} 
        }
        } 
        f_{\overline{x}, \overline{y}}\Bigg)\: \phi_{\overline{\imath}(\widehat{\bm{\nu}}) \cup \overline{\tau}(\widehat{X})} \Bigg] \nonumber \\      & = 
    {\rm sgn}(\widehat{\bm{\nu}}) \prod_{\{i,j\} \in \widecheck{\bm{\nu}}} f_{\overline{\imath}, \overline{\jmath}} \cdot 
    \left[ 
    \sum_{\pi: B \leftrightarrow D} {\rm sgn}(\pi) \: \pi. \!\left(\det [f_{ij}]_{
    \begin{subarray}{l}
        i \in B\\
        j \in C
    \end{subarray}
    } 
    \cdot \phi_{A \cup D} \right) \right], \label{Phi Theta final}
    \end{align}
   where the sum ranges over all exchanges $\pi$ between $B$ and $D$;
   that is, each $\pi$ acts by exchanging any number $p$ of elements of $B$ with $p$ elements of $D$ (with $0 \leq p \leq \min\{r,t\}$), and ${\rm sgn}(\pi)$ equals $(-1)^p$ times the signatures of the resulting lists $B'$ and $D'$.
   
   The right-hand side of~\eqref{Phi Theta final} contains $r!$ many terms for each exchange $\pi$, corresponding to the terms in the $r \times r$ determinant.
    With respect to the monomial ordering in~\eqref{monomial ordering}, the leading term on the right-hand side of~\eqref{Phi Theta final} occurs within the summand corresponding to $\pi = {\rm id}$, since in a split the elements of $B$ are already less than all the elements of $D$.
    Furthermore, in the summand corresponding to $\pi = {\rm id}$, the leading monomial is the one that contains the main diagonal in the determinant.
    Hence the leading monomial in~\eqref{Phi Theta final} is the monomial of the split $(A,B,C,D)$, times the monomial $\prod_{\{i,j\} \in \widecheck{\bm{\nu}}} f_{\overline{\imath}, \overline{\jmath}}$.
    It follows that if a monomial $\mathbf{f} \phi_T \in M_{\sigma, \mathbf{d}}$ is divisible by the monomial of an $\O_k$-split, then it can be rewritten as a linear combination of strictly lesser monomials with respect to the term order~\eqref{monomial ordering}.
    Thus $M_{\sigma, \mathbf{d}}$ is spanned by the monomials that are not divisible by the monomial of a split, and by~\eqref{Phi SFT O}, these monomials are linearly independent.
    Hence these monomials form a linear basis for $M_{\sigma, \mathbf{d}}$, and the result follows from~\eqref{M sigma d decomp}.
\end{proof}

\begin{proof}[Proof of Theorem~\ref{thm:Stanley decomps and HS} \textup{(}$H = \O_k$\textup{)}]

We adapt the argument used in the proof for $\Sp_{2k}$ and $\GL_k$ above.
We equip $\mathbf{P}$ with a different partial order:
\begin{equation}
    \label{partial order for O}
    (i,j) \leq (i', j') \text{ if and only if } i \geq i' \text{ and } j \leq j'.
\end{equation}
Then for each $T = \{T_1, \ldots, T_m\} \in \mathcal{T}(\sigma) = \binom{[n]}{m}$ where $n+1 \eqcolon T_0 > T_1 > \cdots > T_m$, define the abstract simplicial complex
\begin{equation}
        \label{Delta T O}
        \Delta_T \coloneqq
        \left\{ \mathbf{S} \subseteq \mathbf{P} : \operatorname{width} \left( \mathbf{S}|_{i < T_{t}} \right) \leq k-t, \text{ for all } 0 \leq t \leq m \right\}.
    \end{equation}
By Lemma~\ref{lemma:Ok standard monomials}, we have $M_\sigma = \bigoplus_{T \in \mathcal{T}(\sigma)} ( \bigoplus_{\mathbf{f}} \: \C \, \mathbf{f} ) \phi_T$, 
where the inside direct sum ranges over all monomials $\mathbf{f}$ that are not divisible by the monomial of an $\O_k$-split.
Letting $(*)$ denote this inside direct sum, we claim that $(*) = \C[\Delta_T]$.
To this end, let ${\rm supp}(\mathbf{f}) \coloneqq \{ (i,j) \in \mathbf{P} : \text{$f_{ij}$ divides $\mathbf{f}$} \}$.
Since the monomials of splits are squarefree in the $f_{ij}$'s, if $\mathbf{f} \phi_T$ is divisible by the monomial of a split, then so is $f_{\rm supp(\mathbf{f})} \phi_T$.
In this case, where the split in question is $(A,B,C,D)$, we have $T = A \sqcup D$ in~\eqref{monomial of O split}.
Thus there is some $t$, where $0 \leq t \leq m$, such that $d_1 < b_1 < \cdots < b_r$, where $r + t = k+1$ (and if $t=0$, then omit the $d_1$), and $\{(b_i, c_i)\}_{i=1}^r$ is an antichain contained in ${\rm supp}(\mathbf{f})$.
Equivalently, $\operatorname{width}({\rm supp}(\mathbf{f})|_{i < T_t}) \geq k - t + 1$.
Therefore $\mathbf{f}$ occurs in $(*)$ if and only if $\operatorname{width}({\rm supp}(\mathbf{f})|_{i < T_t}) \leq k - t$, for all $0 \leq t \leq m$.
By~\eqref{Delta T O}, this is equivalent to ${\rm supp}(\mathbf{f}) \in \Delta_T$.
Hence we have $(*) = \C[\Delta_T]$, as claimed, which yields
    \begin{equation}
        \label{M sigma SR O}
        M_\sigma = \bigoplus_{T \in \mathcal{T}(\sigma)} \C[\Delta_T] \phi_T.
    \end{equation}

    For $\mathbf{E} = \{(i_t, n)\}_{t=1}^k \in \E$, where $n+1 \eqcolon i_0 > i_1 > \cdots > i_k$, define the abstract simplicial complex
    \[
    \Delta_{\mathbf{E}} \coloneqq 
    \left\{ \mathbf{S} \subseteq \mathbf{P} : \operatorname{width} \left( \mathbf{S}|_{i < i_t} \right) \leq k-t, \text{ for all } 0 \leq t \leq k \right\}.
    \]
    It remains to establish the three properties~\eqref{Delta T = Delta end T}--\eqref{res = cor}.    
    The property~\eqref{Delta T = Delta end T}, namely that $\Delta_T = \Delta_{{\rm end}(T)}$, follows from the definition of ${\rm end}(T)$ in~\eqref{end T details O} and from the fact that $\operatorname{width}(\mathbf{P}|_{i < a}) = a-1$.
    Property~\eqref{F Delta E = F E} follows from the same argument used in the proof for $\Sp_{2k}$ and $\GL_k$ above;
    recall, however, that for $\O_k$, each $\F \in \mathcal{F}_{\mathbf{E}}$ is determined by $k$ many \emph{north}east lattice paths, which are saturated chains with respect to the partial order~\eqref{partial order for O}.
    To show~\eqref{res = cor}, namely that ${\rm res}(\F) = \cor(\F)$, let $\F = \mathbf{L}_1 \sqcup \cdots \sqcup \mathbf{L}_k \in \mathcal{F}(\Delta_{\mathbf{E}})$, where each $\mathbf{L}_t$ is a northeast lattice path from some diagonal element $(b_t, b_t)$ to $(i_t, n)$.
    Then each $\mathbf{L}_t$ is contained in the lower order ideal $\mathbf{Q}_t \coloneqq \{(i,j) \in \mathbf{P} : (i,j) \leq (i_t, n) \}$.
    Let $\widehat{\mathbf{Q}}_t \coloneqq \mathbf{Q}_t \cup \{\mathbf{0}\}$, where $\mathbf{0}$ is an auxiliary element covered by every diagonal element $(i,i)$.
    Thus $\widehat{\mathbf{Q}}_t$ is a bounded poset, with minimal element $\mathbf{0}$ and maximal element $(i_t, n)$.
    Define the labeling $\alpha_t$ on $\widehat{\mathbf{Q}}_t$ as follows:
    \[
        \label{O alpha}
        \alpha_t\Big( \mathbf{0}, \: (i,i) \Big) = \begin{cases}
            0, & i = i_t,\\
            1, & i > i_t,
        \end{cases}
        \qquad
        \alpha_t\Big( (i,j), \: (i-1, j) \Big) = 0,
        \qquad
        \alpha_t \Big( (i,j), \: (i, j+1) \Big) = 1.
    \]
    Then $\alpha_t$ is an EL-labeling of $\widehat{\mathbf{Q}}_t$, since between any two comparable elements there is a unique saturated chain with label sequence $(0, \ldots, 0)$.
    Thus by Lemma~\ref{lemma:EL labeling induces shelling}, $\alpha_t$ induces a shelling of the order complex $\Delta(\widehat{\mathbf{Q}}_t)$ where the restrictions of the facets are given by the descents in label sequences, and it is clear that the descents correspond to the corners defined in~\eqref{corner O}.
    Each $\mathbf{L}_t$ is a facet of $\Delta(\widehat{\mathbf{Q}}_t)$ with $\mathbf{0}$ removed.
    By concatenating the label sequences of $\mathbf{L}_1, \ldots, \mathbf{L}_k$, and then ordering these concatenations lexicographically (among all $\F$'s), we obtain a shelling of $\Delta_{\mathbf{E}}$ in which the restrictions are the corners, which proves~\eqref{res = cor}.
    The rest of the proof proceeds in the same way as for $\Sp_{2k}$ and $\GL_k$, starting from~\eqref{M sigma SR O} and using the chain of equalities displayed after~\eqref{res = cor} (replacing $z_{ij}$ by $f_{ij}$, and $w_T$ by $\phi_T$).
\end{proof}

\section{Invariant rings for $\SO(V)$ and $\SL(V)$}
\label{sec:Special cases}

In this section, we adapt our jellyfish model to the special linear group $\SL_k \coloneqq \{h \in \GL_k : \det h = 1\}$ and special orthogonal group $\SO_k \coloneqq \{h \in \O_k : \det h = 1\}$, in order to write down Stanley decompositions and Hilbert series of their rings of invariants.
All notation in this section ($M_\sigma$, $f_{ij}$, $\det_T$, $\mathbf{P}$, etc.) in the context of $\SL_k$ (\resp $\SO_k$) is the same as for $\GL_k$ (\resp $\O_k$) above.
We begin with $\SO_k$, which is the much simpler case.

\subsection*{The special orthogonal group}
\label{sub:invariants SO}

Let $V = \C^k$ be the defining representation of $\O_k$, and let $W \coloneqq V^n$ as before.
Recall that for $H = \O_k$, the irreducible representation $U_{(1^k)} \cong \Wedge^k V$ is the one-dimensional representation on which $h \in H$ acts by the scalar $\det h$.
Therefore, we have
\begin{align}
    \C[W]^{\SO_k} & \coloneqq \{ f \in \C[W] : f(hw) = f(w) \text{ for all $h \in \SO_k$ and $w \in W$} \} \nonumber \\
    & = \left\{ f \in \C[W] : \begin{array}{l} f(hw) = (\det h)^s f(w) \text{ for all $h \in \O_k$ and $w \in W$}, \\ \text{where $s=0$ or $1$} \end{array} \right\} \nonumber \\
    &= M_0 \oplus M_{(1^k)}, \label{M0 M1}
\end{align}
where $M_0$ and $M_{(1^k)}$ are modules of covariants~\eqref{M sigma} for $H = \O_k$.
(The module $M_{(1^k)}$ is sometimes called the \emph{module of semiinvariants} for $\O_k$.)
In the proposition below, given some $J \in \binom{[n]}{k}$ containing elements $j_1 < \cdots < j_k$, we write $\det_J$ to denote the function $(v_1, \ldots, v_n) \mapsto \det(v_{j_1}, \ldots, v_{j_k})$.

\begin{prop}
\label{prop:SO invariants}
    Let $V = \C^k$, and set $\widehat{\mathbf{E}} \coloneqq \{(i,n) : 1 \leq i \leq \min\{k,n\} \}$, with all other notation as in the $H = \O_k$ setting of Theorem~\ref{thm:Stanley decomps and HS}.
    We have a Stanley decomposition
     \[
     \C[V^n]^{\SO_k} = 
     \bigoplus_{\mathclap{\F \in \mathcal{F}_{\widehat{\mathbf{E}}}}} \C[f_{ij} : (i,j) \in \F] \: f_{\cor(\F)} \oplus \bigoplus_{J \in \binom{[n]}{k}} \scalebox{.88}{$\displaystyle \left( \bigoplus_{\F \in \mathcal{F}_{\{(j,n): j \in J\}}} \hspace{-2ex} \C[f_{ij} : (i,j) \in \F] \: f_{\cor(\F)} \: {\textstyle \det_{J}} \right)$}.
     \]
     Furthermore, we have the Hilbert--Poincar\'e series
    \[
         P\!\left(\C[V^{n}]^{\SO_k}; t\right) = 
         P_{\widehat{\mathbf{E}}}(t) + t^k \sum_{J \in \binom{[n]}{k}} P_{\{(j,n) : j \in J\}}(t).
    \]
    In particular, if $k > n$, then $\C[V^n]^{\SO_k} = \C[V^n]^{\O_k} = \C[f_{ij} : (i,j) \in \mathbf{P}]$ with Hilbert--Poincar\'e series $1/(1-t^2)^{\binom{n+1}{2}}$.
\end{prop}

\begin{proof}
This follows directly from~\eqref{M0 M1}, upon applying the $H = \O_k$ case of Theorem~\ref{thm:Stanley decomps and HS} and Corollary~\ref{cor:HS} in the two extreme cases $m=0$ and $m=k$.
Recalling from~\eqref{size P delta P} that $\# \delta(\mathbf{P}) = n$, in applying Theorem~\ref{thm:Stanley decomps and HS}, we treat separately the two cases $k < n$ and $k \geq n$.

Suppose $k < n$.
Then $\widehat{\mathbf{E}} = \{ (1,n), \ldots, (k,n) \}$ consists of the $k$ northernmost endpoints in $\delta(\mathbf{P})$.
When $m=0$, we have $\mathcal{T}(0) = \mathcal{T}_{\widehat{\mathbf{E}}}(0) =  \{ \varnothing \}$, by~\eqref{end T details O}.
Thus $\mathcal{J}(0) = \mathcal{F}_{\widehat{\mathbf{E}}} \times \{ \varnothing \}$.
Since $\phi_{\varnothing} = 1$, we apply Theorem~\ref{thm:Stanley decomps and HS} to conclude that $M_0$ equals the first direct sum (ranging over $\F \in \mathcal{F}_{\widehat{\mathbf{E}}}$) in this proposition.
Likewise, when $m=k$, we have $\mathcal{T}(1^k) = \binom{[n]}{k}$, upon identifying a one-column tableau $T \in \SSYT((1^k), n)$ with the set $J = \{T_{1,1}, \ldots, T_{k,1}\}$.
Then we have $\mathcal{T}_{\{(j,n) : j \in J\}}(1^k) = \{J\}$, by~\eqref{end T details O}.
Thus $\mathcal{J}(1^k) = \coprod_{J \in \binom{[n]}{k}} \mathcal{F}_{\{(j,n) : j \in J \}} \times \{ J \}$.
Since $\phi_J = \det_J$ by~\eqref{phi O}, we conclude that $M_1$ equals the second direct sum (ranging over $J$) in this proposition.

Suppose $k \geq n$.
When $m=0$, we have $\mathcal{T}(0) = \{ \varnothing \}$, so that $M_0 = \C[ f_{ij} : (i,j) \in \mathbf{P} ]$ by Theorem~\ref{thm:Stanley decomps and HS}.
But since $\widehat{\mathbf{E}} = \delta(\mathbf{P})$, we have $\mathcal{F}_{\widehat{\mathbf{E}}} = \{ \mathbf{P} \}$, and $\cor(\mathbf{P}) = \varnothing$; thus the first direct sum in this proposition equals $M_0$.
When $m=k$, we have that $M_{(1^k)}$ equals the second direct sum in this proposition, by the same argument as in the $k < n$ case above.

In particular, if $k > n$, then $\mathcal{T}(1^k) = \varnothing$, so that $\mathcal{J}(1^k) = \varnothing$ and we have $M_{(1^k)} = 0$.
Thus in this range, we have $\C[V^n]^{\SO_k} = M_0$, and the given Hilbert series follows from the fact~\eqref{size P delta P} that $\#\mathbf{P} = \binom{n+1}{2}$.
\end{proof}

\subsection*{The special linear group}
\label{sub:invariants SL}

Let $V = \C^k$ be the defining representation of $\GL_k$, and let $W \coloneqq V^{*p} \oplus V^q$ as before.
Recall that for $H = \GL_k$, and for $s \in \mathbb{Z}$, the irreducible representation $U_{(s^k)}$ is the one-dimensional representation on which $h \in H$ acts by the scalar $(\det h)^s$.
Therefore, we have
\begin{align}
    \C[W]^{\SL_k} & \coloneqq \{f \in \C[W] : f(hw) = f(w) \text{ for all $h \in \SL_k$ and $w \in W$} \} \nonumber \\
    &= \left\{f \in \C[W] : \begin{array}{l} f(hw) = (\det h)^s f(w) \text{ for all $h \in \GL_k$ and $w \in W$}, \\ \text{for some $s \in \mathbb{Z}$} \end{array} \right\} \nonumber \\
    &= \bigoplus_{s \in \mathbb{Z}} M_{(s^k)} \nonumber \\
    & = 
    M_0 \oplus \bigoplus_{s > 0} M_{(s^k)} \oplus \bigoplus_{s < 0} M_{(s^k)}, \label{SL invariants = sum GL}
\end{align}
where $M_{(s^k)}$ is a module of covariants~\eqref{M sigma} for $H = \GL_k$.
(These modules, where $s \neq 0$, are sometimes called \emph{modules of semiinvariants} for $\GL_k$.)

In what follows, we use the letters $J$ and $I$ in a consistent way, such that  $J \in \binom{[q]}{k}$ and $I \in \binom{[p]}{k}$.
Note that these sets are empty if $k>q$ (\resp $k>p$).
In our notation, the $J$'s are accompanied by plus signs, and the $I$'s by minus signs; this will make the proof of Proposition~\ref{prop:SL invariants} as transparent as possible.
Recalling the sets $\mathcal{F}_{\mathbf{E}}$ and $\mathcal{T}_{\mathbf{E}}(\sigma)$ from the $\GL_k$ setting of~\eqref{F_E} and~\eqref{T_E}, respectively, we introduce the shorthand
\begin{equation}
    \label{F T shorthand}
    \begin{array}{lll}
     \mathcal{F}^+_J \coloneqq \mathcal{F}_{\{(p,j) : j \in J\}}, & & 
     \mathcal{F}^-_I \coloneqq \mathcal{F}_{\{(i,q) : i \in I\}},\\[2ex]
     \mathcal{T}^+_J(\sigma) \coloneqq \mathcal{T}_{\{(p,j) : j \in J\}}(\sigma), & & \mathcal{T}^-_I(\sigma) \coloneqq \mathcal{T}_{\{(i,q) : i \in I\}}(\sigma).
    \end{array}
\end{equation}

We will write $\mathbf{C}$ to denote a maximal chain in $\binom{[q]}{k}$ or $\binom{[p]}{k}$, with respect to the tableau order; equivalently, $\mathbf{C}$ denotes a facet of the order complex of either poset.
We have the covering relation $J \lessdot J'$ if and only if $J'$ is obtained by adding 1 to exactly one element of $J$.
Thus we can define the following labeling $\alpha$, for all $J \lessdot J'$:
\begin{equation}
    \label{alpha SL}
    \alpha(J, J') = \ell \Longleftrightarrow \text{$J'$ is obtained by adding 1 to the $\ell$th smallest element of $J$}.
\end{equation}
We abbreviate this by $
J \overset{\ell}{\lessdot} J'$.
By Definition~\ref{def:EL-labeling}, this $\alpha$ is an EL-labeling of both $\binom{[q]}{k}$ and $\binom{[p]}{k}$, since the lexicographically minimal path between two elements $J < J'$ is obtained by repeatedly incrementing the elements of $J$ as necessary, from smallest to largest.
Thus by Lemma~\ref{lemma:EL labeling induces shelling}, $\alpha$ induces a shelling on the order complex of $\binom{[q]}{k}$ or $\binom{[p]}{k}$.
By~\eqref{R equals descents}, the restrictions are given by descents:
\begin{equation}
    \label{res(C)}
    {\rm res}(\mathbf{C}) = \Big\{ J \in \mathbf{C} : \text{$\mathbf{C}$ contains $A \overset{\ell}{\lessdot} J \overset{m}{\lessdot} B$ such that $\ell > m$} \Big\}.
\end{equation}

Let $J \in \binom{[q]}{k}$, and consider the upper order ideal $\binom{[q]}{k}_{\geq J}$ consisting of those elements $J'$ such that $J' \geq J$.
Define $\binom{[p]}{k}_{\geq I}$ in the same way.
Let
\begin{align}
    \label{CI CJ}
    \begin{split}
    \mathcal{C}^+_J &\coloneqq \Big\{ \mathbf{C} :  \text{$\mathbf{C}$ is a maximal chain in $\textstyle\binom{[q]}{k}_{\geq J}$} \Big\},\\
    \mathcal{C}^-_I &\coloneqq \Big\{ \mathbf{C} :  \text{$\mathbf{C}$ is a maximal chain in $\textstyle\binom{[p]}{k}_{\geq I}$} \Big\}.
    \end{split}
\end{align}
For $\mathbf{C} \in \mathcal{C}^+_J$ or $\mathcal{C}^-_I$, respectively, define
\begin{equation}
    \label{det C}
     {\textstyle \det_{\mathbf{C}}} \coloneqq {\textstyle \det_J} \prod_{J' \in {\rm res}(\mathbf{C})} {\textstyle \det_{J'}}, \qquad {\textstyle \det^*_{\mathbf{C}}} \coloneqq {\textstyle \det^*_I} \prod_{I' \in {\rm res}(\mathbf{C})} {\textstyle \det^*_{I'}}.
\end{equation}
Analogously to the set of jellyfish from Definition~\ref{def:jellyfish}, we define
\begin{equation}
\label{KJ KI}
\mathcal{K}^+ \coloneqq \coprod_{\mathclap{J \in \binom{[q]}{k}}} \mathcal{F}^+_J \times \mathcal{C}^+_J, \qquad \mathcal{K}^- \coloneqq \coprod_{\mathclap{I \in \binom{[p]}{k}}} \mathcal{F}^-_I \times \mathcal{C}^-_I.
\end{equation}
A straightforward calculation shows that every element of $\mathcal{C}^+_{J}$ (respectively, $\mathcal{C}^-_I$) has the same cardinality $c^+_J$ (\resp $c^-_I$), namely
\begin{equation}
    \label{cI cJ}
     c^+_J = \frac{k}{2}(2q - k -1) - \sum_{j \in J} j \qquad \text{and} \qquad c^-_I = \frac{k}{2}(2p - k -1) - \sum_{i \in I} i.
\end{equation}
Finally, recalling the rational function $P_\mathbf{E}(t)$ from Corollary~\ref{cor:HS}, we define the shorthand
\begin{equation}
    \label{PI PJ}
    P^+_J(t) \coloneqq P_{\{(p,j):j \in J\}}(t), \qquad P^-_I(t) \coloneqq P_{\{(i,q):i\in I\}}(t),
\end{equation}
along with the new rational functions
\begin{equation}
    \label{QI QJ}
    Q^+_J(t) \coloneqq \frac{\sum_{\mathbf{C} \in \mathcal{C}^+_J} (t^k)^{\# {\rm res}(\mathbf{C})+1}}{(1-t^k)^{c^+_J}}, \qquad Q^-_I(t) \coloneqq \frac{\sum_{\mathbf{C} \in \mathcal{C}^-_I} (t^k)^{\# {\rm res}(\mathbf{C}) +1}}{(1-t^k)^{c^-_I}}.
\end{equation}

\begin{prop}
\label{prop:SL invariants}
Let $V = \C^k$.
If $k < \min\{p,q\}$, then we have a Stanley decomposition
\begin{align}
    \label{SD SL}
    \begin{split}
    \C[V^{*p} \oplus V^q]^{\SL_k} = \phantom{\oplus} &\hspace{2ex} \bigoplus_{\mathclap{\F \in \mathcal{F}^-_{\{p-k+1, \ldots, p\}}}} \hspace{2.5ex} \C[f_{ij} : (i,j) \in \F] \: f_{\cor(\F)}\\
    \oplus &\bigoplus_{(\F, \mathbf{C}) \in \mathcal{K}^+} \C\Big[\{f_{ij} : (i,j) \in \F\} \cup \{{\textstyle \det_J} : J \in \mathbf{C} \} \Big] \: f_{\cor(\F)} \: {\textstyle \det_{\mathbf{C}}}\\
    \oplus &\bigoplus_{(\F, \mathbf{C}) \in \mathcal{K}^-} \C\Big[\{f_{ij} : (i,j) \in \F\} \cup \{{\textstyle \det^*_I} : I \in \mathbf{C} \} \Big] \: f_{\cor(\F)} \: {\textstyle \det^*_{\mathbf{C}}},
    \end{split}
    \end{align}
where $f_{ij}$, $\det^*_I$, and $\det_J$ are defined for $H = \GL_k$ in~\eqref{table P fij} and~\eqref{table dets GL Sp}, and $\mathcal{K}^\pm$ in~\eqref{KJ KI}, and $\det^*_{\mathbf{C}}$ and $\det^*_{\mathbf{C}}$ in~\eqref{det C}. Furthermore, we have the Hilbert--Poincar\'e series
\begin{equation}
    \label{Hilbert series SL invariants}
    P \! \left(\C[V^{*p} \oplus V^q]^{\SL_k}; t \right) = P^+_{\{q-k+1, \ldots, q\}}(t) + \sum_{\mathclap{\substack{J \in \binom{[q]}{k}}}} P^+_{J}(t) Q^+_{J}(t) + \sum_{\mathclap{\substack{I \in \binom{[p]}{k}}}} P^-_I(t) Q^-_I(t).
\end{equation}
If $k \geq \min\{p,q\}$, then replace the first direct sum in~\eqref{SD SL} by $\C[ f_{ij} : (i,j) \in \mathbf{P} ]$, and replace the first term in~\eqref{Hilbert series SL invariants} by $\frac{1}{(1-t^2)^{pq}}$.

\noindent In particular, if $k > q$ (\resp $k > p$), then the second (\resp third) direct sum in~\eqref{SD SL} is empty, and thus the second (\resp third) term in~\eqref{Hilbert series SL invariants} is $0$.
\end{prop}

\begin{proof}
Just as in Proposition~\ref{prop:SO invariants}, this result will follow from~\eqref{SL invariants = sum GL} and Theorem~\ref{thm:Stanley decomps and HS};
here, however, we must first use Stanley--Reisner theory in order to collapse the infinite direct sums in~\eqref{SL invariants = sum GL} into finite direct sums.

Note that for $T \in \mathcal{T}(s^k)$, we have $T = (T^+, \varnothing)$ if $s \geq 0$, and $T = (\varnothing, T^-)$ if $s < 0$.
We may identify $\phi_T$ with the function $\det_T$ defined in~\eqref{table dets GL Sp}.
It follows that for all $T \in \mathcal{T}(s^k)$, we have 
\begin{equation}
    \label{phi in SL proof}
    \phi_T = {\textstyle \det_T} = \begin{cases}
        1, & s = 0,\\
        \prod_{\text{columns $J$ of $T^+$}} \textstyle \det_J , & s > 0, \\

        \prod_{\text{columns $I$ of $T^-$}} \textstyle \det^*_I, & s < 0.
    \end{cases}
\end{equation}
We now apply Theorem~\ref{thm:Stanley decomps and HS} to the three components in~\eqref{SL invariants = sum GL}, corresponding to $s=0$, $s>0$, and $s<0$.
We suppose $k < \min\{p,q\}$.

For $s=0$, note that $i_t = \hat{\imath}_t = p-t+1$ in~\eqref{end T GL details}, so that $\mathcal{T}(0) = \mathcal{T}_{\widehat{\mathbf{E}}}(0) = \{ \varnothing \}$, where $\widehat{\mathbf{E}} \coloneqq \{(p-t+1, q) : 1 \leq t \leq k\}$.
Thus $\mathcal{J}(0) = \mathcal{F}_{\widehat{\mathbf{E}}} \times \{ \varnothing \}$.
In the notation of~\eqref{F T shorthand}, we have $\mathcal{F}_{\widehat{\mathbf{E}}} = \mathcal{F}_{\{p-t+1 : 1 \leq t \leq k\}}$, and so we conclude that $M_0$ equals the first direct sum in~\eqref{SD SL}.

It remains to show that the $s>0$ and $s<0$ components in~\eqref{SL invariants = sum GL} equal the second and third direct sums in~\eqref{SD SL}.
We will prove only the $s>0$ case, since the $s<0$ case is identical.
Let
\[
\Delta_{\geq J} \coloneqq \text{the order complex of $\binom{[q]}{k}_{\geq J}$}
\]
with Stanley--Reisner ring $\C[\Delta_{\geq J}]$ in the indeterminates $z_{J'}$, for all $J' \geq J$.
Note that $\mathcal{C}^+_J$, defined in~\eqref{CI CJ}, is precisely the facet set $\mathcal{F}(\Delta_{\geq J})$.
By~\eqref{cI cJ}, $\Delta_{\geq J}$ is pure.
On one hand, a monomial~$\mathbf{z}$ lies in $\C[\Delta_{\geq J}]$ if and only if one can write $\mathbf{z} = z_{J_1} \cdots z_{J_r}$ such that the columns $J_1 \leq \cdots \leq J_r$ form a tableau in $\SSYT((r^k), q)$, with $J \leq J_1$.
On the other hand, the shelling induced by the labeling $\alpha$ in~\eqref{alpha SL} restricts to a shelling of $\Delta_{\geq J}$.
This gives us two ways to decompose $\C[\Delta_{\geq J}]$:
\begin{equation}
    \label{T sum = C sum}
    \C[\Delta_{\geq J}] = \bigoplus_{r \geq 0} \Bigg(\bigoplus_{\substack{T \in \SSYT((r^k), q): \\ T_{\bullet,1} \geq J}} \C \cdot \prod_{\mathclap{\substack{\text{cols.~$J'$} \\ \text{of $T$}}}} z_{J'} \Bigg) = \bigoplus_{\mathbf{C} \in \mathcal{C}^+_J} \C[ z_{J'} : J' \in \mathbf{C} ] \: z_{{\rm res}(\mathbf{C})},
\end{equation}
where the Stanley decomposition on the right-hand side follows from~\eqref{Stanley decomp SR ring}.

Let $s>0$.
It follows from the ``end'' map in~\eqref{end T GL details} that $\mathcal{T}_{\mathbf{E}}(s^k)$ is nonempty if and only if $\mathcal{T}_{\mathbf{E}}(s^k) = \mathcal{T}^+_{J}(s^k)$ for some $J \in \binom{[q]}{k}$.
Therefore by Definition~\ref{def:jellyfish}, we have
\begin{equation}
    \label{Jmk}
    \mathcal{J}(s^k) = \coprod_{J \in \binom{[q]}{k}} \mathcal{F}^+_J \times \mathcal{T}^+_J(s^k).
\end{equation}
Explicitly, we have 
\begin{equation}
    \label{T plus J}
    \mathcal{T}^+_J(s^k) = \Big\{ (T, \varnothing) : T \in \SSYT((s^k), q) \text{ with $T_{\bullet,1} = J$} \Big\}.
\end{equation}
Thus the $s>0$ component in~\eqref{SL invariants = sum GL} can be decomposed using Theorem~\ref{thm:Stanley decomps and HS} followed by~\eqref{Jmk}:
\begin{align}
    \bigoplus_{s>0} M_{(m^k)} & = \bigoplus_{s>0} \left(\bigoplus_{(\F, T) \in \mathcal{J}(s^k)} \C[f_{ij} : (i,j) \in \F] \: f_{\cor(\F)} \: \phi_T \right) \nonumber \\
    & = \bigoplus_{s>0} \left(\bigoplus_{J \in \binom{[q]}{k}} \left(
    \bigoplus_{\F \in \mathcal{F}^+_J} \C[f_{ij} : (i,j) \in \F] \:f_{\cor(\F)}  \left( \bigoplus_{T \in \mathcal{T}^+_J(s^k)} \C \phi_T \right)\right)\right) \nonumber \\
    &= \bigoplus_{J \in \binom{[q]}{k}} \Bigg( \bigoplus_{\F \in \mathcal{F}^+_J} \C[f_{ij} : (i,j) \in \F] \:f_{\cor(\F)} \underbrace{\left( \bigoplus_{s>0} \left( \bigoplus_{T \in \mathcal{T}^+_J(s^k)} \C \phi_T  \right) \right)}_{(*)}  \Bigg), \label{end of M decomp}
\end{align}
where
\begin{align*}
    (*) = \bigoplus_{s>0} \Bigg( \bigoplus_{T \in \mathcal{T}^+_J(s^k)} \C \phi_T  \Bigg) &= \bigoplus_{s > 0} \Bigg( \bigoplus_{T \in \mathcal{T}^+_J(s^k)} \C \cdot \prod_{\mathclap{\substack{\text{cols.~$J'$}\\ \text{of $T$}}}} {\textstyle \det_{J'}} \Bigg) & \text{by~\eqref{phi in SL proof}} \\
    &= \bigoplus_{s>0} \Bigg( \bigoplus_{\substack{T \in \SSYT((s^k), q): \\ T_{\bullet,1} = J}} \C \cdot \prod_{\mathclap{\substack{\text{cols.~$J'$}\\ \text{of $T$}}}} {\textstyle \det_{J'}}   \Bigg) & \text{by~\eqref{T plus J}} \\
    &= \bigoplus_{r \geq 0} \Bigg( \bigoplus_{\substack{T \in \SSYT((r^k), q): \\ T_{\bullet,1} \geq J}} \C \cdot \prod_{\mathclap{\substack{\text{cols.~$J'$}\\ \text{of $T$}}}} {\textstyle \det_{J'}}   \Bigg) {\textstyle \det_J} & \text{($r = s-1$)} \\
    &= \Bigg(\bigoplus_{\mathbf{C} \in \mathcal{C}^+_J} \C[ {\textstyle \det_{J'}} : J' \in \mathbf{C}] \: \prod_{\mathclap{J' \in {\rm res}(\mathbf{C})}} {\textstyle \det_{J'}} \Bigg) {\textstyle \det_{J}} & \text{by~\eqref{T sum = C sum}} \\
    &= \bigoplus_{\mathbf{C} \in \mathcal{C}^+_J} \C[{\textstyle \det_{J'}} : J' \in \mathbf{C}] \: {\textstyle \det_{\mathbf{C}}} & \text{by~\eqref{res(C)}}.
\end{align*}
Substituting this direct sum for $(*)$ in~\eqref{end of M decomp}, we have
\begin{align*}
    \bigoplus_{s>0} M_{(s^k)} & = \bigoplus_{J \in \binom{[q]}{k}} \left( \bigoplus_{\F \in \mathcal{F}^+_J} \C[ f_{ij} : (i,j) \in \F] \: f_{\cor(\F)} \left( \bigoplus_{\mathbf{C} \in \mathcal{C}^+_J} \C[{\textstyle \det_{J'}} : J' \in \mathbf{C}] \: {\textstyle \det_{\mathbf{C}}}  \right) \right) \\
    &= \bigoplus_{(\F, \mathbf{C}) \in \mathcal{K}^+} \C[\{f_{ij} : (i,j) \in \F\} \cup \{ {\textstyle \det_J} : J \in \mathbf{C} \} ] \: f_{\cor(\F)} \: {\textstyle \det_{\mathbf{C}}},
\end{align*}
where the last equality follows from the definition of $\mathcal{K^+}$ in~\eqref{KJ KI}.
Thus the $s>0$ component in~\eqref{SL invariants = sum GL} equals the second direct sum in~\eqref{SD SL}.
The proof for the $s<0$ component is identical.
The Hilbert--Poincar\'e series follows immediately from the Stanley decomposition~\eqref{SD SL}, using~\eqref{Hilbert series from Stanley decomp general}, along with the definitions~\eqref{PI PJ} and~\eqref{QI QJ}, and the fact that the det functions in this setting all have degree $k$.

Finally, suppose that $k \geq \min\{p,q\}$.
Consider the $s=0$ case, and recall that $\mathcal{T}(0) = \{ \varnothing \}$.
Theorem~\ref{thm:Stanley decomps and HS} splits into the two cases $k < \#\delta(\mathbf{P})$ and $k \geq \#\delta(\mathbf{P})$, where we recall from~\eqref{size P delta P} that $\#\delta(\mathbf{P}) = p + q - 1$.
If $k < p + q - 1$, then it is easy to check using~\eqref{end T GL details} that for the unique $\widehat{\mathbf{E}} \in \E$ such that $\mathcal{T}(0) = \mathcal{T}_{\widehat{\mathbf{E}}}(0)$, we have $\mathcal{F}_{\widehat{\mathbf{E}}} = \{ \mathbf{P} \}$.
Thus $\mathcal{J}(0) = \{(\mathbf{P}, \varnothing)\}$, and $\cor(\mathbf{P}) = \varnothing$, so that $M_0 = \C[ f_{ij} : (i,j) \in \mathbf{P}]$.
Likewise, if $k > p+q-1$, we obtain the same result directly by Theorem~\ref{thm:Howe duality}.
The Hilbert series $P(M_0;t) = 1/(1-t^2)^{pq}$ follows from the fact~\eqref{size P delta P} that $\#\mathbf{P} = pq$.
\end{proof}

\begin{exam}
    \label{ex:SL_jellyfish}
    Let $k=3$, $p=8$, and $q=10$.
    Below is an example of a pair $(\F, \mathbf{C}) \in \mathcal{K}^-$ indexing one of the Stanley spaces in Proposition~\ref{prop:SL invariants}:
    \tikzstyle{corner}=[rectangle,draw=black,thin, minimum size = 5pt, inner sep=0pt]
\tikzstyle{endpt}=[circle,fill=lightgray, minimum size = 5pt, inner sep=0pt]
\tikzstyle{dot}=[circle,fill=black, minimum size = 3pt, inner sep=0pt]

\begin{center}
\begin{tikzpicture}[scale=.3]

\draw [white,fill=lightgray] (.5,8.5) -- ++ (2,0) -- ++(0,-1) -- ++ (-1,0) -- ++(0,-1) -- ++ (-1,0) -- cycle;

\draw [densely dotted]
(.5,8.5) rectangle (10.5,.5);

\draw[line width=2pt, lightgray] (3,8) -- ++(6,0) node[corner]{} -- ++(0,-1) -- ++ (1,0) \foreach \x/\y in {1/0,1/0,1/0,1/0,1/-1,1/-1,1/0,1/0,1/-1,1/0,1/-1}{
-- ++(\x,\y) node[endpt]{}
}
(2,7) -- ++(4,0) node[corner]{} -- ++(0,-1) -- ++(2,0) node[corner]{} -- ++(0,-1) -- ++ (2,0) \foreach \x/\y in {1/0,1/0,1/-1,1/0,1/0,1/0,1/-1,1/0,1/0,1/-1,1/0}{
-- ++(\x,\y) node[endpt]{}
}
(1,6) -- ++(3,0) node[corner]{} -- ++(0,-1) -- ++(2,0) node[corner]{} -- ++(0,-1) -- ++(4,0) \foreach \x/\y in {1/0,1/-1,1/0,1/-1,1/0,1/0,1/0,1/-1,1/0,1/0,1/0}{
-- ++(\x,\y) node[endpt]{}
};

\foreach \x in {1,2,...,10}{         \foreach \y in {1,2,...,8}{        \node [dot] at (\x,\y) {};}}

\node (A) at (12,7) {};
\node (B) at (12,3) {};
\node (C) at (14,2) {};
\node (D) at (14,7) {};
\node (E) at (18,1) {};
\node (F) at (18,5) {};
\node (G) at (20,1) {};
\node (H) at (20,4) {};
\node[draw, thick, fit=(A.center) (B.center)] {};
\node[draw, thick, fit=(C.center) (D.center)] {};
\node[draw, thick, fit=(E.center) (F.center)] {};
\node[draw, thick, fit=(G.center) (H.center)] {};

\end{tikzpicture}
\end{center}

    \noindent 
    In particular, $(\F, \mathbf{C})$ belongs to $\mathcal{F}^-_{I} \times \mathcal{C}^-_I$, where $I = \{2,4,5\}$.
    Hence $\mathbf{C}$ is a maximal chain $\{2,4,5\} \lessdot \cdots \lessdot \{6,7,8\}$ in $\binom{[p]}{k}_{\geq I}$.
    Here we depict $\mathbf{C}$ in the same way as we depicted tableaux $T \in \mathcal{T}(\sigma)$ in Section~\ref{sec:Stanley decomps main}, so as to extend the lattice paths of $\F$ into ``tentacles.''
    Thus each $I' \in \mathbf{C}$ is depicted as a column of dots.
    The long vertical rectangles indicate the elements of ${\rm res}(\mathbf{C})$, defined in~\eqref{res(C)}:
    at each such element, the left-hand descending tentacle lies below the right-hand descending tentacle.
    In particular, with respect to the labeling $\alpha$ in~\eqref{alpha SL}, $\mathbf{C}$ has label sequence $(3,2,3,1,1,2,1,2,1)$, whose four descents correspond to the four elements of ${\rm res}(\mathbf{C})$.
\end{exam}

\begin{exam}
    Let $k=3$, $p=3$, and $q=4$.
    We abbreviate $P^\pm_{\{a,b,c\}}(t)$ by $P^\pm_{abc}(t)$.
    Implementing~\eqref{Hilbert series SL invariants} leads to the following Hilbert series of $\C[W]^{\SL_k}$:
    \begin{align*}
        & \phantom{+} P^-_{123}(t) \\
        & + P^-_{123}(t) \: Q^-_{123}(t) \\
        & + P^+_{123}(t) \: Q^+_{123}(t) + P^+_{124}(t) \: Q^+_{124}(t) + P^+_{134}(t) \: Q^+_{134}(t) + P^+_{234}(t) \: Q^+_{234}(t) \\[2ex]
        = & \phantom{+} 
        \frac{1}{(1-t^2)^{12}} \\
        & + \frac{1}{(1-t^2)^{12}} \cdot \frac{t^3}{(1-t^3)}\\
        & + \scalebox{.8}{$\displaystyle \frac{1}{(1-t^2)^{9}} \cdot \frac{t^3}{(1-t^3)^4} + \frac{1 + 2t^2}{(1-t^2)^{10}} \cdot \frac{t^3}{(1-t^3)^{3}} + \frac{1+t^2+t^4}{(1-t^2)^{11}} \cdot \frac{t^3}{(1-t^3)^2} + \frac{1}{(1-t^2)^{12}} \cdot \frac{t^3}{(1-t^3)}$} \\[2ex]
        = & \phantom{+} \frac{1 + 3 t^2 + 2 t^3 + 6 t^4 + 3 t^5 + 8 t^6 + 3 t^7 + 6 t^8 + 2 t^9 +  3 t^{10} + t^{12}}{(1 - t^2)^9 (1 - t^3)^3 (1 - t^6)}.
    \end{align*}
    (We also verified this using Macaulay2.)
    This reduced form is somewhat surprising, since the denominator contains the factor $(1-t^6)$ in addition to the expected factors $(1-t^2)$ and $(1-t^3)$.
    Furthermore, the numerator is palindromic but not unimodal.
    
\end{exam}

\section{Weight bases via arc diagrams}
\label{sec:linear bases}

Recall that via Howe duality (Theorem~\ref{thm:Howe duality}), the modules of covariants $M_\sigma$ for a classical group $H$ can be viewed as $(\g,K)$-modules $L_{\la(\sigma)}$ of unitary highest weight representations of a real reductive group $G_{\mathbb{R}}$.
In this section, we manipulate our jellyfish diagrams in order to realize a weight basis for these modules $L_{\la(\sigma)}$.
Recall from~\eqref{M tilde} that when viewing $M_\sigma$ as a $(\g,K)$-module, we write $\widetilde{M}_\sigma$ to emphasize that the $\k$-action is given by tensoring the natural $\k$-action with a one-dimensional $\k$-module $F_{-kc\zeta}$. 
By Theorem~\ref{thm:Stanley decomps and HS}, $\widetilde{M}_\sigma$ has an infinite linear basis consisting of \emph{standard monomials}, where a standard monomial $\xi$ is the product of a monomial in the $f_{ij}$'s with some $\phi_T$.
In particular, for each  standard monomial $\xi$ in $\widetilde{M}_\sigma$, there is a unique jellyfish $(\F, T) \in \mathcal{J}(\sigma)$ whose corresponding Stanley space contains $\xi$.
Thus one can produce standard monomials from the diagram of a jellyfish $(\F, T)$ by arbitrarily ``populating'' the points $(i,j) \in \F$ with their corresponding $f_{ij}$'s.
It turns out that these standard monomials are weight vectors with respect to the $\g$-action, and by ``flattening'' this picture into an arc diagram, we can read off the weight of a standard monomial directly from its degree sequence.

\begin{defi}[Arc diagram of a standard monomial in $\widetilde{M}_\sigma$]
\label{def:arc diagram}
    Assume the hypotheses of Theorem~\ref{thm:Stanley decomps and HS}.
    Let $(\F, T) \in \mathcal{J}(\sigma)$, and let 
    \[
    \mathbf{f} = \prod_{(i,j) \in \F} f_{ij}^{d_{ij}}
    \]
    for nonnegative integers $d_{ij}$.
    The \emph{arc diagram} of the standard monomial
    \[
    \xi = \mathbf{f} \cdot f_{\cor(\F)} \cdot \phi_{T}.
    \]
    is constructed as follows (see Examples~\ref{ex:arc diagram GL} and~\ref{ex:arc diagram Sp}):
    \begin{enumerate}
        \item Starting with the usual diagram of the jellyfish $(\F, T)$, place a box labeled $d_{ij}$ at each point $(i, j) \in \F$ such that $d_{ij} \neq 0$.
        (Make these boxes small enough so that the corners in $\cor(\F)$ are still visible outside them.)

        \item ``Flatten'' the resulting picture into an arc diagram as follows:

        \begin{itemize}
            \item If $H = \GL_k$, then draw vertices $1^*, \ldots, p^*, 1, \ldots, q$ in a line from left to right.
            The ``tentacles'' attached outside eastern edge of $\mathbf{P}$ remain attached to their corresponding (starred) vertices from below;
            likewise, the tentacles attached outside the southern edge of $\mathbf{P}$ remain attached to their corresponding (unstarred) vertices from below.

            On either side (starred and unstarred) of the arc diagram, each horizontal cross section of the tentacles is viewed as a hyperedge connecting the vertices lying directly above its dots.
            
            For all $i$ and $j$, draw $d_{ij}$ many arcs from vertex $i^*$ to vertex $j$.
            Then for each $(i,j) \in \cor(\F)$, draw an additional arc from vertex $i^*$ to vertex $j$.
            
            \item If $H = \O_k$ or $\Sp_{2k}$, then draw vertices $1, \ldots, n$ in a line from left to right.
            The ``tentacles'' attached outside the eastern edge of $\mathbf{P}$ remain attached to their corresponding vertices from below.

            Each horizontal cross section of the tentacles is viewed as a hyperedge connecting the vertices directly above its dots.

            For all $i$ and $j$, draw $d_{ij}$ many arcs from vertex $i$ to vertex $j$.
            Then for each $(i,j) \in \cor(\F)$, draw an additional arc from vertex $i$ to vertex $j$.           
       \end{itemize}
    \end{enumerate}
    As usual, the \emph{degree sequence} of an arc diagram is the sequence 
\[
    \begin{cases} 
    (\deg 1^*, \ldots, \deg p^* \mid \deg 1, \ldots, \deg q), \quad & H = \GL_k, \\
    (\deg 1, \ldots, \deg n), \quad & H = \O_{k} \text{ or } \Sp_{2k},
    \end{cases}
\]
where $\deg i$ is the number of edges (including hyperedges) that are incident to vertex~$i$.
\end{defi}

\begin{prop}
\label{prop:weight from degree sequence}
    Let $(H, \g)$ be one of the three pairs in Theorem~\ref{thm:Howe duality}.
    Let $\xi \in \widetilde{M}_{\sigma}$ be a standard monomial as in Definition~\ref{def:arc diagram}.
    Then $\xi$ is a weight vector under the action of $\g$, whose weight ${\rm wt}_\g(\xi)$ is obtained from the degree sequence of its arc diagram as follows:
    \[
    {\rm wt}_\g(\xi) = \begin{cases}
    (- \deg 1^*, \ldots, - \deg p^* \mid \deg 1, \ldots, \deg q) - (k^p \mid 0^q), & (H, \g) = (\GL_k, \gl_{p+q}),\\
    (- \deg 1, \ldots, - \deg n) - \left((\tfrac{k}{2})^n\right), & (H,\g) = (\O_k, \sp_{2n}), \\
    (- \deg 1, \ldots, - \deg n) - (k^n), &  (H,\g) = (\Sp_{2k}, \so_{2n}).
   \end{cases}
   \]
\end{prop}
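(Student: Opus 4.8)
The plan is to reduce the statement to the explicit description of the Howe duality homomorphism $\omega\colon\g\to\mathcal{D}(W)^H$ recorded in the appendix, and then to match a coordinate-multidegree bookkeeping with the incidence structure of the arc diagram. The first point to record is that since $\h$ is a Cartan subalgebra of both $\g$ and $\k$, the $\g$-weight of any vector equals its $\k$-weight, and since $\mathcal{D}(W)^H$ (hence $\omega(\g)$) acts on $(\C[W]\otimes U_\tau)^H$ only through the $\C[W]$-tensor factor, the weight is governed entirely by how $\omega(\h)$ acts on $\C[W]$. From the appendix's formulas for $\omega$, the operators $\omega(h)$ for $h\in\h$ act on $\C[W]$ as a multidegree operator in the coordinate variables on $W$ plus a scalar; hence for a monomial $\mathbf p\in\C[W]$ of a fixed multidegree the resulting $\h$-weight is minus the appropriate projection of that multidegree onto the weight lattice, minus a constant shift, and this shift is precisely $(k^p\mid 0^q)$, $(k^n)$, or $\bigl((\tfrac k2)^n\bigr)$ in the three cases (it is the standard normal-ordering/metaplectic constant visible directly in the formula for $\omega$; as a sanity check it is also the weight of the constant function $1\in\C[W]^H$, which matches the entry of Table~\ref{table:Howe duality} for $\tau=0$). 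I would isolate this as a lemma, verified case by case.

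Next I would identify the underlying $\C[W]$-functions of $\psi$ and compute their multidegree. Writing $\psi=\mathbf m\cdot f_{\cor(\F)}\cdot\phi_T$, the first two factors form a monomial in the contractions $f_{ij}$, and $\phi_T$ — equivalently $\phi_{(T,\varnothing)}$ post-composed with the $H$-equivariant (hence $\h$-equivariant, since $\omega(\h)$ acts only on $\C[W]$) projection onto $U_\tau$ — is a product of wedges $v_{a_1}\wedge\cdots\wedge v_{a_\ell}$ (and $v^*_{a_1}\wedge\cdots$ on the starred side for $\GL_k$) over the columns of $T$ (of $T^\pm$). Expanding each $f_{ij}$ and each such wedge in the coordinates on $W$ from the appendix, one checks that every monomial term of $\mathbf m\,f_{\cor(\F)}\,\phi_T$ has the \emph{same} multidegree: each factor $f_{ij}$ contributes $1$ to the degree in the $i$-block and $1$ to the degree in the $j$-block of coordinates, and each column of $T$ (of $T^\pm$) with entries $a_1<\cdots<a_\ell$ contributes $1$ to the degree in each of the blocks $a_1,\dots,a_\ell$ (and on the correct, starred or unstarred, side for $\GL_k$). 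In particular $\psi$ lies in a single $\h$-weight space, and by the lemma its weight is minus the projected multidegree minus the shift.

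Finally I would identify this multidegree with the degree sequence of the arc diagram. By Definitions~\ref{def:GL arc diagram} and~\ref{def:O Sp arc diagram}, after reflecting the right edge of $\P$ and its tentacles across the $135$-degree axis, each factor $f_{ij}$ of $\mathbf m\,f_{\cor(\F)}$ becomes an arc joining vertices $i$ and $j$ (exactly one arc for each element of $\cor(\F)$, and $d_{ij}$ arcs coming from $\mathbf m$), while each column of $T$ (of $T^\pm$) with entries $a_1<\cdots<a_\ell$ becomes a hyperedge incident to vertices $a_1,\dots,a_\ell$. Hence $\deg b$ counts exactly the number of block-$b$ coordinate factors appearing in a monomial term of $\psi$, i.e.\ the degree of $\psi$ in the $b$-block of coordinates; summing the arc and hyperedge contributions reproduces the multidegree computed in the previous step. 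Substituting into the lemma yields the three claimed formulas.

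The main obstacle is more bookkeeping than conceptual: the case-by-case verification that the coordinate multidegrees of the $f_{ij}$'s and of the wedges coincide with the incidence counts in the reflected arc diagram, for each of the three pairs $(\GL_k,\gl_{p+q})$, $(\Sp_{2k},\so_{2n})$, $(\O_k,\sp_{2n})$, together with pinning down signs and the shift constant from the appendix's $\omega$. The orthogonal case is in fact the simplest here, since $\tau=(1^m)$ means no orthogonal trace relations intervene and the projection onto $U_\tau=\Wedge^m V$ is trivial, so $\phi_T$ is literally a single wedge.
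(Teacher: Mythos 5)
Your proposal is correct and follows essentially the same route as the paper: reduce to the appendix's formula ${\rm wt}_\g(\mathbf{m}) = -(\text{multidegree}) - (\text{constant shift})$ and then observe that arcs and hyperedges record exactly the coordinate multidegrees contributed by the $f_{ij}$'s and by the columns of $T$. The only divergence is in how the weight of $\phi_T$ is obtained --- the paper transports it through the isomorphism $\Psi$ to $\det_T[x_{ij}]$, whose weight was computed earlier, whereas you expand the wedges directly in coordinates --- and both are valid.
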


Before proving Proposition~\ref{prop:weight from degree sequence}, we illustrate it with the following examples.

\begin{exam}
\label{ex:arc diagram GL}
    Let $H = \GL_k$, where $k = 4$, $p=4$, $q=6$, and $\sigma = (4,3,2,-3)$.
    On the left-hand side below, we implement Step 1 of Definition~\ref{def:arc diagram}, starting with the diagram of a jellyfish $(\F, T) \in \mathcal{J}(\sigma)$.
    The numbers along the lattice paths are the exponents $d_{ij}$ in Definition~\ref{def:arc diagram}, which determine a standard monomial $\xi$ lying in the Stanley space indexed by $(\F, T)$.
    On the right-hand side, we implement Step 2 of Definition~\ref{def:arc diagram} to obtain the arc diagram of $\xi$, with the hyperedges highlighted below the vertices.
    Beneath the arc diagram, we use Proposition~\ref{prop:weight from degree sequence} to compute ${\rm wt}_\g(\xi)$:
    \tikzstyle{sq}=[rectangle,fill=black, minimum size = 10pt, inner sep=1pt,text=white, font=\tiny\sffamily\bfseries]
\tikzstyle{newcorner}=[rectangle,draw=black,thick, minimum size = 14pt, inner sep=0pt]
\tikzstyle{endpt}=[circle, fill=lightgray,minimum size = 7pt, inner sep=0pt]
\tikzstyle{highlight}=[rounded corners,line width=.8em,green!50!blue,nearly transparent,cap=round]
\tikzstyle{dot}=[circle,fill=black, minimum size = 5pt, inner sep=0pt]

\begin{center}
\begin{tikzpicture}[scale=.45,baseline=(current bounding box.center),every node/.style={scale=.7}]

\draw[lightgray,line width = 2pt] (4,4) -- ++(2,0) -- ++(0,-1) \foreach \x/\y in {1/0,1/-2,1/0}{
-- ++(\x,\y) node[endpt]{}
}
(3,3) -- ++(2,0) -- ++(0,-2) 
\foreach \x/\y in {0/-1,1/-1}{
-- ++(\x,\y) node[endpt]{}
}
(2,2) -- ++(1,0) -- ++(0,-1) -- ++(1,0)
\foreach \x/\y in {0/-1,0/-1,1/-1}{
-- ++(\x,\y) node[endpt]{}
}
(1,1) 
\foreach \x/\y in {0/-1,2/-1,0/-1,1/-1}{
-- ++(\x,\y) node[endpt]{}
};

\draw [white, fill=lightgray] 
(.5,4.5) -- ++ (3,0) -- ++ (0,-1) -- ++ (-1,0) -- ++ (0,-1) -- ++ (-1,0) -- ++ (0,-1) -- ++ (-1,0) -- cycle;

\foreach \x in {1,...,6}{    \foreach \y in {1,...,4}{        \node [dot] at (\x,\y) {};}}

\draw [densely dotted] (.5,4.5) rectangle (6.5,.5);

\node at (0,4) {$1^*$};
\node at (0,3) {$2^*$};
\node at (0,2) {$3^*$};
\node at (0,1) {$4^*$};
\node at (1,5) {$1$};
\node at (2,5) {$2$};
\node at (3,5) {$3$};
\node at (4,5) {$4$};
\node at (5,5) {$5$};
\node at (6,5) {$6$};

\node at (6,4) [newcorner] {};
\node at (3,2) [newcorner] {};
\node at (5,3) [newcorner] {};
\node at (6,4) [sq] {2};
\node at (2,3) [sq] {1};
\node at (1,1) [sq] {3};
\node at (5,2) [sq] {1};

\node[below=5pt of current bounding box.south,anchor=north, align=center,scale=1.3
    ]{$\xi = \underbrace{f_{16}^2 f_{22} f_{35} f_{41}^3}_{\mathbf{f} \in \C[f_{ij} : (i,j) \in \mathbf{F}]}\underbrace{f_{16}f_{25}f_{33}}_{f_{\cor(\F)}}\phi_{T}$ \\[2ex] $T = \left( \: \ytableaushort{1334,445,56}_{\textstyle ,} \ytableaushort{244,\none,\none} \:\right)$};

\end{tikzpicture}
\qquad $\leadsto$ \qquad
\begin{tikzpicture}[-,auto,
  thick,plain node/.style={minimum size=12pt,circle,draw,font=\tiny, fill = gray!30, inner sep=0pt}, 
  painted hypernode/.style={minimum size=4pt,inner sep=0pt,circle,draw,fill=black},bend left = 60, scale=.5,baseline=(current bounding box.center)]

\draw[highlight] (-2,-1) -- ++(0,0);
\draw[highlight] (0,-2) -- ++(0,0);
\draw[highlight] (0,-3) -- ++(0,0);
\draw[highlight] (2,-1) -- (6,-1);
\draw[highlight] (4,-2) -- (7,-2);
\draw[highlight] (4,-3) -- (6,-3);
\draw[highlight] (5,-4) -- (5,-4);

  \draw[ultra thick] (-2,0) \foreach \x/\y in {0/-1,2/-1,0/-1}{
-- ++(\x,\y) node[painted hypernode]{}
}
(2,0) \foreach \x/\y in {0/-1,2/-1,0/-1,1/-1}{
-- ++(\x,\y) node[painted hypernode]{}
}
(5,0) \foreach \x/\y in {0/-1,0/-1,1/-1}{
-- ++(\x,\y) node[painted hypernode]{}
}
(6,0) \foreach \x/\y in {0/-1,1/-1}{
-- ++(\x,\y) node[painted hypernode]{}
};

\node[plain node] (4*) at (0,0) {4*};
\node[plain node] (3*) at (-1,0) {3*};
\node[plain node] (2*) at (-2,0) {2*};
\node[plain node] (1*) at (-3,0) {1*};
\node (divider) at (1,0) {$\Bigg|$};
\node[plain node] (1) at (2,0) {1};
\node[plain node] (2) at (3,0) {2};
\node[plain node] (3) at (4,0) {3};
\node[plain node] (4) at (5,0) {4};
\node[plain node] (5) at (6,0) {5};
\node[plain node] (6) at (7,0) {6};

\draw [bend left=70] (1*) to (6);
\draw [bend left = 65] (1*) to (6);
\draw [bend left=60] (1*) to (6);
\draw [bend left=50] (2*) to (2);
\draw [bend left=65] (2*) to (5);
\draw [bend left=50] (3*) to (3);
\draw [bend left=60] (3*) to (5);
\draw [bend left=60] (4*) to (1);
\draw [bend left=50] (4*) to (1);
\draw [bend left=40] (4*) to (1);

\node[below=-10pt of current bounding box.south,anchor=north,align=center
    ]{${\rm wt}_\g(\xi) = $\\[1.5ex]
    $\phantom{-}(-3,-3,-2,-5 \mid 4,1,3,3,4,4)$\\\underline{$-(\phantom{-}4,\phantom{-}4,\phantom{-}4,\phantom{-}4, \mid 0,0,0,0,0,0)$} \\ $\phantom{-}(-7,-7,-6,-9 \mid 4,1,3,3,4,4)$};
    
\end{tikzpicture}
\end{center}
    
\end{exam}

\begin{exam}
\label{ex:arc diagram Sp}
We give another example of Definition~\ref{def:arc diagram} and Proposition~\ref{prop:weight from degree sequence}, this time where $H = \Sp_{2k}$, with $k = 3$ and $n = 8$, and $\sigma = (3,3,2)$:

\begin{center}
\tikzstyle{sq}=[rectangle,fill=black, minimum size = 10pt, inner sep=1pt,text=white, font=\tiny\sffamily\bfseries]
\tikzstyle{newcorner}=[rectangle,draw=black,thick, minimum size = 14pt, inner sep=0pt]
\tikzstyle{endpt}=[circle, fill=lightgray,minimum size = 7pt, inner sep=0pt]
\tikzstyle{highlight}=[rounded corners,line width=.8em,green!50!blue,nearly transparent,cap=round]
\tikzstyle{dot}=[circle,fill=black, minimum size = 5pt, inner sep=0pt]
\begin{tikzpicture}[scale=.45,baseline=(current bounding box.center),every node/.style={scale=.7}]
\draw [white,fill=lightgray] (1.5,8.5) -- ++ (0,-1) -- ++ (1,0) -- ++ (0,-1) -- ++ (1,0) -- ++ (0,-1) -- ++ (1,0) -- ++ (0,1)  -- ++ (1,0) -- ++ (0,1) -- ++ (1,0) -- ++ (0,1) -- cycle;
\draw [densely dotted] (1.5,8.5) -- ++(7,0) -- ++(0,-7) -- ++(-1,0) -- ++(0,1) -- ++(-1,0) -- ++(0,1) -- ++(-1,0) -- ++(0,1) -- ++(-1,0) -- ++(0,1) -- ++(-1,0) -- ++(0,1) -- ++(-1,0) -- ++(0,1) -- ++(-1,0) -- ++(0,1);
\draw[lightgray, line width = 2pt] (5,6) -- ++(0,-1) -- ++(1,0) -- ++(0,-1) -- ++(2,0) \foreach \x/\y in {1/0,1/-2}{
-- ++(\x,\y) node[endpt]{}
}
(6,7) -- ++(1,0) -- ++(0,-1) -- ++(1,0) node [newcorner] {} -- ++(0,-1) \foreach \x/\y in {1/0,1/0,1/-1}{
-- ++(\x,\y) node[endpt]{}
}
(7,8) -- ++(1,0) -- ++(0,-1) \foreach \x/\y in {1/0,1/-1,1/0}{
-- ++(\x,\y) node[endpt]{}
};

\node at (8,8) [newcorner] {};
\node at (7,7) [newcorner] {};
\node at (8,6) [newcorner] {};
\foreach \x in {2,...,8}{\foreach \y in {\x,...,8}{\node [dot] at (10-\x,\y) {};}}
\node at (0,8) {$1$};
\node at (0,7) {$2$};
\node at (0,6) {$3$};
\node at (0,5) {$4$};
\node at (0,4) {$5$};
\node at (0,3) {$6$};
\node at (0,2) {$7$};
\node at (0,1) {$8$};
\node at (1,9) {$1$};
\node at (2,9) {$2$};
\node at (3,9) {$3$};
\node at (4,9) {$4$};
\node at (5,9) {$5$};
\node at (6,9) {$6$};
\node at (7,9) {$7$};
\node at (8,9) {$8$};

\node at (2,8) [sq] {1};
\node at (4,8) [sq] {2};
\node at (7,7) [sq] {2};
\node at (4,6) [sq] {1};
\node at (5,5) [sq] {1};
\node at (8,4) [sq] {2};

\end{tikzpicture}
\qquad $\leadsto$ \qquad
\begin{tikzpicture}[-,auto,
  thick,plain node/.style={minimum size=12pt,circle,draw,font=\scriptsize, fill = gray!30, inner sep=0pt}, 
  painted hypernode/.style={minimum size=4pt,inner sep=0pt,circle,draw,fill=black},bend right = 70, scale=.55,baseline=(current bounding box.center)]

\draw[highlight] (-3,-.75) -- ++(-3,0);
\draw[highlight] (-1,-1.5) -- ++(-4,0);
\draw[highlight] (-3,-2.25) -- ++(-2,0);

\draw[ultra thick] (-6,0) \foreach \x/\y in {0/-.75,1/-.75,0/-.75}{
-- ++(\x,\y) node[painted hypernode]{}
};
\draw[ultra thick] (-4,0) \foreach \x/\y in {0/-.75,0/-.75,1/-.75}{
-- ++(\x,\y) node[painted hypernode]{}
};
\draw[ultra thick] (-3,0) \foreach \x/\y in {0/-.75,2/-.75}{
-- ++(\x,\y) node[painted hypernode]{}
};

\node[plain node] (8) at (0,0) {8};
\node[plain node] (7) at (-1,0) {7};
\node[plain node] (6) at (-2,0) {6};
\node[plain node] (5) at (-3,0) {5};
\node[plain node] (4) at (-4,0) {4};
\node[plain node] (3) at (-5,0) {3};
\node[plain node] (2) at (-6,0) {2};
\node[plain node] (1) at (-7,0) {1};

\draw [bend right = 80] (8) to (1);
\draw [bend right = 50] (2) to (1);
\draw [bend right = 60] (4) to (1);
\draw [bend right = 70] (4) to (1);
\draw [bend right = 60] (7) to (2);
\draw [bend right = 70] (7) to (2);
\draw [bend right = 80] (7) to (2);
\draw [bend right = 75] (8) to (3);
\draw [bend right = 50] (4) to (3);
\draw [bend right = 35] (5) to (4);
\draw [bend right = 70] (8) to (5);
\draw [bend right = 60] (8) to (5);

\node[below=5pt of current bounding box.south,anchor=north,align=center
    ]{${\rm wt}_\g(\xi) = $\\[1.5ex]
    $-(4,5,4,6,5,0,4,4)$\\\underline{$-(3,3,3,3,3,3,3,3)$} \\ $-(7,8,7,9,8,3,7,7)$};
\end{tikzpicture}
\end{center}

\end{exam}

\begin{proof}[Proof of Proposition~\ref{prop:weight from degree sequence}]

The proof for all three groups $H$ is identical, up to the details of the $\g$-action on $\C[W]$ given in Appendix~\ref{app:Howe}.
By~\eqref{Psi on f det}, when computing the weight of $\xi$ we may as well replace $\phi_T$ by $\det_T$, as defined in~\eqref{table dets GL Sp}.
We write ${\rm wt}_\g$ to denote the weight with the respect to the action of $\g$, upon restriction to its Cartan subalgebra consisting of diagonal matrices.
In writing out weight formulas below, we use the degree notation defined in~\eqref{deg notation}, and we write $\epsilon_i$ to denote the tuple with 1 in the $i$th coordinate and 0's elsewhere.

Let $H = \GL_k$.
The action of $\g = \gl_{p+q}$ on $\C[W]$ is given explicitly in Appendix~\ref{sub:appendix GL}, from which we observe that the weight of a monomial
\[
\mathbf{m} \in \C[W] = \C\Big[\{y_{ij} : (i,j) \in [p] \times [k]\} \cup \{x_{ij} : (i,j) \in [k] \times [q]\}\Big]
\]
is given by the $(p+q)$-tuple
\begin{equation}
\label{g weight GL}
{\rm wt}_\g(\mathbf{m}) = \Big( - \deg_{y_{1 \bullet}}(\mathbf{m}), \ldots, - \deg_{y_{p \bullet}}(\mathbf{m}) \: \Big| \: \deg_{x_{\bullet 1}}(\mathbf{m}), \ldots, \deg_{x_{\bullet q}}(\mathbf{m})  \Big) - (k^p \mid 0 ).
\end{equation}
Each arc $\{i^*,j\}$ in the arc diagram of $\xi$ contributes $(\epsilon_i \mid \epsilon_{j})$ to the degree sequence;
it also represents a factor $f_{ij}$ in $\mathbf{f} \cdot f_{\cor(\F)}$, which contributes $(-\epsilon_i \mid  \epsilon_j)$ to ${\rm wt}_\g(\xi)$, by~\eqref{table coordinates} and~\eqref{g weight GL}.
Therefore, upon negating the degrees of the starred vertices $i^*$, the arcs combine to contribute ${\rm wt}_\g(\mathbf{f} \cdot f_{\cor(\F)})$ to the degree sequence of the arc diagram.
Within a hyperedge, a dot below vertex $i^*$ (\resp vertex $j$) contributes $(\epsilon_i \mid 0 )$ (\resp $(0 \mid \epsilon_j)$) to the degree sequence of the arc diagram;
it also corresponds to an entry $i$ in $T^-$ (\resp an entry $j$ in $T^+$), which by~\eqref{table dets GL Sp} and~\eqref{g weight GL} contributes $(-\epsilon_i \mid 0)$ (\resp $(0 \mid \epsilon_{j})$) to ${\rm wt}_\g(\det_T)$.
 Therefore, upon negating the degrees of the starred vertices $i^*$, the hyperedges combine to contribute precisely ${\rm wt}_\g(\det_T)$ to the degree sequence.
 Thus, by combining the arcs and the hyperedges, we conclude that ${\rm wt}_\g(\xi)$ is obtained from the degree sequence of the arc diagram of $\xi$ by negating and then subtracting $k$ from the degrees of the starred vertices.

Let $H = \Sp_{2k}$.
The action of $\g = \so_{2n}$ on $\C[W]$ is given explicitly in Appendix~\ref{sub:appendix Sp}, from which we observe that the weight of a monomial
\[
    \mathbf{m} \in \C[W] = \C\Big[x_{ij} : (i,j) \in [2k] \times [n]\Big]
\]
is given by the $n$-tuple
\begin{equation}
    \label{g weight Sp}
    {\rm wt}_\g(\mathbf{m}) = \Big( - \deg_{x_{\bullet 1}}(\mathbf{m}), \ldots, - \deg_{x_{\bullet n}}(\mathbf{m}) \Big) - (k^n).
\end{equation}
Each arc $\{i,j\}$ in the arc diagram of $\xi$ contributes $\epsilon_i + \epsilon_j$ to the degree sequence;
it also represents a factor $f_{ij}$ in $\mathbf{f} \cdot f_{\cor(\F)}$, which contributes $-(\epsilon_i + \epsilon_j)$ to ${\rm wt}_\g(\xi)$, by~\eqref{table coordinates} and~\eqref{g weight Sp}.
Therefore the arcs together contribute $-{\rm wt}_\g(\mathbf{f} \cdot f_{\cor(\F)})$ to the degree sequence of the arc diagram.
Within a hyperedge, a dot below vertex $i$ contributes $\epsilon_i$ to the degree sequence;
 it also corresponds to an entry $i$ in $T$, which by~\eqref{table dets GL Sp} and~\eqref{g weight Sp} contributes $-\epsilon_i$ to ${\rm wt}_\g(\det_T)$.
 Therefore the hyperedges together contribute precisely $-{\rm wt}_\g(\det_T)$ to the degree sequence.
 Thus, by combining the arcs and the hyperedges, we see from~\eqref{g weight Sp} that ${\rm wt}_\g(\xi)$ is obtained from the negative of the degree sequence of the arc diagram of $\xi$, upon subtracting $k$ from every coordinate.

Let $H = \O_k$.
The action of $\g = \sp_{2n}$ on $\C[W]$ is given explicitly in Appendix~\ref{sub:appendix O}, from which we observe that the weight of a monomial 
\[
\mathbf{m} \in \C[W] = \C\Big[x_{ij} : (i,j) \in [k] \times [n]\Big]
\]
is given by the $n$-tuple
\begin{equation}
\label{g weight O}
    {\rm wt}_\g(\mathbf{m}) = \Big( - \deg_{x_{\bullet 1}}(\mathbf{m}), \ldots, - \deg_{x_{\bullet n}}(\mathbf{m}) \Big) - ((\tfrac{k}{2})^n).
\end{equation}
  The rest of the proof is identical to the $\Sp_{2k}$ case, upon replacing $\det_T$ by $\phi_T$.
\end{proof}

\section{Wallach representations of type ADE}
\label{sec:ADE}

In this final section, we apply our jellyfish approach to certain unitary highest weight modules beyond those modules $L_{\la(\sigma)}$ arising in the Howe duality setting.
In particular, we consider certain distinguished modules known as  \emph{Wallach representations}, in all cases where $\g$ is simply laced (\ie of type $\ssA$, $\ssD$, or $\ssE$), and we reinterpret their Hilbert series in terms of lattice paths.

\subsection*{Hermitian symmetric pairs}

Suppose $G_\R$ is a connected real reductive group 
and $K_\R\subset G_\R$ is a maximal compact subgroup such that $G_\R/K_\R$ is an irreducible Hermitian symmetric space of noncompact type. 
Let $\g_{\R} = \k_{\R} \oplus \p_{\R}$ be a 
Cartan decomposition of the Lie algebra of $G_\R$, and let 
$\g = \k \oplus \p$ be the corresponding decomposition
of the complexified Lie algebra. 
 From the general theory, there exists a distinguished element $h_0 \in \mathfrak{z}(\k)$ such that $\operatorname{ad} h_0$ acts on $\g$ with eigenvalues $0$ and $\pm 1$.  
 We thus have a triangular decomposition 
 \[
    \g = \p^- \oplus \k \oplus \p^+,
 \]
 where $\p^{\pm} = \{ x \in \g : [h_0, x] = \pm x\}$.
 The subalgebra $\q = \k \oplus \p^+$ is a maximal parabolic subalgebra of $\g$, with Levi subalgebra $\k$ and abelian nilradical $\p^+$.
Upon fixing a Cartan subalgebra $\h$ of both $\g$ and $\k$, we write $\Phi$ for the root system of $(\g,\h)$, and $\g_\alpha$ for the root space corresponding to the root $\alpha \in \Phi$.
Then we put
\[
\Phi(\p^+) \coloneqq \{\alpha \in \Phi : \g_\alpha \subseteq \p^+\}.
\]

\label{r} Let $r$ denote the real rank of $G_{\R}$.
Recall that two roots $\alpha, \beta$ are called \emph{strongly orthogonal} if neither $\alpha + \beta$ nor $\alpha - \beta$ is a root.
We adopt Harish-Chandra's maximal set $\{\gamma_1, \ldots, \gamma_r\}$ of strongly orthogonal roots in $\Phi(\p^+)$, which are defined recursively as follows.
Let $\gamma_1$ be the lowest root in $\Phi(\p^+)$;
then for $1 < i \leq r$, let $\gamma_i$ be the lowest root in $\Phi(\p^+)$ that is strongly orthogonal to each of $\gamma_1, \ldots, \gamma_{i-1}$.  
For any nonnegative integers $n_1 \geq \cdots \geq n_r \geq 0$, the weight $-\sum_i n_i \gamma_i$ is $\k$-dominant and integral.  
We have the following multiplicity-free decomposition due to Schmid~\cite{Schmid}:
\[
    \C[\p^+] \cong \bigoplus_{\mathclap{n_1 \geq \cdots \geq n_r \geq 0}} F_{-n_1 \gamma_1 -\cdots - n_r\gamma_r},
\]
where $F_\mu$ denotes the simple $\k$-module with highest weight $\mu$.

As a specific example, we revisit the Howe duality setting of Section~\ref{sec:proofs}, where we have the following concrete realizations (with $\SM_n$ and $\AM_n$ denoting the spaces of $n \times n$ symmetric and alternating matrices, respectively):

\begin{center}
\label{new Howe table}
\begin{tblr}{colspec={|Q[m,c]|Q[m,c]|Q[m,c]|Q[m,c]|Q[m,c]|Q[m,c]|Q[m,c]|Q[m,c]|},stretch=1.5}

\hline

$H$ & $\g$ & $\g = \left\{\left[\begin{smallmatrix}A&B\\C&D\end{smallmatrix}\right] : \ldots \right\}$ & $\k = \left\{\left[\begin{smallmatrix}A&0\\0&D\end{smallmatrix}\right]\right\}$ & $\p^+ =  \left\{\left[\begin{smallmatrix}
    0 & B \\ 0 & 0
\end{smallmatrix}\right]\right\}$ & $r$ & $\gamma_i$ \\ \hline[2pt]

$\GL_k$ & $\gl_{p+q}$ & no constraints & $\gl_p \oplus \gl_q$ & $\M_{p,q}$ & $\min\{p,q\}$ & $( \overleftarrow{\epsilon_i} \mid -\overrightarrow{\epsilon_i})$ \\ \hline

$\O_k$ & $\sp_{2n}$ & {$A=-D^t$, \\ $B=B^t$, \\ $C=C^t$} & $\gl_n$ & $\SM_n$ & $n$ & $\overleftarrow{2\epsilon_i}$ \\ \hline

$\Sp_{2k}$ & $\so_{2n}$ & {$A=-D^t$, \\ $B=-B^t$, \\ $C=-C^t$} & $\gl_n$ & $\AM_n$ & $\lfloor n/2 \rfloor$ & $\overleftarrow{\epsilon_{2i-1} + \epsilon_{2i}}$ \\ \hline

\end{tblr}
\end{center}

\noindent In this setting, one can define an $H$-invariant polynomial map $\pi: W \longrightarrow \p^+$ as follows:
\begin{alignat*}{3}
    &(H = \GL_k) \qquad && \pi: \M_{p,k} \oplus \M_{k,q} \longrightarrow \M_{p,q}, \qquad && (Y,X) \longmapsto YX,\\
    &(H = \O_k) \qquad && \pi: \M_{k,n} \longrightarrow \SM_n, \qquad && X \longmapsto X^t X,\\
    &(H = \Sp_{2k}) \qquad && \pi: \M_{2k,n} \longrightarrow \AM_n, \qquad && X \longmapsto X^t \left( \begin{smallmatrix}
    0 & I \\ -I & 0
\end{smallmatrix}\right) X.
    \end{alignat*}
 Then $\pi(W) \subseteq \p^+$ is either all of $\p^+$ (if $k \geq r$), or else is the determinantal variety $\M^{\leqslant k}_{p,q}$ or $\SM^{\leqslant k}_n$ or $\AM^{\leqslant 2k}_n$, respectively; here $\M^{\leqslant k}_{p,q}$ denotes the set of matrices with rank at most $k$, and similarly in the other two cases.
 Thus for $k<r$, the map $\pi$ descends to an isomorphism $W /\!\!/ H \cong \M^{\leqslant k}_{p,q}$ (\resp $\SM^{\leqslant k}_n$ or $\AM^{\leqslant 2k}_n$).
Moreover, $\pi$ induces a surjective comorphism
\begin{align}
    \label{pi star again}
    \begin{split}
    \pi^* : \C[\mathfrak{p}^+] & \longrightarrow \C[W]^H, \\
    z_{ij} & \longmapsto f_{ij},
    \end{split}
\end{align}
where the $z_{ij}$'s are the standard matrix coordinates on $\p^+$, and the $f_{ij}$'s are the  contractions in~\eqref{table P fij}.
Note that $\pi^*$ is the same map introduced in~\eqref{pi star map}, since we have $S = \C[\mathbf{P}] = \C[\p^+]$.
In all three Howe duality settings, we have $\mathbf{P} \cong \Phi(\p^+)$ as sets;
moreover, in the two cases where $\g$ is simply laced (\ie where $\g = \gl_{p+q}$ or $\so_{2n}$), we have a \emph{poset} isomorphism $\mathbf{P} \cong \Phi(\p^+)$,
where the partial order on $\mathbf{P}$ is given by~\eqref{partial order on P}, and the partial order on $\Phi(\p^+)$ is inherited from the standard partial order on $\Phi$.
Explicitly, the poset isomorphism $\mathbf{P} \longrightarrow \Phi(\p^+)$ is given by
\[
    \begin{cases}
        (i,j) \longmapsto 
(\overleftarrow{\epsilon_i} \mid - \overrightarrow{\epsilon_j}), & (H, \g) = (\GL_k, \gl_{p+q}),\\[2ex]
(i,j) \longmapsto \overleftarrow{\epsilon_i + \epsilon_j}, & (H, \g) = (\Sp_{2k}, \so_{2n}).
    \end{cases}
\]

\subsection*{The Wallach representations}

Let $K$ be the complexification of $K_\R$.
Then $\k$ is the Lie algebra of $K$, and the adjoint action of $\k$ on
$\g$ exponentiates to a $K$-action.
In particular, $K$ acts on $\p^+$.
The $K$-orbits in $\p^+$ are  
$\scr{O}_0:=\{0\}$ and
$\scrO_k:=K\cdot(e_{\gamma_1}+\dots +e_{\gamma_k})$ for $1\leq k\leq r$, where $e_{\gamma_i}\in \g_{\gamma_i}$ denotes a root vector corresponding to $\gamma_i$.
The closures of the $K$-orbits in $\p^+$ form a chain of algebraic varieties
\[
\{0\} = \overline{\scrO}_0 \subset \overline{\scrO}_1 \subset \cdots \subset \overline{\scrO}_r = \p^+.
\]
For each $0 \leq k \leq r$, the coordinate ring of $\overline{\scrO}_k$ can be viewed as a truncation of the Schmid decomposition above~\cite{EHW}:
\[
    \C[\overline{\scrO}_k] \cong \bigoplus_{k\geq n_1 \geq \cdots \geq n_r \geq 0} F_{-n_1 \gamma_1 -\cdots - n_r\gamma_r.}
\]

Let $c \coloneqq \frac{1}{2}(\rho, \: \gamma_2^\vee - \gamma_1^\vee)$, where as usual $\rho$ is half the sum of the positive roots of $\g$, and $( \:, \:)$ is the nondegenerate bilinear form on $\h^*$ induced from the Killing form of $\g$.
Let $\zeta$ be the unique fundamental
weight of $\g$ that is orthogonal to the roots of $\k$.
Then for each $1 \leq k < r$, the \emph{$k$th Wallach representation} is the simple $\g$-module $L_{-kc\zeta}$.
The determinantal variety $\overline{\scrO}_k$ is the associated variety of the $k$th Wallach representation.

In the Howe duality setting of Section~\ref{sec:proofs}, one can check (using the $\la(\sigma)$ column of the table in Theorem~\ref{thm:Howe duality}) that the $k$th Wallach representation is given by
\[
L_{-kc\zeta} = L_{\lambda(0)} \cong \widetilde{M_0},
\]
where $M_0 = \C[W]^H$ is the ring of invariants.
Therefore, when $(\g,\k) = (\gl_{p+q}, \gl_p \oplus \gl_q)$ or $(\sp_{2n}, \gl_n)$ or $(\so_{2n}, \gl_n)$, the Stanley decompositions and Hilbert series for the Wallach representations are obtained by specializing to $\sigma = 0$ in Theorem~\ref{thm:Stanley decomps and HS} and Corollary~\ref{cor:HS}.
(One should replace the $f_{ij}$'s by their preimages $z_{ij}$ in $S = \C[\p^+]$, and thus also replace each $t^2$ by $t$ in the Hilbert series.)
Note that in this $\sigma = 0$ case, we have $\mathcal{J}(0) = \mathcal{F}(\Delta_k(\mathbf{P})) \times \{ \varnothing \}$, where the \emph{$k$th order complex} is defined to be
\[
    \Delta_k(\mathbf{P}) \coloneqq \{ \mathbf{S} \subseteq \mathbf{P} : {\rm width}(\mathbf{S}) \leq k \}.
\]
As a result, in the special case $\sigma = 0$, for the two simply laced cases (where $\mathbf{P} \cong \Phi(\p^+)$), Theorem~\ref{thm:Stanley decomps and HS} can be rewritten as an isomorphism of graded vector spaces:
\begin{equation}
    \label{Wallach rep iso}
    L_{-kc\zeta} \cong \bigoplus_{\F \in \mathcal{F}(\Delta_k(\Phi(\p^+)))} \C[\F] z_{{\rm res}(\F)}.
\end{equation}
Indeed, it is a general fact for Hermitian symmetric pairs~\cite{Jakobsen}*{Lemma~4.1} that the Hasse diagram of $\Phi(\p^+)$ is a planar distributive lattice, and therefore its order complexes are shellable~\cite{Bjorner80}*{Thm.~7.1}.
This suggests that the Stanley decomposition~\eqref{Wallach rep iso} might be valid for other Hermitian symmetric pairs $(\g,\k)$, which would yield the Hilbert series
\begin{equation}
    \label{Hilbert series Wallach}
    P(L_{-kc\zeta};t) = \frac{\sum_{\F \in \mathcal{F}(\Delta_k(\Phi(\p^+)))} t^{\#{\rm res}(\F)}}{(1-t)^d},
\end{equation}
where $d$ is the common size of all facets $\F$.

We have found that~\eqref{Hilbert series Wallach} does indeed hold true whenever $\g$ is simply laced (\ie when $\g$ is of Killing--Cartan type $\ssA$, $\ssD$, or $\ssE$).
Below, we verify~\eqref{Hilbert series Wallach} for all Hermitian symmetric pairs of simply laced type.
The Hilbert series for the Wallach representations were previously computed in~\cite{EW} and~\cite{EnrightHunziker04}*{\S6.6--6.8} and~\cite{EnrightHunzikerExceptional}, by using Enright--Shelton reduction to compute generalized BGG resolutions.
Our formula~\eqref{Hilbert series Wallach}, on the other hand, provides a new combinatorial interpretation of these Hilbert series in terms of nonintersecting lattice paths in $\Phi(\p^+)$.
As mentioned above, types $(\ssA_{p+q-1}, \ssA_{p-1} \times \ssA_{q-1})$ and $(\ssD_n, \ssA_{n-1})$ occur in the Howe duality setting and therefore are included already in Corollary~\ref{cor:HS}, for $H = \GL_k$ and $\Sp_{2k}$, respectively.
This leaves three simply laced cases: $(\ssD_n, \ssD_{n-1})$ and $(\ssE_6, \ssD_5)$, for $k<r=2$, and $(\ssE_7, \ssE_6)$, for $k<r=3$.
In each case, we rotate the Hasse diagram of $\Phi(\p^+)$ so that the minimal element $\gamma_1$ is in the upper-left.
In this way, saturated chains are southeast lattice paths.
We continue to suppress the edges in the Hasse diagrams, in order to make the lattice paths clearly visible; there is no loss of information, since each point is covered by the point(s) immediately to its south and its east.
We also continue to indicate the elements of ${\rm res}(\F)$ by drawing squares around them; these are easily determined by imposing a binary EL-labeling on $\Phi(\p^+)$ as we did in Section~\ref{sec:proofs}, and then using our characterization of corners in~\eqref{corner Sp}.

\subsection*{First Wallach representation of $(\ssD_n, \ssD_{n-1})$}
\label{sub:Wallach Dn}

For $(\g,\k) = (\ssD_n, \ssD_{n-1})$, 
the Hasse diagram of $\Phi(\p^+)$ has $2(n-1)$ many vertices, arranged as two chains of length $n-2$ joined by a square.
Since $r=2$, there is only one Wallach representation, corresponding to $k=1$, where $-kc\zeta = -(n+2)\epsilon_1$.
The facets of $\Delta_1(\Phi(\p^+))$ are precisely the maximal southeast lattice paths in $\Phi(\p^+)$.
There are only two facets $\F_0$ and $\F_1$ (shown below in the case $n=6$):

\tikzstyle{dot}=[scale=.7,circle,fill=black, minimum size = 4.5pt, inner sep=0pt]
    \tikzstyle{corner}=[rectangle,draw=black,thin, minimum size = 6pt, inner sep=2pt]

\begin{center}
\begin{tikzpicture}[scale=.3, baseline]

\draw [densely dotted] (.5,.5) -- ++(0,5) --++(1,0) -- ++(0,-3) -- ++(1,0) -- ++(0,-1) -- ++(3,0) -- ++(0,-1) -- cycle;

\node at (1,1) [dot] {};
\node at (1,2) [dot] {};
\node at (1,3) [dot] {};
\node at (1,4) [dot] {};
\node at (1,5) [dot] {};
\node at (2,1) [dot] {};
\node at (3,1) [dot] {};
\node at (4,1) [dot] {};
\node at (5,1) [dot] {};
\node at (2,2) [dot] {};

\node[left=5pt of current bounding box.west,anchor=east
    ]{$\Phi(\p^+) =$};
\end{tikzpicture}
\qquad
\begin{tikzpicture}[scale=.3, baseline]

\draw [densely dotted] (.5,.5) -- ++(0,5) --++(1,0) -- ++(0,-3) -- ++(1,0) -- ++(0,-1) -- ++(3,0) -- ++(0,-1) -- cycle;

\draw[lightgray, line width=3pt] (1,5) -- (1,1) -- (5,1);

\node at (1,1) [dot] {};
\node at (1,2) [dot] {};
\node at (1,3) [dot] {};
\node at (1,4) [dot] {};
\node at (1,5) [dot] {};
\node at (2,1) [dot] {};
\node at (3,1) [dot] {};
\node at (4,1) [dot] {};
\node at (5,1) [dot] {};
\node at (2,2) [dot] {};

\node[left=5pt of current bounding box.west,anchor=east
    ]{$\F_0 =$};
\end{tikzpicture}
\quad
\begin{tikzpicture}[scale=.3, baseline]

\draw [densely dotted] (.5,.5) -- ++(0,5) --++(1,0) -- ++(0,-3) -- ++(1,0) -- ++(0,-1) -- ++(3,0) -- ++(0,-1) -- cycle;

\draw[lightgray, line width=3pt] (1,5) -- (1,2) -- (2,2) node [corner] {} -- (2,1) -- (5,1);

\node at (1,1) [dot] {};
\node at (1,2) [dot] {};
\node at (1,3) [dot] {};
\node at (1,4) [dot] {};
\node at (1,5) [dot] {};
\node at (2,1) [dot] {};
\node at (3,1) [dot] {};
\node at (4,1) [dot] {};
\node at (5,1) [dot] {};
\node at (2,2) [dot] {};

\node[left=5pt of current bounding box.west,anchor=east
    ]{$\F_1 =$};
\end{tikzpicture}
\end{center}

\noindent Note that each facet has cardinality $d = 2n-3$.
Following~\eqref{Hilbert series Wallach} yields the Hilbert series below, which coincides with~\cite{EnrightHunziker04}*{Thm.~26}:
\[
P(L_{-kc\zeta};t) = \frac{1+t}{(1-t)^{2n-3}}.
\]

\subsection*{First Wallach representation of $(\ssE_6, \ssD_5)$}
\label{sub:Wallach E6}

For $(\g,\k) = (\ssE_6, \ssD_5)$, since $r=2$, there is only one Wallach representation, corresponding to $k=1$, where $-kc\zeta = -3\zeta$.
The facets of $\Delta_1(\Phi(\p^+))$ are again just maximal southeast lattice paths in $\Phi(\p^+)$.

\tikzstyle{dot}=[scale=.7,circle,fill=black, minimum size = 4.5pt, inner sep=0pt]
\tikzstyle{corner}=[rectangle,draw=black,thin, minimum size = 6pt, inner sep=2pt]

\begin{center}

\begin{tikzpicture}[scale=.3, baseline]

\draw [densely dotted] (.5,1.5) -- ++(5,0) -- ++(0,-2) -- ++(1,0) -- ++(0,-1) -- ++(2,0) -- ++ (0,-1) -- ++(-5,0) -- ++(0,2) -- ++(-1,0) -- ++(0,1) -- ++ (-2,0) -- cycle;

\node at (1,1) [dot] {};
\node at (2,1) [dot] {};
\node at (3,1) [dot] {};
\node at (4,1) [dot] {};
\node at (5,1) [dot] {};
\node at (3,0) [dot] {};
\node at (4,0) [dot] {};
\node at (5,0) [dot] {};
\node at (4,-1) [dot] {};
\node at (5,-1) [dot] {};
\node at (6,-1) [dot] {};
\node at (4,-2) [dot] {};
\node at (5,-2) [dot] {};
\node at (6,-2) [dot] {};
\node at (7,-2) [dot] {};
\node at (8,-2) [dot] {};

\node[left=0pt of current bounding box.west,anchor=east
    ]{$\Phi(\p^+) =$};
\end{tikzpicture}
\qquad
\begin{tikzpicture}[scale=.3, baseline]

\draw[lightgray, line width=3pt] (1,1) -- ++(2,0) -- ++(0,-1) -- ++(2,0) node [corner] {} -- ++(0,-1) -- ++(1,0) node [corner] {} -- ++(0,-1) -- ++(2,0);

\draw [densely dotted] (.5,1.5) -- ++(5,0) -- ++(0,-2) -- ++(1,0) -- ++(0,-1) -- ++(2,0) -- ++ (0,-1) -- ++(-5,0) -- ++(0,2) -- ++(-1,0) -- ++(0,1) -- ++ (-2,0) -- cycle;

\node at (1,1) [dot] {};
\node at (2,1) [dot] {};
\node at (3,1) [dot] {};
\node at (4,1) [dot] {};
\node at (5,1) [dot] {};
\node at (3,0) [dot] {};
\node at (4,0) [dot] {};
\node at (5,0) [dot] {};
\node at (4,-1) [dot] {};
\node at (5,-1) [dot] {};
\node at (6,-1) [dot] {};
\node at (4,-2) [dot] {};
\node at (5,-2) [dot] {};
\node at (6,-2) [dot] {};
\node at (7,-2) [dot] {};
\node at (8,-2) [dot] {};

\node[left=0pt of current bounding box.west,anchor=east
    ]{Example of a facet:};
\end{tikzpicture}

\end{center}

\noindent All twelve facets have cardinality $d = 11$.
Following~\eqref{Hilbert series Wallach} yields the Hilbert series below, which agrees with~\cite{EnrightHunziker04}*{Thm.~28}:
\[
P(L_{-kc\zeta};t) = \frac{1+5t+5t^2+t^3}{(1-t)^{11}}.
\]

\subsection*{First Wallach representation of $(\ssE_7, \ssE_6)$}
\label{sub:Wallach E7 k1}

For $(\g, \k) = (\ssE_7, \ssE_6)$, since $r=3$, there are two Wallach representations.
For $k=1$, we have $-kc\zeta = -4\zeta$, and the facets of $\Delta_1(\Phi(\p^+))$ are again the maximal southeast lattice paths in $\Phi(\p^+)$:

\tikzstyle{dot}=[scale=.7,circle,fill=black, minimum size = 4.5pt, inner sep=0pt]
\tikzstyle{corner}=[rectangle,draw=black,thin, minimum size = 6pt, inner sep=2pt]

\begin{center}

\begin{tikzpicture}[scale=.3, baseline]

\draw [densely dotted] (-.5,1.5) -- ++(6,0) -- ++(0,-2) -- ++(1,0) -- ++(0,-1) -- ++(2,0) -- ++(0,-6) -- ++(-1,0) -- ++(0,3) -- ++(-1,0) -- ++(0,1) -- ++(-3,0) -- ++(0,3) -- ++(-1,0) -- ++(0,1) -- ++(-3,0) -- cycle;

\node at (0,1) [dot] {};
\node at (1,1) [dot] {};
\node at (2,1) [dot] {};
\node at (3,1) [dot] {};
\node at (4,1) [dot] {};
\node at (5,1) [dot] {};
\node at (3,0) [dot] {};
\node at (4,0) [dot] {};
\node at (5,0) [dot] {};
\node at (4,-1) [dot] {};
\node at (5,-1) [dot] {};
\node at (6,-1) [dot] {};
\node at (4,-2) [dot] {};
\node at (5,-2) [dot] {};
\node at (6,-2) [dot] {};
\node at (6,-3) [dot] {};
\node at (4,-3) [dot] {};
\node at (5,-3) [dot] {};
\node at (7,-2) [dot] {};
\node at (8,-2) [dot] {};
\node at (7,-3) [dot] {};
\node at (8,-3) [dot] {};
\node at (7,-4) [dot] {};
\node at (8,-4) [dot] {};
\node at (8,-5) [dot] {};
\node at (8,-6) [dot] {};
\node at (8,-7) [dot] {};

\node[left=0pt of current bounding box.west,anchor=east
    ]{$\Phi(\p^+) =$};
\end{tikzpicture}
\qquad
\begin{tikzpicture}[scale=.3, baseline]

\draw[lightgray, line width=3pt] (0,1) -- ++(4,0) node [corner] {} -- ++(0,-2) -- ++(2,0) node [corner] {} -- ++(0,-1) -- ++(1,0) node [corner] {} -- ++(0,-2) -- ++(1,0) -- ++(0,-3);

\draw [densely dotted] (-.5,1.5) -- ++(6,0) -- ++(0,-2) -- ++(1,0) -- ++(0,-1) -- ++(2,0) -- ++(0,-6) -- ++(-1,0) -- ++(0,3) -- ++(-1,0) -- ++(0,1) -- ++(-3,0) -- ++(0,3) -- ++(-1,0) -- ++(0,1) -- ++(-3,0) -- cycle;

\node at (0,1) [dot] {};
\node at (1,1) [dot] {};
\node at (2,1) [dot] {};
\node at (3,1) [dot] {};
\node at (4,1) [dot] {};
\node at (5,1) [dot] {};
\node at (3,0) [dot] {};
\node at (4,0) [dot] {};
\node at (5,0) [dot] {};
\node at (4,-1) [dot] {};
\node at (5,-1) [dot] {};
\node at (6,-1) [dot] {};
\node at (4,-2) [dot] {};
\node at (5,-2) [dot] {};
\node at (6,-2) [dot] {};
\node at (6,-3) [dot] {};
\node at (4,-3) [dot] {};
\node at (5,-3) [dot] {};
\node at (7,-2) [dot] {};
\node at (8,-2) [dot] {};
\node at (7,-3) [dot] {};
\node at (8,-3) [dot] {};
\node at (7,-4) [dot] {};
\node at (8,-4) [dot] {};
\node at (8,-5) [dot] {};
\node at (8,-6) [dot] {};
\node at (8,-7) [dot] {};

\node[left=0pt of current bounding box.west,anchor=east
    ]{Example of a facet:};
\end{tikzpicture}

\end{center}

\noindent This gives us $78$ facets, all having cardinality $d = 17$.
Counting the sizes of the restrictions, and following~\eqref{Hilbert series Wallach}, we obtain the Hilbert series below, which agrees with~\cite{EnrightHunziker04}*{Thm.~29}: 
\[
P(L_{-kc\zeta}; t) = \frac{ 1+10t+28t^2 +28t^3 +10t^4 +t^5}{(1-t)^{17}}.
\]

\subsection*{Second Wallach representation of $(\ssE_7, \ssE_6)$}
\label{sub:Wallach E7 k2}

The poset $\Phi(\p^+)$ is the same as above; this time, for $k=2$, we have $-kc\zeta = -8\zeta$.
Each facet of $\Delta_2(\Phi(\p^+))$ is a maximal union of two nonintersecting southeast lattice paths in $\Phi(\p^+)$.
There are only three facets, each with cardinality $d=26$:

\tikzstyle{dot}=[scale=.7,circle,fill=black, minimum size = 4.5pt, inner sep=0pt]
\tikzstyle{corner}=[rectangle,draw=black,thin, minimum size = 6pt, inner sep=2pt]

\begin{center}

\begin{tikzpicture}[scale=.3, baseline]

\draw[lightgray, line width=3pt] (0,1) -- ++(5,0) -- ++(0,-3) -- ++(3,0) -- ++(0,-5) (3,0) -- ++(1,0) -- ++(0,-3) -- ++(3,0) -- ++(0,-1);

\draw [densely dotted] (-.5,1.5) -- ++(6,0) -- ++(0,-2) -- ++(1,0) -- ++(0,-1) -- ++(2,0) -- ++(0,-6) -- ++(-1,0) -- ++(0,3) -- ++(-1,0) -- ++(0,1) -- ++(-3,0) -- ++(0,3) -- ++(-1,0) -- ++(0,1) -- ++(-3,0) -- cycle;

\node at (0,1) [dot] {};
\node at (1,1) [dot] {};
\node at (2,1) [dot] {};
\node at (3,1) [dot] {};
\node at (4,1) [dot] {};
\node at (5,1) [dot] {};
\node at (3,0) [dot] {};
\node at (4,0) [dot] {};
\node at (5,0) [dot] {};
\node at (4,-1) [dot] {};
\node at (5,-1) [dot] {};
\node at (6,-1) [dot] {};
\node at (4,-2) [dot] {};
\node at (5,-2) [dot] {};
\node at (6,-2) [dot] {};
\node at (6,-3) [dot] {};
\node at (4,-3) [dot] {};
\node at (5,-3) [dot] {};
\node at (7,-2) [dot] {};
\node at (8,-2) [dot] {};
\node at (7,-3) [dot] {};
\node at (8,-3) [dot] {};
\node at (7,-4) [dot] {};
\node at (8,-4) [dot] {};
\node at (8,-5) [dot] {};
\node at (8,-6) [dot] {};
\node at (8,-7) [dot] {};

\node[left=0pt of current bounding box.west,anchor=east
    ]{$\F_0 =$};
\end{tikzpicture}
\qquad
\begin{tikzpicture}[scale=.3, baseline]

\draw[lightgray, line width=3pt] (0,1) -- ++(5,0) -- ++(0,-2) -- ++(1,0) node [corner] {} -- ++(0,-1) -- ++(2,0) -- ++(0,-5) (3,0) -- ++(1,0) -- ++(0,-3) -- ++(3,0) -- ++(0,-1);

\draw [densely dotted] (-.5,1.5) -- ++(6,0) -- ++(0,-2) -- ++(1,0) -- ++(0,-1) -- ++(2,0) -- ++(0,-6) -- ++(-1,0) -- ++(0,3) -- ++(-1,0) -- ++(0,1) -- ++(-3,0) -- ++(0,3) -- ++(-1,0) -- ++(0,1) -- ++(-3,0) -- cycle;

\node at (0,1) [dot] {};
\node at (1,1) [dot] {};
\node at (2,1) [dot] {};
\node at (3,1) [dot] {};
\node at (4,1) [dot] {};
\node at (5,1) [dot] {};
\node at (3,0) [dot] {};
\node at (4,0) [dot] {};
\node at (5,0) [dot] {};
\node at (4,-1) [dot] {};
\node at (5,-1) [dot] {};
\node at (6,-1) [dot] {};
\node at (4,-2) [dot] {};
\node at (5,-2) [dot] {};
\node at (6,-2) [dot] {};
\node at (6,-3) [dot] {};
\node at (4,-3) [dot] {};
\node at (5,-3) [dot] {};
\node at (7,-2) [dot] {};
\node at (8,-2) [dot] {};
\node at (7,-3) [dot] {};
\node at (8,-3) [dot] {};
\node at (7,-4) [dot] {};
\node at (8,-4) [dot] {};
\node at (8,-5) [dot] {};
\node at (8,-6) [dot] {};
\node at (8,-7) [dot] {};

\node[left=0pt of current bounding box.west,anchor=east
    ]{$\F_1 =$};
\end{tikzpicture}
\qquad
\begin{tikzpicture}[scale=.3, baseline]

\draw[lightgray, line width=3pt] (0,1) -- ++(5,0) -- ++(0,-2) -- ++(1,0) node [corner] {} -- ++(0,-1) -- ++(2,0) -- ++(0,-5) (3,0) -- ++(1,0) -- ++(0,-2) -- ++(1,0) node [corner] {} -- ++(0,-1) -- ++(2,0) -- ++(0,-1);

\draw [densely dotted] (-.5,1.5) -- ++(6,0) -- ++(0,-2) -- ++(1,0) -- ++(0,-1) -- ++(2,0) -- ++(0,-6) -- ++(-1,0) -- ++(0,3) -- ++(-1,0) -- ++(0,1) -- ++(-3,0) -- ++(0,3) -- ++(-1,0) -- ++(0,1) -- ++(-3,0) -- cycle;

\node at (0,1) [dot] {};
\node at (1,1) [dot] {};
\node at (2,1) [dot] {};
\node at (3,1) [dot] {};
\node at (4,1) [dot] {};
\node at (5,1) [dot] {};
\node at (3,0) [dot] {};
\node at (4,0) [dot] {};
\node at (5,0) [dot] {};
\node at (4,-1) [dot] {};
\node at (5,-1) [dot] {};
\node at (6,-1) [dot] {};
\node at (4,-2) [dot] {};
\node at (5,-2) [dot] {};
\node at (6,-2) [dot] {};
\node at (6,-3) [dot] {};
\node at (4,-3) [dot] {};
\node at (5,-3) [dot] {};
\node at (7,-2) [dot] {};
\node at (8,-2) [dot] {};
\node at (7,-3) [dot] {};
\node at (8,-3) [dot] {};
\node at (7,-4) [dot] {};
\node at (8,-4) [dot] {};
\node at (8,-5) [dot] {};
\node at (8,-6) [dot] {};
\node at (8,-7) [dot] {};

\node[left=0pt of current bounding box.west,anchor=east
    ]{$\F_2 =$};
\end{tikzpicture}
\end{center}

\noindent Hence, following~\eqref{Hilbert series Wallach} yields the Hilbert series below, which agrees with~\cite{EnrightHunziker04}*{Thm.~30}:
\[
P(L_{-kc\zeta}; t) = \frac{1+t+t^2}{(1-t)^{26}}.
\]

\newpage

\appendix

\section{Explicit maps in Howe duality settings}
\label{app:Howe}

\subsection{Howe duality for $(H, \g) = (\GL_k, \gl_{p+q})$}
\label{sub:appendix GL}

The action of $K = \GL_p\times \GL_q$ on $\C[W] = \C[\M_{p,k}\oplus\M_{k,q}]$ is given by 
\[
(g_1,g_2) \cdot f(Y,X)=\det(g_1)^{-k} f(g_1^{-1}Y, \: X g_2).
\]
The Lie algebra homomorphism $\omega: \mathfrak{gl}_{p+q} \longrightarrow \mathcal{D}(\M_{p,k}\oplus\M_{k,q})^{\GL_k}$ is given by
\begin{equation*}
\begin{split}
\begin{pmatrix}
A & B\\
C & D
\end{pmatrix}\longmapsto
&\ \sum_{i=1}^p\sum_{j=1}^p a_{ij} \left(-\sum_{\ell=1}^k y_{j \ell} \frac{\partial}{\partial y_{i\ell}}-k \partial_{ij}\right)
+\sum_{i=1}^q\sum_{j=1}^q d_{ij}\left(\sum_{\ell=1}^k x_{\ell i} \frac{\partial}{\partial x_{\ell j}}\right)\\
&\quad + \sum_{i=1}^p\sum_{j=1}^q b_{ij}\bigg( -\underbrace{\sum_{\ell=1}^k  \frac{\partial^2}{\partial y_{i\ell}\partial x_{\ell j}}}_{\Delta_{ij}}\bigg)
 +\sum_{i=1}^q\sum_{j=1}^p c_{ij} \bigg(\underbrace{\sum_{\ell=1}^k y_{j\ell} x_{\ell i}}_{f_{ji}}\bigg). 
\end{split}
\end{equation*}

\subsection{Howe duality for $(H,\g) = (\O_k, \sp_{2n})$}
\label{sub:appendix O}

The action of $K = \widetilde{\GL}_n$ on $\C[W] = \C[\M_{k,n}]$ is given by 
\[
(g,s) \cdot f(X)=s^{-k} f( X (g^{-1})^t),
\]
where $\widetilde{\GL}_n:=\{(g,s)\in \GL_n \times  \, \C^{\times} : \det(g) = s^2\}$
and hence $s^{-k}$ can be interpreted as ``{$\,\det(g)^{-k/2}$}.''
The Lie algebra homomorphism  $\omega: \mathfrak{sp}_{2n}\longrightarrow \mathcal{D}(\M_{k,n})^{\O_k}$ is given by
\begin{equation*}
\begin{split}
\begin{pmatrix}
A & B\\
C & -A^t
\end{pmatrix}\longmapsto
&\ \sum_{i=1}^n\sum_{j=1}^n a_{ij} \bigg(-\sum_{\ell=1}^k x_{\ell j} \frac{\partial}{\partial x_{\ell i}}-\frac{k}{2} \partial_{ij}\bigg)\\
&\quad +\sum_{i=1}^n\sum_{j=1}^n b_{ij}\bigg( -\frac{1}{4}\underbrace{\sum_{\ell=1}^k  \frac{\partial^2}{\partial x_{\ell i}\partial x_{\ell j}}}_{\Delta_{ij}}\bigg)
+\sum_{i=1}^n\sum_{j=1}^n c_{ij} \bigg(\underbrace{\sum_{\ell=1}^k  x_{\ell j} x_{\ell i}}_{f_{ji}}\bigg).  
\end{split}
\end{equation*}

\subsection{Howe duality for $(H,\g) = (\Sp_{2k}, \so_{2n})$}
\label{sub:appendix Sp}

The action of $K = \GL_n$ on $\C[W] = \C[\M_{2k,n}]$ is given by
\[
g \cdot f(X)=\det(g)^{-k} f(X (g^{-1})^t).
\]
The Lie algebra homomorphism  $\omega: \mathfrak{so}_{2n}\longrightarrow \mathcal{D}(\M_{2k,n})^{\Sp_{2k}}$ is given by
\begin{equation*}
\begin{split}
\begin{pmatrix}
A & B\\
C & -A^t
\end{pmatrix}\longmapsto
&\ \sum_{i=1}^n\sum_{j=1}^n a_{ij} \bigg(-\sum_{\ell=1}^{2k} x_{\ell j} \frac{\partial}{\partial x_{\ell i}}-k \partial_{ij}\bigg)\\
&\quad +\sum_{i=1}^n\sum_{j=1}^n b_{ij}\bigg(-\frac{1}{4}\underbrace{\sum_{\ell=1}^k \Big( \frac{\partial^2}{\partial x_{\ell i}\partial x_{\ell+k,j}}
- \frac{\partial^2}{\partial x_{\ell+k,i}\partial x_{\ell j}}\Big)}_{\Delta_{ij}}\bigg) \\[-10pt]
&\quad +\sum_{i=1}^n\sum_{j=1}^n c_{ij} \bigg(\underbrace{\sum_{\ell=1}^k  \Big(x_{\ell j} x_{\ell+k, i}-x_{\ell+k,j} x_{\ell i}\Big)}_{f_{ji}}\bigg).  
\end{split}
\end{equation*}

\subsection{Maps between coordinate functions}

In each of the three cases above, identify $\mathfrak{p}^-$ with $(\mathfrak{p}^+)^*$ via the $K$-equivariant linear isomorphism
\[
\begin{pmatrix}
0 & 0\\
C & 0
\end{pmatrix}
\mapsto 
\left[
\begin{pmatrix}
0 & B\\
0 & 0
\end{pmatrix}
\mapsto 
\operatorname{trace}(CB)
\right].
\]
Then define the matrix coordinate functions $z_{ij}\in (\mathfrak{p}^+)^*$  by
\[
z_{ij} :\begin{pmatrix}
0 & B\\
0 & 0
\end{pmatrix}\mapsto b_{ij}.
\]
Via the inverse isomorphism  $(\mathfrak{p}^+)^*\rightarrow \mathfrak{p}^-$,
we have
\[
z_{ij}\mapsto 
\begin{cases}
\begin{pmatrix}
0 & 0\\
E_{ji} & 0
\end{pmatrix}
 & \mbox{if $\mathfrak{g}=\mathfrak{gl}_{p+q}$ and $1\leq i\leq p$, $1\leq j\leq q$;}\\[20pt]
 \begin{pmatrix}
0 & 0\\
\frac{1}{2}(E_{ji}-E_{ij})  & 0
\end{pmatrix}&  \mbox{if $\mathfrak{g}=\mathfrak{so}_{2n}$ and $1\leq i<  j\leq n$;}\\[20pt]
\begin{pmatrix}
0 & 0\\
\frac{1}{2}(E_{ji}+E_{ij})  & 0
\end{pmatrix}
& \mbox{if $\mathfrak{g}=\mathfrak{sp}_{2n}$ and $1\leq i\leq  j\leq n$.}
\end{cases} 
\]
It follows that in all three Howe duality settings above, we have
\[
\pi^*: z_{ij} \longmapsto f_{ij}
\]
via $\mathbb{C}[z_{ij}]=\mathbb{C}[\mathfrak{p}^+]\cong \mathcal{U}(\mathfrak{p}^-)\subset \mathcal{U}(\mathfrak{g}) \rightarrow \mathcal{D}(W)^{H}$.

\longthanks{We thank Gerald Schwarz for his helpful comments that improved the introduction.
We are also grateful to both anonymous referees for their meticulous feedback and valuable suggestions.}

\bibliographystyle{amsplain-ac}
\bibliography{main}
\end{document}